\let\saved@bibitem\@bibitem\makeatother 
\let\@bibitem\saved@bibitem\makeatother 
\newtheorem{theorem}{Theorem}
\newtheorem{lemma}{Lemma}
\newtheorem{remark}{Remark}
\theoremstyle{definition}
\newtheorem{assume}{Assumption}
\newcommand{\func}[3]{\ensuremath{#1 : #2 \rightarrow #3}}
\newcommand{\norm}[1]{\ensuremath{\left\| #1 \right\|}}
\newcommand{\suchthat}{\mathrel{}\middle|\mathrel{}}
\newcommand{\optunc}[2]{\underset{#1}{\text{minimize}} ~~ #2}
\newcommand{\argoptunc}[2]{\underset{#1}{\arg\min} ~~ #2}
\newcommand{\optconOne}[3]{
\begin{aligned}
& \underset{#1}{\text{minimize}}
& & #2 \\
& \text{subject to} & & #3
\end{aligned}}
\newcommand{\pder}[2]{\ensuremath{\frac{\partial #1}{\partial #2}}}
\newcommand{\Ccal}{\ensuremath{\mathcal{C}}}
\newcommand{\Dcal}{\ensuremath{\mathcal{D}}}
\newcommand{\Ecal}{\ensuremath{\mathcal{E}}}
\newcommand{\Hcal}{\ensuremath{\mathcal{H}}}
\newcommand{\Lcal}{\ensuremath{\mathcal{L}}}
\newcommand{\Pcal}{\ensuremath{\mathcal{P}}}
\newcommand{\Rcal}{\ensuremath{\mathcal{R}}}
\newcommand{\Scal}{\ensuremath{\mathcal{S}}}
\newcommand{\Ucal}{\ensuremath{\mathcal{U}}}
\newcommand{\Vcal}{\ensuremath{\mathcal{V}}}
\newcommand{\Xcal}{\ensuremath{\mathcal{X}}}
\newcommand{\Ycal}{\ensuremath{\mathcal{Y}}}
\newcommand{\Zcal}{\ensuremath{\mathcal{Z}}}
\newcommand{\Vboldcal}{\ensuremath{\boldsymbol{\mathcal{V}}}}
\newcommand{\Rbb}{\ensuremath{\mathbb{R} }}
\newcommand\Abm{{\ensuremath{\bm{A}}}}
\newcommand\Dbm{{\ensuremath{\bm{D}}}}
\newcommand\Hbm{{\ensuremath{\bm{H}}}}
\newcommand\Ibm{{\ensuremath{\bm{I}}}}
\newcommand\Kbm{{\ensuremath{\bm{K}}}}
\newcommand\Mbm{{\ensuremath{\bm{M}}}}
\newcommand\Pbm{{\ensuremath{\bm{P}}}}
\newcommand\Qbm{{\ensuremath{\bm{Q}}}}
\newcommand\Rbm{{\ensuremath{\bm{R}}}}
\newcommand\Ubm{{\ensuremath{\bm{U}}}}
\newcommand\Vbm{{\ensuremath{\bm{V}}}}
\newcommand\bbm{{\ensuremath{\bm{b}}}}
\newcommand\gbm{{\ensuremath{\bm{g}}}}
\newcommand\rbm{{\ensuremath{\bm{r}}}}
\newcommand\ubm{{\ensuremath{\bm{u}}}}
\newcommand\vbm{{\ensuremath{\bm{v}}}}
\newcommand\wbm{{\ensuremath{\bm{w}}}}
\newcommand\xbm{{\ensuremath{\bm{x}}}}
\newcommand\ybm{{\ensuremath{\bm{y}}}}
\newcommand\zbm{{\ensuremath{\bm{z}}}}
\newcommand\lambdabold{{\ensuremath{\boldsymbol{\lambda}}}}
\newcommand\deltabold{{\ensuremath{\boldsymbol{\delta}}}}
\newcommand\mubold{{\ensuremath{\boldsymbol{\mu}}}}
\newcommand\rhobold{{\ensuremath{\boldsymbol{\rho}}}}
\newcommand\phibold{{\ensuremath{\boldsymbol{\phi}}}}
\newcommand\varpibold{{\ensuremath{\boldsymbol{\varpi}}}}
\newcommand\Phibold{{\ensuremath{\boldsymbol{\Phi}}}}
\newcommand\Psibold{{\ensuremath{\boldsymbol{\Psi}}}}
\newcommand\Xibold{{\ensuremath{\boldsymbol{\Xi}}}}
\newcommand\zerobold{\ensuremath{\mathbf{0}}}
\newcommand\onebold{\ensuremath{\mathbf{1}}}
\pgfplotsset{compat=1.9}
\pgfplotsset{select coords between index/.style 2 args={
    x filter/.code={
        \ifnum\coordindex<#1\fi
        \ifnum\coordindex>#2\fi
    }
}}
\tikzset{
 invisible/.style={opacity=0},
 visible on/.style={alt={#1{}{invisible}}},
 alt/.code args={<#1>#2#3}{%
   \alt<#1>{\pgfkeysalso{#2}}{\pgfkeysalso{#3}}
 },
}
\theoremstyle{definition}
\newtheorem{corollary}{Corollary}[theorem]
\newcommand{\paren}[1]{\ensuremath{\left( #1 \right)}}
\newcommand{\bracket}[1]{\ensuremath{\left[ #1 \right]}}
\newcommand{\curlyb}[1]{\ensuremath{\left\{ #1 \right\} }}
\newcommand{\sens}[1]{\ensuremath{\pder{#1}{\mubold}}}
\newcommand{\abs}[1]{\ensuremath{\left| #1 \right|}}
\newcommand{\trbm}{\Tilde{\rbm}}
\newcommand{\tybm}{\Tilde{\ybm}}
\newcommand{\tlam}{\Tilde{\lambdabold}}
\newcommand{\hrbm}{\hat{\rbm}}
\newcommand{\hybm}{\hat{\ybm}}
\newcommand{\hlam}{\hat{\lambdabold}}
\newcommand{\hj}{\hat{j}}
\newcommand{\tf}{\Tilde{f}}
\newcommand{\tj}{\Tilde{j}}
\newcommand{\rlam}{\rbm^\lambda}
\newcommand{\glam}{\gbm^\lambda}
\newcommand{\hrlam}{\hat{\rbm}^\lambda}
\newcommand{\hglam}{\hat{\gbm}^\lambda}
\newcommand{\trlam}{\tilde{\rbm}^\lambda}
\newcommand{\tglam}{\tilde{\gbm}^\lambda}
\begin{document}
\title{A globally convergent method to accelerate large-scale optimization using on-the-fly model hyperreduction: application to shape optimization}

\author[rvt1]{Tianshu Wen\fnref{fn1}}
\ead{twen2@nd.edu}

\author[rvt1]{Matthew J. Zahr\fnref{fn2}\corref{cor1}}
\ead{mzahr@nd.edu}

\address[rvt1]{Department of Aerospace and Mechanical Engineering, University
               of Notre Dame, Notre Dame, IN 46556, United States}
\cortext[cor1]{Corresponding author}

\fntext[fn1]{Graduate Student, Department of Aerospace and Mechanical
             Engineering, University of Notre Dame}
\fntext[fn2]{Assistant Professor, Department of Aerospace and Mechanical
             Engineering, University of Notre Dame}

\begin{keyword} 
PDE-constrained optimization, reduced-order model, hyperreduction, shape optimization, reduced mesh motion, trust-region method, on-the-fly sampling
\end{keyword}

\begin{abstract}
    We present a numerical method to efficiently solve optimization problems governed by large-scale nonlinear systems of
    equations, including discretized partial differential equations, using projection-based reduced-order models accelerated
    with hyperreduction (empirical quadrature) and embedded in a trust-region framework that guarantees global convergence.
    The proposed framework constructs a hyperreduced model \textit{on-the-fly} during the solution of the optimization problem,
    which completely avoids an offline training phase. This ensures all snapshot information is collected along the optimization
    trajectory, which avoids wasting samples in remote regions of the parameters space that are never visited, and inherently avoids
    the curse of dimensionality of sampling in a high-dimensional parameter space. At each iteration of the proposed algorithm, a
    reduced basis and empirical quadrature weights are constructed precisely to ensure the global convergence criteria of the
    trust-region method are satisfied, ensuring global convergence to a local minimum of the original (unreduced) problem.
    Numerical experiments are performed on two fluid shape optimization problems to verify the global convergence of the method
    and demonstrate its computational efficiency; speedups over $18\times$ (accounting for all computational cost, even cost that is
    traditionally considered ``offline'' such as snapshot collection and data compression) relative to standard optimization
    approaches that do not leverage model reduction are shown.
\end{abstract}
    
\maketitle

\section{Introduction}
\label{sec:intro}
Optimization problems, particularly those constrained by partial differential equations (PDEs), arise in almost every branch of engineering and science. However, they can be computationally expensive to solve because they require numerous
queries to the underlying model simulation, which can be demanding in many applications, particularly those involving fluid
flows. To circumvent the computational cost of optimization problems constrained by PDE simulations, a slew of surrogate-based
approaches have been developed whereby the expensive simulation is replaced with an inexpensive approximation model.
The work by Alexandrov \cite{alexandrov_trust-region_1998, alexandrov_approximation_2001} provides a practical and rigorous framework for leveraging surrogate models to accelerate optimization problems to ensures convergence to a local optimum.
Surrogate models used to accelerate optimization problems come in many forms, e.g.,  adaptive spatial discretizations
\cite{ziems_adaptive_2011}, partially converged solutions \cite{forrester_optimization_2006,zahr_adaptive_2016},
response surfaces \cite{forrester_recent_2009}, projection-based reduced-order models (ROM) \cite{arian_trust-region_2000, qian_certified_2017, zahr_progressive_2015,zahr_efficient_2019, yano_globally_2021, heinkenschloss_reduced_2022}, and deep learning \cite{tao_application_2019, lye_iterative_2021},
to name a few. In this work, we focus solely on projection-based model reduction surrogates.

For nonlinear problems, it is well-known that projection to a lower dimensional space is not sufficient to
realize computational efficiency because there is no opportunity to precompute the expensive projection
or assembly operations. To circumvent this bottleneck, numerous hyperreduction techniques have been developed
\cite{barrault_empirical_2004, chaturantabut_discrete_2009, farhat_dimensional_2014, yano_discontinuous_2019}
to specifically address the cost of evaluating nonlinear terms, usually through empirical interpolation or empirical
quadrature. While many of these methods have proven successful, we focus on the empirical quadrature procedure (EQP)
because it has proven effective for finite-element-based methods and provides direct control of
the output error \cite{yano_discontinuous_2019}. This method restores computational efficiency by only requiring assembly
over a small portion of the mesh, determined by a highly sparse vector of element weights computed as the
solution of an optimization problem. In this work, we aim to leverage hyperreduced models to accelerate optimization problems.

There are two primary approaches to accelerate optimization problems using ROMs:
(i) offline/online approaches and (ii) on-the-fly approaches.
In the offline/online approaches \cite{legresley_airfoil_2000, manzoni_shape_2012, ripepi_reduced-order_2018},
a ROM is constructed in an offline phase by sampling the parameter space, e.g., using a greedy method, and in
the online phase it is used to solve one or more optimization problems.
This approach tends to be the most popular as it allows much of the existing projection-based
model reduction infrastructure to be re-used, and as such they have been used in numerous settings
including shape optimization \cite{legresley_airfoil_2000,manzoni_shape_2012},
reservoir optimization \cite{trehan_trajectory_2016}, and data assimilation \cite{stefanescu_poddeim_2015}, to name a few.
Several hyperreduction approaches have been embedded in this offline/online framework to
accelerate optimization problems with nonlinear PDE constraints, including the Discrete Empirical
Interpolation Method (DEIM ) \cite{amsallem_design_2015,stefanescu_poddeim_2015,yang_efficient_2017,esmaeili_generalized_2020,choi_gradient-based_2020},
piecewise quadratic approximations \cite{trehan_trajectory_2016}, the Energy Conserving Sampling and
Weighting (ECSW) method \cite{scheffold_vibration_2018}, and Koopman-based methods \cite{renganathan_koopman-based_2020}.
However, because a single ROM is built in the offline phase that will be used throughout the optimization trajectory, it
must be accurate in all regions of the parameter space the optimization solver may visit. This requires training
in a potentially high-dimensional parameter space, which suffers from the curse of dimensionality and will likely
lead to unnecessary samples if certain regions of the parameter space are not visited during the optimization iterations.
There has been promising work that aims to overcome the curse of dimensionality in training these models using
active manifolds \cite{boncoraglio_active_2021}. For special classes of optimization problems, these offline/online
methods can be shown to deliver accurate approximations to the optimal solution \cite{dihlmann_certified_2015},
although there are no guarantees for general problems.

On the other hand, on-the-fly approaches \cite{arian_trust-region_2000, yue_accelerating_2013,agarwal_trust-region_2013,zahr_progressive_2015, zahr_adaptive_2016, qian_certified_2017, zahr_efficient_2019, yano_globally_2021,keil_non-conforming_2021,banholzer_trust_2022} have no training phases because they build up a ROM during the optimization procedure.
They often yield more modest speedups because they account for all cost, including the cost of
training simulations and basis construction, but can ensure convergence to a critical point of the
original (unreduced) optimization problem \cite{zahr_adaptive_2016,qian_certified_2017,yano_globally_2021,banholzer_trust_2022}.
These methods are becoming increasingly popular as they have now been developed for optimization under uncertainty \cite{zahr_efficient_2019}, topology optimization \cite{yano_globally_2021}, multiobjective optimization \cite{banholzer_trust_2022}, oil reservoir optimization \cite{suwartadi_adjoint-based_2015}, shape optimization \cite{zahr_progressive_2015,zahr_adaptive_2016,marques_non-intrusive_2021}, multiscale parameter estimation \cite{keil_relaxed_2022}, and general linear PDE-constrained optimization \cite{yue_accelerating_2013,qian_certified_2017}.
Most of these methods either address linear problems \cite{yue_accelerating_2013,qian_certified_2017,yano_globally_2021}
or do not incorporate hyperreduction \cite{zahr_progressive_2015,zahr_efficient_2019}. The work that does directly address
nonlinear problems and incorporate model hyperreduction \cite{suwartadi_adjoint-based_2015,sorek_model_2017,marques_non-intrusive_2021} are not currently equipped with global convergence theory.


The main contribution of this work is a new surrogate-based optimization method that embeds projection-based hyperreduced
(EQP) models in a globally convergent trust-region method that allows for models with inexact gradients and asymptotic error bounds \cite{kouri_trust-region_2013}. Our method constructs a hyperreduced model \textit{on-the-fly} at each trust-region center and uses
it as the trust-region approximation model, which can be rapidly queried many times to solve the trust-region subproblem and
advance toward the optimal solution. This completely avoids an offline training phase and ensures all snapshot information is
collected along the optimization trajectory as opposed to remote regions of the parameter space that are never visited. An on-the-fly
approach also circumvents the curse of dimensionality that comes from trying to sample a high-dimensional parameter space
(we consider problems with up to 18 shape parameters in this work).

In the proposed approach, at each trust-region center, a reduced basis and empirical quadrature weights are constructed to ensure
global convergence of the method. To accomplish this, the EQP method in \cite{yano_discontinuous_2019, yano_lp_2019}
was enhanced with optimization-based constraints, e.g., on the adjoint residual, quantity of interest, and gradient reconstruction,
and the tolerance associated with each constraint is automatically chosen based on the trust-region convergence criteria.
Global convergence to a local solution of the original (unreduced) optimization problem is guaranteed under regularity and
boundedness assumptions on various system operators and quantities
of interest. Lastly, because our primary interest is shape optimization, we introduce an approach for efficient physics-based
mesh motion (e.g., linear elasticity or spring analogy) in the model reduction setting as an alternative to volumetric shape
parametrization \cite{manzoni_shape_2012}. Global convergence of the method is verified for two shape optimization problems, and
speedup up over $18\times$ is demonstrated relative to standard optimization approaches when accounting for all sources of computational
cost. The four main contributions of this work are summarized as:
\begin{enumerate}[label=(\roman*)]
 \item an empirical quadrature procedure with additional constraints tailored to the optimization setting,
 \item an approach to accelerate physics-based mesh motion (e.g., linear elasticity, spring systems) using linear model reduction,
 \item a trust-region method with hyperreduced approximation models (EQP/TR), and
 \item global convergence theory for the proposed EQP/TR method.
\end{enumerate}

The remainder of this paper is organized as follows. Section \ref{sec:pdeopt} introduces the governing nonlinear system,
quantity of interest, an associated optimization problem, and their specialization to the case of PDE-based shape optimization.
Section~\ref{sec:hyperreduction} introduces projection-based model reduction and hyperreduction based on empirical
quadrature, including the proposed optimization-based EQP constraints and relevant error estimates. Section~\ref{sec:trammo}
provides a brief overview of the trust-region method of \cite{kouri_trust-region_2013} that uses inexact gradient evaluations and asymptotic error
bounds, and subsequently introduces the proposed EQP/TR method and establishes its global convergence.
Finally, Section \ref{sec:numexp} presents verifies global convergence of the method and demonstrates its computational efficiency.

\section{Problem formulation}
\label{sec:pdeopt}
In this section, we formulate the governing high-dimensional optimization problem that we aim to accelerate using model hyperreduction in later sections. We consider general optimization problems constrained by a large-scale nonlinear system of equations, although our primary interest is
fully discrete PDE-constrained optimization. To this end, we introduce the governing equations (Section \ref{sec:pdeopt:gov_eqn}), quantities of interest that will be the optimization objective (Section \ref{sec:pdeopt:qoi}), the complete optimization formulation including the adjoint method to compute gradients (Section \ref{sec:pdeopt:disc_pdeopt}), and the specialization of the framework to shape optimization (Section \ref{sec:pdeopt:shape_opt}).

\subsection{Governing equations}
\label{sec:pdeopt:gov_eqn}
We consider a parametrized, large-scale system of nonlinear equations that we will refer to as the high-dimensional model (HDM):
given a collection of system parameters
$\mubold\in\Dcal\subset\Rbb^{N_\mubold}$, find $\ubm^\star\in\Rbb^{N_\ubm}$ such that
\begin{equation}\label{eqn:hdm:res_eqn}
    \rbm(\ubm^\star, \mubold) = \zerobold,
\end{equation}
where $\ubm^\star$ is the primal solution implicitly defined as the solution of (\ref{eqn:hdm:res_eqn}) and  $\rbm: \Rbb^{N_\ubm}\times\Dcal \mapsto \Rbb^{N_\ubm}$ with $\rbm : (\ubm, \mubold) \mapsto \rbm(\ubm,\mubold)$ denotes the residual. We assume that for every $\mubold\in\Dcal$, there exists a unique primal solution $\ubm^\star=\ubm^\star(\mubold)$ satisfying (\ref{eqn:hdm:res_eqn}) and that the implicit map $\mubold \mapsto \ubm^\star(\mubold)$ is continuously differentiable. In most practical applications, the dimension $N_\ubm$ is large, which causes (\ref{eqn:hdm:res_eqn}) to be computationally expensive to solve. We focus on the case where the residual arises from the high-fidelity discretization of a parametrized, partial differential equation (PDE), although the developments in this work generalize to naturally discrete systems. Furthermore, we assume the system ($\Omega$) is comprised on $N_\mathtt{e}$ elements ($\Omega_e$), i.e., $\Omega=\cup_{e=1}^{N_\mathtt{e}} \Omega_e$, and the residual can be written as an assembly of element residuals
\begin{equation}\label{eqn:hdm:res_elem}
    \rbm(\ubm, \mubold)  
    = \sum_{e=1}^{N_\mathtt{e}} \Pbm_e \rbm_e \paren{\ubm_e, \ubm_e', \mubold},
\end{equation}
where $\rbm_e: \Rbb^{N_\ubm^e}\times\Rbb^{N_\ubm^{e'}}\times\Dcal \mapsto \Rbb^{N_\ubm^e}$ with
$\rbm_e : (\ubm_e, \ubm_e', \mubold) \mapsto \rbm_e(\ubm_e,\ubm_e',\mubold)$
is the residual contribution of element $e$, $\ubm_e\in\Rbb^{N_\ubm^e}$ are the DoFs associated with element
$e$, $\ubm_e'\in\Rbb^{N_\ubm^{e'}}$ are the DoFs associated with elements neighboring element $e$ (if applicable),
and $\Pbm_e \in \Rbb^{N_\ubm\times N_\ubm^e}$ is the assembly operator that maps element DoFs for
element $e$ to global DoFs. Furthermore, the element and global DoFs are related via assembly operators
$\ubm_e = \Pbm_e^T \ubm$ and $\ubm_e' = (\Pbm_e')^T\ubm$, where $\Pbm_e' \in \Rbb^{N_\ubm\times N_\ubm^{e'}}$
is an assembly operator that maps element DoFs from neighbors of element $e$ to the corresponding
global DoFs. This structure will be used to facilitate hyperreduction of the nonlinear system of equations.

\begin{remark}
\label{rem:govern}
In the PDE setting, we consider two concrete examples of the framework introduced in this section. Because any of the
quantities at the continuous level can depend on the parameter vector $\mubold$, e.g., the flux function, source term,
or PDE domain, we only include the parameter vector once the continuous terms are converted to their discrete,
algebraic forms for brevity.
\begin{itemize}
 \item Consider the system of $m$ static inviscid
conservation laws on a domain $\Omega \subset \Rbb^d$ subject to appropriate boundary conditions
    \begin{equation}\label{eqn:hdm:static_claw}
        \nabla \cdot F(u) = S(u) \quad \text{in} \quad \Omega
    \end{equation}
where $u: \Omega \rightarrow \Rbb^m$ is the solution of the conservation laws, $F: \Rbb^m \rightarrow \Rbb^{m \times d}$ is the flux function, $S:\Rbb^m\rightarrow\Rbb^m$ is the source term, $\nabla$ is the gradient operator, and $\partial \Omega$ is the boundary of the domain with outward unit normal $n: \partial \Omega \rightarrow \Rbb^d$. Let $\Ecal_h$ be a mesh of $\Omega$, i.e., a collection of non-overlapping, potentially curved elements that cover $\Omega$ such that $K\subset\Omega$ for any $K\in\Ecal_h$, where $|\Ecal_h| = N_\mathtt{e}$. Furthermore, we assume
the elements are ordered, i.e., $\Ecal_h = \{\Omega_1,\Omega_2,\dots,\Omega_{N_\mathtt{e}}\}$. Then, we apply a discontinuous Galerkin
discretization to (\ref{eqn:hdm:static_claw}) to yield the Galerkin form: find $u_h \in \Vcal_h$ such that
\begin{equation}\label{eqn:hdm:disc_dg}
    r_e : (\psi_h^e,u_h) \mapsto \int_{\partial \Omega_e} \psi_h^e \cdot \Hcal(u_h^+, u_h^-, n_h) \, dS -\int_{\Omega_e} F(u_h): \nabla\psi_h^e \, dV - \int_{\Omega_e}  \psi_h^e \cdot S(u_h) \, dV = 0
\end{equation}
holds for all $\psi_h^e \in \Vcal_h^e$ and $e = 1, \dots, N_\mathtt{e}$, where $\Vcal_h^e \subset \Pcal^p(\Omega_e)$ is a polynomial
space over $\Omega_e$ of degree $p$, $\Vcal_h = \bigoplus_{e=1}^{N_\mathtt{e}} \Vcal_h^e \subset L^2(\Omega)$ is the global
trial/test Hilbert space (piecewise polynomial),  $\func{\Hcal}{\Rbb^m\times\Rbb^m\times\Rbb^d}{\Rbb^m}$ is the numerical flux function,
and $w_h^+$ ($w_h^-$) denotes the interior (exterior) trace of $w_h\in\Vcal_h$ to element $\Omega_e$. The global Galerkin form is
defined by summing all element residuals
\begin{equation}\label{eqn:hdm:dg_res_elem}
 r : (\psi_h, u_h) \mapsto \sum_{e=1}^{N_\mathtt{e}} r_e(\left.\psi_h\right|_{\Omega_e}, u_h) = 0,
\end{equation}
for all $\psi_h \in \Vcal_h$. Next, we select a basis for
the local function space ($\Vcal_h^e$) to reduce the element Galerkin form ($r_e$) to algebraic form
$\rbm_e(\ubm_e, \ubm_e', \mubold)$, where $\ubm_e\in\Rbb^{N_\ubm^e}$ are the DoFs of $u_h$ associated
with element $\Omega_e$ and $\ubm_e'\in\Rbb^{N_\ubm^{e'}}$ are the DoFs of $u_h$ associated
to all elements of $\Ecal_h$ that share a face with $\Omega_e$. In the DG setting, the global solution vector $\ubm$
(DoFs of $u_h$) and residual function $\rbm$ (algebraic form of $r$) are simply the concatenation of all
element contributions, i.e.,
\begin{equation}
 \ubm = (\ubm_1, \ubm_2, \dots, \ubm_{N_\mathtt{e}}), \qquad
 \rbm(\ubm,\mubold) = (\rbm_1(\ubm_1,\ubm_1',\mubold), \rbm_2(\ubm_2,\ubm_2',\mubold), \dots, \rbm_{N_\mathtt{e}}(\ubm_{N_\mathtt{e}},\ubm_{N_\mathtt{e}}', \mubold)).
\end{equation}
  \item Next, we consider a system of $m$ static viscous conservation laws on a domain $\Omega \subset \Rbb^d$ subject to
   appropriate boundary conditions
    \begin{equation}\label{eqn:hdm:static_vclaw}
        \nabla \cdot F(u,\nabla u) = S(u, \nabla u) \quad \text{in} \quad \Omega,
    \end{equation}
    where all terms are identical to those in (\ref{eqn:hdm:static_claw}) except the flux function
    $F: \Rbb^m\times\Rbb^{m\times d} \rightarrow \Rbb^{m \times d}$  and source term $S:\Rbb^m\times\Rbb^{m\times d} \rightarrow \Rbb^m$
    also depends on the gradient $\nabla u$. Using the mesh described above with ordered elements, we apply a continuous Galerkin
    discretization to yield the Galerkin form: find $u_h \in \Vcal_h$ such that
 \begin{equation}\label{eqn:hdm:disc_cg}
    r : (\psi_h, u_h) \mapsto
    \int_{\partial \Omega} \psi_h \cdot F_\partial(u_h,\nabla u_h, n_h) \, dS -\int_\Omega F(u_h, \nabla u_h): \nabla\psi_h \, dV - \int_\Omega \psi_h \cdot S(u_h,\nabla u_h) \, dV= 0
\end{equation}
 holds for all $\psi_h \in \Vcal_h$, where $\Vcal_h\subset H^1(\Omega)$ is the trial Hilbert space (continuous, piecewise polynomial) and
 $\func{F_\partial}{\Rbb^m\times\Rbb^{m\times d}\times\Rbb^d}{\Rbb^m}$ is the boundary condition flux. The residual ($r$) in
 (\ref{eqn:hdm:disc_cg}) can be split into element contributions as
  \begin{equation}\label{pde:eqn:disc_cg_elem}
    r(\psi_h, u_h) = \sum_{e=1}^{N_\mathtt{e}} r_e(\psi_h, u_h),
\end{equation}
where the element contributions are defined by leveraging the additive property of integration
 \begin{equation}\label{eqn:hdm:elem_res}
    r_e : (\psi_h, u_h) \mapsto
    \int_{\partial \Omega_e \cap \partial\Omega} \psi_h \cdot F_\partial(u_h,\nabla u_h, n_h) \, dS -\int_{\Omega_e} F(u_h, \nabla u_h): \nabla\psi_h \, dV -\int_{\Omega_e} \psi_h \cdot S(u_h, \nabla u_h) \, dV.
\end{equation}
Next, we select a basis for the Hilbert space ($\Vcal_h$) to reduce the global residual ($r$) in (\ref{eqn:hdm:disc_cg}) to an
algebraic system of nonlinear equations $\rbm(\ubm,\mubold) = \zerobold$, where $\ubm\in\Rbb^{N_\ubm}$ are the global DoFs of $u_h$.
Similarly, the element residual ($r_e$) in (\ref{eqn:hdm:elem_res}) reduces to an algebraic function $\rbm_e(\ubm_e,\mubold)$,
where $\ubm_e\in\Rbb^{N_\ubm^e}$ are the
DoFs of $u_h$ associated with element $\Omega_e$. Unlike the DG setting, the element residual solely depends on its
own DoFs, i.e., $\ubm_e' = \emptyset$, because DoFs at the intersection of elements are shared. In this
setting, assembling the local DoFs or residuals into global vectors is a true assembly operation.
\end{itemize}
There are many other examples of PDE discretizations that fit the setting outlined in this section, including cell-centered finite volume and
other finite-element-based discretizations, as well as naturally discrete systems (e.g., direct stiffness analysis of trusses).
\end{remark}

\subsection{Quantities of interest}
\label{sec:pdeopt:qoi}
Next, we introduce a quantity of interest (QoI), $\func{j}{\Rbb^{N_\ubm}\times\Dcal}{\Rbb}$, that maps the state
vector $\ubm$ and parameter configuration $\mubold$ into a scalar as $j : (\ubm,\mubold) \rightarrow j(\ubm,\mubold)$. Similar
to the residual, we assume the QoI can be written as a summation of element contributions
\begin{equation}\label{eqn:hdm:qoi_elem}
    j(\ubm, \mubold) = \sum^{N_\mathtt{e}}_{e} j_e(\ubm_e, \mubold),
\end{equation}
where $j_e : (\ubm_e,\mubold) \rightarrow j_e(\ubm_e,\mubold)$ with $\func{j_e}{\Rbb^{N_\ubm^\mathtt{e}}\times\Dcal}{\Rbb}$
is the element contribution of element $e$ to the QoI.

\begin{remark}
\label{rem:qoi}
In the PDE setting, quantities of interest are usually integrated quantities over the volume or boundary. For generality, we consider a single integrated quantity with a volume and boundary term
\begin{equation}
 q : u \mapsto \int_\Omega q_\mathtt{v}(u,\nabla u) \, dV + \int_{\partial\Omega} q_\mathtt{b}(u,\nabla u, n) \, dS,
\end{equation}
where $q_\mathtt{v}$ is the volumetric integrand and $q_\mathtt{b}$ is the boundary integrand. For consistency with Remark~\ref{rem:govern},
we omit the dependence on the parameters $\mubold$ at the continuous level.
Assuming the PDE discretization is a finite element method, it is natural to use the same approximations for the quantity of
interest for a consistent discretization \cite{zahr_implicit_2020}, which leads to the discretized quantity of interest $q_h : \Vcal_h \mapsto \Rbb$ with
\begin{equation}
 q_h : u_h \mapsto \int_\Omega q_\mathtt{v}(u_h,\nabla u_h) \, dV + \int_{\partial\Omega} q_\mathtt{b}(u_h,\nabla u_h, n_h) \, dS.
\end{equation}
From this, we define the element contribution to $q_h$ as $q_h^e :\Vcal_h^e \mapsto \Rbb$ with
\begin{equation}
 q_h^e : u_h \mapsto \int_{\Omega_e} q_\mathtt{v}(u_h,\nabla u_h) \, dV + \int_{\partial\Omega_e\cap\partial\Omega} q_\mathtt{b}(u_h,\nabla u_h, n_h) \, dS,
\end{equation}
which relates to the complete quantity of interest as
\begin{equation}
 q_h = \sum_{e=1}^{N_\mathtt{e}} q_h^e.
\end{equation}
By introducing a basis for the local function space $\Vcal_h^e$ as in Remark~\ref{rem:govern}, the continuous QoI ($q_h^e$) becomes an
algebraic expression $j_e(\ubm_e,\mubold)$, where $\ubm_e\in\Rbb^{N_\ubm^e}$ are the DoFs of $u_h$ associated
with element $\Omega_e$. The element contributions ($j_e$) are summed over all elements in the mesh to yield the complete QoI
($j$) in (\ref{eqn:hdm:qoi_elem}).
\end{remark}

\subsection{Optimization formulation}
\label{sec:pdeopt:disc_pdeopt}
Finally, we pose the central constrained optimization problem considered in this work
\begin{equation}\label{eqn:hdm:opt:full_space}
    \optconOne{\ubm \in \Rbb^{N_\ubm}, \mubold \in \Dcal}{j(\ubm, \mubold)}{\rbm(\ubm, \mubold)=\zerobold.}
\end{equation}
In the setting where the residual corresponds to a PDE discretization, this is a \textit{discrete PDE-constrained optimization problem} (discretize-then-optimize). We reduce this to an unconstrained problem by restricting the QoI to the solution manifold, i.e., define $f : \Dcal \rightarrow \Rbb$ such that
\begin{equation}
 f : \mubold \mapsto j(\ubm^\star(\mubold), \mubold),
\end{equation}
which leads to the following reduced space optimization problem
\begin{equation}\label{eqn:hdm:opt:reduc_space}
    \optunc{\mubold \in \Dcal}{f(\mubold)}.
\end{equation}
Because the solution of (\ref{eqn:hdm:opt:reduc_space}) requires potentially many queries to the large-scale HDM, it can be prohibitively expensive
to compute. The main contribution of this work is a procedure to accelerate this using model hyperreduction (Section~\ref{sec:hyperreduction}-\ref{sec:trammo}).

We use the adjoint method to compute the gradient of the QoI because the parameter-space dimension $N_\mubold$ is potentially large.
The HDM adjoint problem is: given $\mubold\in\Dcal$ and the corresponding primal solution $\ubm^\star\in\Rbb^{N_\ubm}$
satisfying (\ref{eqn:hdm:res_eqn}), find the adjoint solution $\lambdabold^\star\in\Rbb^{N_\ubm}$ satisfying
\begin{equation} \label{pde:eqn:res_sys_adj}
 \rlam(\lambdabold^\star, \ubm^\star, \mubold) = \zerobold,
\end{equation}
where the adjoint residual, $\rlam : \Rbb^{N_\ubm}\times\Rbb^{N_\ubm}\times \Dcal \rightarrow \Rbb^{N_\ubm}$, is defined as
\begin{equation} \label{eqn:hdm:res_adj}
 \rlam : (\lambdabold, \ubm, \mubold) \mapsto
 \pder{\rbm}{\ubm}(\ubm,\mubold)^T\lambdabold - \pder{j}{\ubm}(\ubm,\mubold)^T.
\end{equation}
Assuming the residual Jacobian is well-defined and invertible for every $(\ubm^\star(\mubold),\mubold)$ pair with $\mubold\in\Dcal$,
then there exists a unique adjoint solution
\begin{equation}
 \lambdabold^\star(\mubold) = \pder{\rbm}{\ubm}(\ubm^\star(\mubold), \mubold)^{-T}\pder{j}{\ubm}(\ubm^\star(\mubold),\mubold)^T.
\end{equation}
making the implicit map $\mubold\mapsto\lambdabold^\star(\mubold)$ well-defined for all $\mubold\in\Dcal$.
From the primal-adjoint pair $(\ubm^\star,\lambdabold^\star)$ satisfying (\ref{eqn:hdm:res_eqn}) and (\ref{pde:eqn:res_sys_adj}),
the gradient of the reduced QoI ($f$), $\func{\nabla f}{\Dcal}{\Rbb^{N_\mubold}}$, is computed as
\begin{equation}
 \nabla f : \mubold \mapsto \glam(\lambdabold^\star(\mubold), \ubm^\star(\mubold), \mubold)^T = \sens{j}(\ubm^\star(\mubold), \mubold)^T -  \sens{\rbm}(\ubm^\star(\mubold), \mubold)^T\lambdabold^\star(\mubold),
\end{equation}
where the operator that reconstructs the gradient from the adjoint solution,
$\glam : \Rbb^{N_\ubm}\times\Rbb^{N_\ubm}\times \Dcal \rightarrow \Rbb^{1\times N_\mubold}$, is
\begin{equation}
 \glam : (\lambdabold, \ubm, \mubold) \mapsto \sens{j}(\ubm, \mubold) - \lambdabold^T \sens{\rbm}(\ubm, \mubold).
\end{equation}
From the unassembled form of both the residual function (\ref{eqn:hdm:res_elem}) and QoI (\ref{eqn:hdm:qoi_elem}), the adjoint residual and QoI gradient function can be written in unassembled form as
\begin{equation} \label{eqn:hdm:res_qoi_adj_elem}
\begin{aligned}
\glam(\lambdabold, \ubm, \mubold) &= \sum^{N_\mathtt{e}}_{e=1} \bracket{
        \pder{j_e}{\mubold}(\ubm_e, \mubold) - \lambdabold_e^T \pder{\rbm_e}{\mubold}(\ubm_e, \ubm_e', \mubold)} \\
\rlam(\lambdabold, \ubm, \mubold)  &= \sum^{N_\mathtt{e}}_{e=1} \bracket{
        -\Pbm_e\pder{j_e}{\ubm_e}(\ubm_e,\mubold)^T
           + \Pbm_e\pder{\rbm_e}{\ubm_e}(\ubm_e, \ubm_e', \mubold)^T \lambdabold_e
            + \Pbm_e'\pder{\rbm_e}{\ubm_e'}(\ubm_e, \ubm_e',\mubold)^T \lambdabold_e
        },
\end{aligned}
\end{equation}
where $\lambdabold_e = \Pbm_e^T\lambdabold$.
The unassembled form will be used to construct optimization-informed hyperreduced models in the next section.

We close this section by introducing sensitivity analysis terms that mirror the adjoint terms above.
The HDM sensitivity problem is: given $\mubold\in\Dcal$ and the corresponding primal solution $\ubm^\star\in\Rbb^{N_\ubm}$
satisfying (\ref{eqn:hdm:res_eqn}), find the sensitivity solution $\partial_\mubold\ubm^\star\in\Rbb^{N_\ubm\times N_\mubold}$ satisfying
\begin{equation} \label{pde:eqn:res_sys_sens}
 \rbm^\partial(\partial_\mubold\ubm^\star, \ubm^\star, \mubold) = \zerobold,
\end{equation}
where the sensitivity residual, $ \rbm^\partial : \Rbb^{N_\ubm\times N_\mubold}\times\Rbb^{N_\ubm}\times \Dcal \rightarrow \Rbb^{N_\ubm\times N_\mubold}$, is defined as
\begin{equation} \label{pde:eqn:res_sens}
 \rbm^\partial : (\wbm, \ubm, \mubold) \mapsto
 \pder{\rbm}{\ubm}(\ubm,\mubold)\wbm + \pder{\rbm}{\mubold}(\ubm,\mubold).
\end{equation}
Assuming the residual Jacobian is well-defined and invertible for every $(\ubm^\star(\mubold),\mubold)$ pair with $\mubold\in\Dcal$,
then there exists a unique sensitivity solution
\begin{equation}
 \partial_\mubold\ubm^\star(\mubold) = -\pder{\rbm}{\ubm}(\ubm^\star(\mubold), \mubold)^{-1}\pder{\rbm}{\mubold}(\ubm^\star(\mubold),\mubold).
\end{equation}
making the implicit map $\mubold\mapsto\partial_\mubold\ubm^\star(\mubold)$ well-defined for all $\mubold\in\Dcal$.
From the unassembled form of the residual function (\ref{eqn:hdm:res_elem}), the sensitivity residual can be written in unassembled
form as
\begin{equation} \label{eqn:hdm:res_sens_elem}
\rbm^\partial(\wbm, \ubm, \mubold)  = \sum^{N_\mathtt{e}}_{e=1} \bracket{
       \Pbm_e\pder{\rbm_e}{\ubm_e}(\ubm_e, \ubm_e', \mubold) \wbm_e +
       \Pbm_e\pder{\rbm_e}{\ubm_e'}(\ubm_e, \ubm_e', \mubold) \wbm_e' +
       \Pbm_e\pder{\rbm_e}{\mubold}(\ubm_e, \ubm_e', \mubold),
        },
\end{equation}
where $\wbm_e = \Pbm_e^T\wbm$ and $\wbm_e' = (\Pbm_e')^T\wbm$.

\begin{remark}
There are two reasons sensitivity analysis is included in this work. First, the sensitivity method can be used in the place of the
adjoint method in the proposed framework to compute gradients if appropriate for a given application (i.e., few parameters). The
use of sensitivity information to construct reduced-order models has been shown to lead to more robust, predictive ROMs \cite{zahr_adaptive_2016},
although this approach is not practical when there are many parameters. Secondly, our numerical experiments (Section~\ref{sec:numexp})
found it is beneficial to include reduced sensitivity constraints in the EQP training when the adjoint method is used to compute
gradients.
\end{remark}

\subsection{Application: shape optimization}
\label{sec:pdeopt:shape_opt}
To close this section, we specialize the developments thus far to shape optimization as this will be an emphasis
of the numerical experiments (Section~\ref{sec:numexp}). Recall the PDE in (\ref{eqn:hdm:static_claw}) and
(\ref{eqn:hdm:static_vclaw}), and observe that in the discrete setting, the PDE domain $\Omega$ is approximated
using the computational mesh $\Ecal_h$, which we denote
$\Omega_h \coloneqq \cup_{e=1}^{N_\mathtt{e}} \Omega_e \approx \Omega$. Let $\xbm\in\Rbb^{N_x}$ denote the
coordinates of the nodes associated with the mesh $\Ecal_h$, which naturally parametrizes the discrete
domain as $\Omega_h = \Omega_h(\xbm)$. Then the algebraic PDE residual and QoI can be written explicitly
in terms of the PDE state and the nodal coordinates of the mesh. Let $\Rbm : \Rbb^{N_\ubm}\times\Rbb^{N_\xbm} \rightarrow \Rbb^{N_\ubm}$
denote the PDE residual and $J : \Rbb^{N_\ubm}\times\Rbb^{N_\xbm} \rightarrow \Rbb$ denote the QoI with
\begin{equation}
 \Rbm : (\ubm,\xbm) \mapsto \Rbm(\ubm,\xbm), \qquad
 J : (\ubm,\xbm) \mapsto J(\ubm,\xbm).
\end{equation}

In practice, it is rarely convenient to directly use these quantities to pose a shape optimization problem because
directly optimizing the nodal coordinates leads to many parameters and is difficult to enforce smoothness
of the resulting geometry.
In this work, we partition the nodal coordinates as $\xbm = (\xbm_\mathtt{o}, \xbm_\mathtt{c})$,
where $\xbm_\mathtt{o}\in\Rbb^{N_\xbm^\mathtt{o}}$ are the DoFs that will be optimized
and $\xbm_\mathtt{c}\in\Rbb^{N_\xbm^\mathtt{c}}$ are the constrained DoFs that will be determined via linear elasticity
(considering the mesh to be a pseudo-structure with displacement-driven deformation) to obtain
a well-conditioned mesh \cite{farhat1998torsional,lesoinne_linearized_2001}.
That is, we define the constrained DoFs in terms of the optimized DoFs as
\begin{equation}\label{eqn:hdm:shapeopt:mesh_coord}
 \xbm_\mathtt{c} = -\Kbm_{\mathtt{c}\mathtt{c}}^{-1}\Kbm_{\mathtt{c}\mathtt{o}} \xbm_\mathtt{o}
\end{equation}
where $\Kbm\in\Rbb^{N_\xbm\times N_\xbm}$ is the linear elasticity stiffness matrix and
$\Kbm_{\mathtt{c}\mathtt{c}}$, $\Kbm_{\mathtt{c}\mathtt{o}}$ indicate its restriction to the rows
corresponding to constrained DoFs and columns corresponding to the constrained and optimized
DoFs, respectively.

Next, we parametrize the nodal coordinates by introducing a mapping from
the parameter domain
\begin{equation}\label{eqn:hdm:shapeopt:param_mapping}
 \phibold : \Dcal \rightarrow \Rbb^{N_\xbm}, \qquad \mubold \mapsto \begin{bmatrix} \Ibm \\  -\Kbm_{\mathtt{c}\mathtt{c}}^{-1}\Kbm_{\mathtt{c}\mathtt{o}} \end{bmatrix} \phibold_\mathtt{o}(\mubold),
\end{equation}
where $\phibold_\mathtt{o} : \Dcal\rightarrow \Rbb^{N_\xbm^\mathtt{o}}$ directly parametrizes the optimized nodal DoFs.
Then, for a given $\mubold\in\Rbb^{N_\xbm}$, the corresponding nodal coordinates are defined as
$\xbm = \phibold(\mubold)$. In this work, the optimized DoFs correspond to the nodal positions on the surface whose shape is being
optimized and the mapping $\phibold_\mathtt{o}$ is a Bezier parametrization of the nodal coordinates.

From these definitions, we define the $\mubold$-parametrized PDE residual
($\rbm$) in (\ref{eqn:hdm:res_eqn}) and $\mubold$-parametrized QoI ($j$) as
\begin{equation}
\rbm(\ubm,\mubold) = \Rbm(\ubm, \phibold(\mubold)), \qquad
j(\ubm,\mubold) = J(\ubm, \phibold(\mubold)).
\end{equation}
With these definitions, the shape optimization problem exactly fits the form of the optimization (\ref{eqn:hdm:opt:full_space}).
The unassembled form of the residual and quantity of interest are
\begin{equation}
 \rbm(\ubm,\mubold) = \sum_{e=1}^{N_\mathtt{e}} \Rbm_e(\ubm_e,\ubm_e',\phibold_e(\mubold)), \qquad
 j(\ubm,\mubold) = \sum_{e=1}^{N_\mathtt{e}} J_e(\ubm_e,\phibold_e(\mubold)),
\end{equation}
where $\Rbm_e : \Rbb^{N_\ubm^e}\times\Rbb^{N_\ubm^{e'}}\times\Rbb^{N_\xbm^e}$ with
$\Rbm_e : (\ubm_e,\ubm_e',\xbm_e) \mapsto \Rbm_e(\ubm_e,\ubm_e',\xbm_e)$ is the residual
contribution of element $e$, $\ubm_e$ and $\ubm_e'$ are defined in Section~\ref{sec:pdeopt:gov_eqn},
$\xbm_e = \Qbm_e^T\xbm \in \Rbb^{N_\xbm^e}$ are the mesh coordinate DoFs associated with
element $e$, and $\phibold_e : \Dcal \rightarrow \Rbb^{N_\xbm^e}$ with
$\phibold_e : \mubold \mapsto \Qbm_e^T\phibold(\mubold)$ and $\Qbm_e\in\Rbb^{N_\xbm\times N_\xbm^e}$
is the assembly operator that maps element mesh coordinate DoFs to global mesh coordinate DoFs.
The adjoint residual, sensitivity residual, and gradient reconstruction are defined similarly with the
observation that derivatives of $\Rbm_e$ and $J_e$ with respect to $\mubold$ must use the chain
rule, e.g.,
\begin{equation}
  \pder{}{\mubold} \left[\Rbm_e(\ubm_e,\ubm_e',\phibold_e(\mubold))\right] =
  \pder{\Rbm_e}{\xbm_e}(\ubm_e,\ubm_e',\phibold_e(\mubold)) \pder{\phibold_e}{\mubold}(\mubold).
\end{equation}

\section{Hyperreduction}
\label{sec:hyperreduction}
In this section, we enhance the empirical quadrature procedure (EQP) developed by Yano \cite{yano_discontinuous_2019} with additional constraints required in the optimization setting that will be used to prove global convergence when embedded in a trust-region framework. To this end, we introduce standard projection-based model reduction (Section \ref{sec:hyperreduction:rom}), the adapted EQP method (Section \ref{sec:hyperreduction:lp_eqp}), and a technique to accelerate mesh motion in the shape optimization setting (Section \ref{sec:hyperreduction:shapeopt}).

\subsection{Projection-based model reduction}
\label{sec:hyperreduction:rom}
Projection-based model reduction begins with the ansatz that the primal state lies in a low-dimensional affine subspace
$\Vboldcal_\Phibold = \{\Phibold \hat\ybm \mid \hat\ybm\in\Rbb^n\} \subset \Rbb^{N_\ubm}$, where $\Phibold \in \Rbb^{N_\ubm \times n}$ with $n \ll N_\ubm$ is the reduced basis. That is, 
we approximate the primal state $\ubm^\star$ as
\begin{equation} \label{eqn:rom:ansatz}
    \ubm^\star \approx \hat\ubm_\Phibold^\star \coloneqq \Phibold \hat\ybm^\star,
\end{equation}
where $\hat\ubm_\Phibold^\star\in\Vboldcal_\Phibold$ is the subspace approximation of the primal
state $\ubm_\star$ and $\hat\ybm_\Phibold^\star \in \Rbb^n$ contains the corresponding reduced coordinates.
The construction of $\Phibold$ will be deferred to Section~\ref{sec:trammo:eqp_tr}.
The primal reduced coordinates are implicitly defined as the solution of the Galerkin reduced-order model:
given $\mubold\in\Dcal$, find $\hat\ybm_\Phibold^\star\in\Rbb^n$ such that
\begin{equation}\label{eqn:rom:res_eqn}
    \hrbm_\Phibold(\hat\ybm_\Phibold^\star, \mubold) = \zerobold,
\end{equation}
where $\hrbm_\Phibold : \Rbb^{n}\times\Dcal \rightarrow \Rbb^n$ with
\begin{equation}\label{eqn:rom:res}
    \hrbm_\Phibold : (\hat\ybm,\mubold) \mapsto \Phibold^T \rbm(\Phibold\hat\ybm, \mubold),
\end{equation}
which is obtained by substituting the ROM ansatz (\ref{eqn:rom:ansatz}) into the governing equation (\ref{eqn:hdm:res_eqn})
and projecting the resulting residual onto the columnspace of the reduced basis (Galerkin projection). We assume that for
every $\mubold\in\Dcal$, there exists a unique primal solution $\hat\ybm_\Phibold^\star=\hat\ybm_\Phibold^\star(\mubold)$
satisfying (\ref{eqn:rom:res_eqn}) and that the implicit map $\mubold \mapsto \hat\ybm_\Phibold^\star(\mubold)$ is continuously
differentiable. The reduced residual ($\hrbm_\Phibold$) inherits an unassembled structure from the HDM
\begin{equation} \label{eqn:rom:res_elem}
 \hrbm_\Phibold(\hat\ybm, \mubold) =
 \sum_{e=1}^{N_\mathtt{e}} \Phibold_e^T \rbm_\Phibold(\Phibold_e\hat\ybm, \Phibold_e'\hat\ybm, \mubold).
\end{equation}
where $\Phibold_e \coloneqq \Pbm_e^T\Phibold \in \Rbb^{N_\ubm^e \times n}$ and
$\Phibold_e' \coloneqq (\Pbm_e')^T\Phibold \in \Rbb^{N_\ubm^{e'} \times n}$ are the
reduced bases restricted to the elemental degrees of freedom.

The QoI is reduced by substituting the ROM approximation into the original QoI function. Let
$\hat{j}_\Phibold : \Rbb^n\times\Dcal \rightarrow \Rbb$ be the reduced quantity of interest
and $\func{\hat{f}_\Phibold}{\Dcal}{\Rbb}$ be the corresponding quantity restricted
to the ROM solution manifold, where
\begin{equation} \label{eqn:rom:qoi}
 \hat{j}_\Phibold : (\hat\ybm,\mubold) \mapsto j(\Phibold\hat\ybm,\mubold), \qquad
 \hat{f}_\Phibold : \mubold \mapsto \hat{j}_\Phibold(\hat\ybm_\Phibold^\star(\mubold),\mubold).
\end{equation}
The reduced QoI inherits an unassembled structure from the HDM
\begin{equation} \label{eqn:rom:qoi_elem}
 \hat{j}_\Phibold(\hat\ybm,\mubold) = \sum_{e=1}^{N_\mathtt{e}} j_e(\Phibold_e\hat\ybm,\mubold).
\end{equation}

The reduced adjoint residual, $\hrlam_\Phibold : \Rbb^n\times\Rbb^n\times\Dcal \rightarrow \Rbb^n$, is defined as
\begin{equation} \label{eqn:rom:res_adj}
 \hrlam_\Phibold : (\hat\zbm,\hat\ybm,\mubold) \mapsto \pder{\hat\rbm_\Phibold}{\hat\ybm}(\hat\ybm,\mubold)^T\hat\zbm - \pder{\hat{j}_\Phibold}{\hat\ybm}(\hat\ybm,\mubold)^T = \Phibold^T \rlam(\Phibold\hat\zbm, \Phibold\hat\ybm,\mubold),
\end{equation}
where the equality follows directly from the definitions in (\ref{eqn:hdm:res_adj}), (\ref{eqn:rom:res}), and (\ref{eqn:rom:qoi}).
The reduced adjoint problem is: given $\mubold\in\Dcal$ and the corresponding reduced primal solution
$\hat\ybm_\Phibold^\star$ satisfying (\ref{eqn:rom:res_eqn}), find the reduced adjoint solution $\hat\lambdabold_\Phibold^\star\in\Rbb^n$
satisfying
\begin{equation}
 \hrlam_\Phibold(\hat\lambdabold_\Phibold^\star, \hat\ybm_\Phibold^\star, \mubold) = \zerobold.
\end{equation}
Assuming the reduced residual Jacobian is well-defined and invertible for every $(\hat\ybm_\Phibold^\star(\mubold),\mubold)$
pair with $\mubold\in\Dcal$, there exists a unique reduced adjoint solution
\begin{equation}
 \hat\lambdabold_\Phibold^\star(\mubold) = \pder{\hat\rbm_\Phibold}{\hat\ybm}(\hat\ybm_\Phibold^\star(\mubold),\mubold)^{-T}\pder{\hat{j}_\Phibold}{\hat\ybm}(\hat\ybm_\Phibold^\star(\mubold),\mubold)^T,
\end{equation}
making the implicit map $\mubold\mapsto\hat\lambdabold_\Phibold^\star(\mubold)$ well-defined for all $\mubold\in\Dcal$.
The reduced adjoint residual inherits an unassembled structure from the HDM using
the relationships in (\ref{eqn:hdm:res_qoi_adj_elem}) and (\ref{eqn:rom:res_adj})
\begin{equation} \label{eqn:rom:res_adj_elem}
  \hrlam_\Phibold(\hat\zbm,\hat\ybm,\mubold) =
   \sum^{N_\mathtt{e}}_{e=1} \bracket{
        -\Phibold_e^T\pder{j_e}{\ubm_e}(\Phibold_e\hat\ybm,\mubold)^T
           + \Phibold_e^T\pder{\rbm_e}{\ubm_e}(\Phibold_e\hat\ybm, \Phibold_e'\hat\ybm, \mubold)^T \Phibold_e\hat\zbm
            + (\Phibold_e')^T\pder{\rbm_e}{\ubm_e'}(\Phibold_e\hat\ybm, \Phibold_e'\hat\ybm,\mubold)^T \Phibold_e\hat\zbm
        }.
\end{equation}
From the primal-adjoint pair ($\hat\ybm_\Phibold^\star,\hat\lambdabold_\Phibold^\star$), the gradient of the reduced QoI ($\hat{f}_\Phibold$),
$\nabla \hat{f}_\Phibold : \Dcal \rightarrow {\Rbb^{N_\mubold}}$, is computed as
\begin{equation}
 \nabla \hat{f}_\Phibold : \mubold \mapsto \hat\gbm_\Phibold^\lambda(\hat\lambdabold_\Phibold^\star(\mubold), \hat\ybm_\Phibold^\star(\mubold), \mubold)^T =
 \pder{\hat{j}_\Phibold}{\mubold}(\hat\ybm_\Phibold^\star(\mubold),\mubold)^T - \pder{\hat\rbm_\Phibold}{\mubold}(\hat\ybm_\Phibold^\star(\mubold),\mubold)^T\hat\lambdabold_\Phibold^\star(\mubold),
\end{equation}
where the operator that reconstructs the reduced QoI gradient from the reduced adjoint solution,
$\hat\gbm_\Phibold^\lambda : \Rbb^n\times\Rbb^n\times\Dcal \rightarrow \Rbb^{1\times N_\mubold}$, is
\begin{equation}
 \hat\gbm_\Phibold^\lambda : (\hat\zbm, \hat\ybm, \mubold) \mapsto
 \pder{\hat{j}_\Phibold}{\mubold}(\hat\ybm,\mubold) - \hat\zbm^T\pder{\hat\rbm_\Phibold}{\mubold}(\hat\ybm,\mubold) =
 \gbm^\lambdabold(\Phibold\hat\zbm, \Phibold\hat\ybm,\mubold).
\end{equation}
The reduced gradient operator also inherits an unassembled structure from the HDM
\begin{equation} \label{eqn:rom:qoi_grad_elem}
\hat\gbm_\Phibold^\lambda(\hat\zbm, \hat\ybm, \mubold) = \sum^{N_\mathtt{e}}_{e=1} \bracket{
        \pder{j_e}{\mubold}(\Phibold_e\hat\ybm, \mubold) - \hat\zbm^T\Phibold_e^T \pder{\rbm_e}{\mubold}(\Phibold_e\hat\ybm, \Phibold_e'\ybm, \mubold)}.
\end{equation}

The reduced sensitivity residual, $\hat\rbm^\partial_\Phibold : \Rbb^{n\times N_\mubold}\times\Rbb^n\times\Dcal \rightarrow \Rbb^{n\times N_\mubold}$, is defined as
\begin{equation} \label{eqn:rom:res_sens}
 \hat\rbm^\partial_\Phibold : (\hat\wbm,\hat\ybm,\mubold) \mapsto \pder{\hat\rbm_\Phibold}{\hat\ybm}(\hat\ybm,\mubold)\hat\wbm + \pder{\hat{\rbm}_\Phibold}{\mubold}(\hat\ybm,\mubold) = \Phibold^T \hat\rbm^\partial(\Phibold\hat\wbm, \Phibold\hat\ybm,\mubold),
\end{equation}
where the equality follows directly from the definitions in (\ref{pde:eqn:res_sens}) and (\ref{eqn:rom:res}).
The reduced sensitivity problem is: given $\mubold\in\Dcal$ and the corresponding reduced primal solution
$\hat\ybm_\Phibold^\star$ satisfying (\ref{eqn:rom:res_eqn}), find the reduced sensitivity solution $\partial_\mubold\hat\ybm_\Phibold^\star\in\Rbb^{n\times N_\mubold}$
satisfying
\begin{equation}
 \rbm^\partial_\Phibold(\partial_\mubold\hat\ybm_\Phibold^\star, \hat\ybm_\Phibold^\star, \mubold) = \zerobold.
\end{equation}
Assuming the reduced residual Jacobian is well-defined and invertible for every $(\hat\ybm_\Phibold^\star(\mubold),\mubold)$
pair with $\mubold\in\Dcal$, there exists a unique reduced sensitivity solution
\begin{equation}
 \partial_\mubold\hat\ybm_\Phibold^\star(\mubold) = -\pder{\hat\rbm_\Phibold}{\hat\ybm}(\hat\ybm_\Phibold^\star(\mubold),\mubold)^{-1}\pder{\hat\rbm_\Phibold}{\mubold}(\hat\ybm_\Phibold^\star(\mubold),\mubold),
\end{equation}
making the implicit map $\mubold\mapsto\partial_\mubold\hat\ybm_\Phibold^\star(\mubold)$ well-defined for all $\mubold\in\Dcal$.
The reduced sensitivity residual inherits an unassembled structure from the HDM using
the relationships in (\ref{eqn:hdm:res_sens_elem}) and (\ref{eqn:rom:res_sens})
\begin{equation} \label{eqn:rom:sens_adj_elem}
  \hat\rbm^\partial_\Phibold(\hat\wbm,\hat\ybm,\mubold) =
   \sum^{N_\mathtt{e}}_{e=1} \bracket{
       \Phibold_e^T\pder{\rbm_e}{\ubm_e}(\Phibold_e\hat\ybm, \Phibold_e'\hat\ybm, \mubold) \Phibold_e\hat\wbm +
       \Phibold_e^T\pder{\rbm_e}{\ubm_e'}(\Phibold_e\hat\ybm, \Phibold_e'\hat\ybm, \mubold) \Phibold_e'\hat\wbm +
       \Phibold_e^T\pder{\rbm_e}{\mubold}(\Phibold_e\hat\ybm, \Phibold_e'\hat\ybm, \mubold)
     }.
\end{equation}

Despite the potentially significant reduction in degrees of freedom between the HDM system (\ref{eqn:hdm:res_eqn}) and the
reduced system (\ref{eqn:rom:res_eqn}), computational efficiency will not necessarily be achieved due to the cost of
constructing the nonlinear terms. This can be clearly seen from the unassembled form of the primal residual
(\ref{eqn:rom:res_elem}), QoI (\ref{eqn:rom:qoi_elem}), adjoint residual (\ref{eqn:rom:res_adj}), sensitivity residual (\ref{eqn:rom:res_sens}), and QoI gradient (\ref{eqn:rom:qoi_grad_elem}), namely, each
of these operations requires elemental operations \textit{for all elements}. For systems comprised of many elements,
e.g., high-fidelity discretizations of PDEs, this can be a serious bottleneck. To accelerate formation of the nonlinear
terms, we turn to EQP hyperreduction.

\begin{remark} \label{rem:ptc_vs_newt}
One source of computational efficiency in the model reduction setting is the reduced-order model in
(\ref{eqn:rom:res_eqn}) can usually be solved directly using Newton's method initialized from the snapshots used to
train the basis $\Phibold$. In the HDM setting, Newton's method can rarely be used directly for
fluid applications; usually, some form of continuation is required for robust convergence, which
can require a significant number of iterations. This difference will be leveraged and discussed
further in Section~\ref{sec:numexp}.
\end{remark}

\subsection{An optimization-aware empirical quadrature procedure}
\label{sec:hyperreduction:lp_eqp}
To accelerate the assembly of the nonlinear terms in the various reduced quantities introduced in
Section~\ref{sec:hyperreduction:rom}, we use the empirical quadrature procedure \cite{yano_lp_2019, yano_discontinuous_2019}. In the adjoint-based
optimization setting, we use EQP to accelerate any operation that involves assembly over all elements (Section~\ref{sec:hyperreduction:eqp_form}),
i.e., evaluation of the primal and adjoint residuals, quantity of interest, and gradient reconstruction.
To ensure all hyperreduced quantities are accurate with respect to their reduced counterparts, we include additional
constraints on the original EQP linear program introduced in \cite{yano_discontinuous_2019} (Section~\ref{sec:hyperreduction:eqp_training}).

\subsubsection{Formulation}
\label{sec:hyperreduction:eqp_form}
The EQP construction replaces the unassembled form of the reduced primal residual with a weighted (hyperreduced) version,
$\tilde\rbm_\Phibold : \Rbb^n \times \Dcal \times \Rcal$, where
\begin{equation}\label{eqn:eqp:res_elem}
 \tilde\rbm_\Phibold : (\tilde\ybm, \mubold; \rhobold) \mapsto \sum_{e=1}^{N_\mathtt{e}} \rho_e\Phibold_e^T \rbm_e(\Phibold_e\tilde\ybm, \Phibold_e'\tilde\ybm,\mubold)
\end{equation}
and $\rhobold\in\Rcal$ is the vector of weights with $\Rcal\subset\Rbb^{N_\mathtt{e}}$ the set of admissible weights
such that $\onebold\in\Rcal$ ($\onebold$ is the vector with each entry equal to one). For each element $\Omega_e\in\Ecal_h$ with $\rho_e=0$, the operations on the element can be completely skipped so computational efficiency is achieved when the vector of weights is highly sparse; construction of $\rhobold$ is deferred to Section~\ref{sec:hyperreduction:eqp_training} and \ref{sec:hyperreduction:eqp_err_est}. The primal EQP problem reads: given $\mubold\in\Dcal$ and $\rhobold\in\Rcal$, find $\tilde\ybm_\Phibold^\star\in\Rbb^n$ such that
\begin{equation}\label{eqn:eqp:res_eqn}
 \tilde\rbm_\Phibold(\tilde\ybm_\Phibold^\star,\mubold;\rhobold) = \zerobold.
\end{equation}
For each $\rhobold\in\Rcal$, we assume there is a unique primal solution
$\tilde\ybm_\Phibold^\star = \tilde\ybm_\Phibold^\star(\mubold;\rhobold)$ satisfying (\ref{eqn:eqp:res_eqn}) for every $\mubold\in\Dcal$.
\begin{remark}\label{rem:eqp:recon_rom}
The standard and weighted reduced primal residual are related as
\begin{equation}
\hat\rbm_\Phibold(\hat\ybm,\mubold) = \tilde\rbm_\Phibold(\hat\ybm,\mubold;\onebold)
\end{equation}
for any $\hat\ybm\in\Rbb^n$ and $\mubold\in\Dcal$. Therefore, the corresponding reduced coordinates are equal
\begin{equation}
 \hat\ybm_\Phibold^\star(\mubold) = \tilde\ybm_\Phibold^\star(\mubold; \onebold).
\end{equation}
\end{remark}
\begin{remark}
Uniqueness of the primal solution will not hold for all $\rhobold\in\Rcal$ because
$\tilde\rbm_\Phibold(\,\cdot\,; \,\cdot\,, \zerobold)=\zerobold$, which means any $\tilde\ybm\in\Rbb^n$ satisfies (\ref{eqn:eqp:res_eqn}).
Such cases are not encountered in practice because $\rhobold$ is not chosen randomly, rather it is
the solution of a linear program (Section~\ref{sec:hyperreduction:eqp_training}) that promotes sparsity while retaining accuracy with respect to $\hat\rbm_\Phibold$.
\end{remark}

In the optimization setting, computational efficiency is also required for the evaluation of the QoI, the adjoint residual
(similar to \cite{yano2020goal,du_adaptive_2021}), and the gradient reconstruction. We use the same approach of introducing weights
($\rhobold$) into the unassembled form; efficiency is achieved when $\rhobold$ is sparse.
The hyperreduced QoI, $\tilde{j}_\Phibold : \Rbb^n\times\Dcal\times\Rcal \rightarrow \Rbb$, and
its restriction to the EQP solution manifold, $\tilde{f}_\Phibold : \Dcal\times\Rcal \rightarrow \Rbb$,
are defined as
\begin{equation}\label{eqn:eqp:qoi_elem}
 \tilde{j}_\Phibold : (\tilde\ybm, \mubold; \rhobold) \mapsto \sum_{e=1}^{N_\mathtt{e}} \rho_e j_e(\Phibold_e\tilde\ybm,\mubold), \qquad
 \tilde{f}_\Phibold : (\mubold; \rhobold) \mapsto \tilde{j}_\Phibold(\tilde\ybm_\Phibold^\star(\mubold;\rhobold),\mubold;\rhobold).
\end{equation}
The hyperreduced adjoint residual, $\trlam_\Phibold : \Rbb^n\times\Rbb^n\times\Dcal\times\Rcal \rightarrow \Rbb^n$, is defined as
\begin{equation} \label{eqn:eqp:res_adj}
 \trlam_\Phibold : (\tilde\zbm,\tilde\ybm,\mubold;\rhobold) \mapsto \pder{\tilde\rbm_\Phibold}{\tilde\ybm}(\tilde\ybm,\mubold;\rhobold)^T\tilde\zbm - \pder{\tilde{j}_\Phibold}{\tilde\ybm}(\tilde\ybm,\mubold;\rhobold)^T,
\end{equation}
or, in unassembled form,
\begin{equation} \label{eqn:eqp:res_adj_elem}
  \trlam_\Phibold(\tilde\zbm,\tilde\ybm,\mubold;\rhobold) =
   \sum^{N_\mathtt{e}}_{e=1} \rho_e\bracket{
        -\Phibold_e^T\pder{j_e}{\ubm_e}(\Phibold_e\tilde\ybm,\mubold)^T
           + \Phibold_e^T\pder{\rbm_e}{\ubm_e}(\Phibold_e\tilde\ybm, \Phibold_e'\tilde\ybm, \mubold)^T \Phibold_e\tilde\zbm
            + (\Phibold_e')^T\pder{\rbm_e}{\ubm_e'}(\Phibold_e\tilde\ybm, \Phibold_e'\tilde\ybm,\mubold)^T \Phibold_e\tilde\zbm
        }.
\end{equation}
The hyperreduced adjoint problem is: given $\mubold\in\Dcal$ and the corresponding hyperreduced primal solution
$\tilde\ybm_\Phibold^\star$ satisfying (\ref{eqn:eqp:res_eqn}), find the hyperreduced adjoint solution $\tilde\lambdabold_\Phibold^\star\in\Rbb^n$ satisfying
\begin{equation}
 \trlam_\Phibold(\tilde\lambdabold_\Phibold^\star, \tilde\ybm_\Phibold^\star, \mubold; \rhobold) = \zerobold.
\end{equation}
For each $\rhobold\in\Rcal$, we assume the hyperreduced Jacobian is well-defined and invertible for every $(\tilde\ybm_\Phibold^\star(\mubold;\rhobold),\mubold)$ pair with $\mubold\in\Dcal$ so there exists a unique hyperreduced adjoint solution
\begin{equation}
 \tilde\lambdabold_\Phibold^\star(\mubold;\rhobold) = \pder{\tilde\rbm_\Phibold}{\tilde\ybm}(\tilde\ybm_\Phibold^\star(\mubold;\rhobold),\mubold;\rhobold)^{-T}\pder{\tilde{j}_\Phibold}{\tilde\ybm}(\tilde\ybm_\Phibold^\star(\mubold;\rhobold),\mubold;\rhobold)^T,
\end{equation}
making the implicit map $(\mubold;\rhobold)\mapsto\tilde\lambdabold_\Phibold^\star(\mubold;\rhobold)$ well-defined.
From the primal-adjoint pair ($\tilde\ybm_\Phibold^\star,\tilde\lambdabold_\Phibold^\star$), the gradient of the reduced QoI ($\tilde{f}_\Phibold$),
$\nabla \tilde{f}_\Phibold : \Dcal\times\Rcal \rightarrow \Rbb^{N_\mubold}$, is computed as
\begin{equation}
 \nabla \tilde{f}_\Phibold : (\mubold;\rhobold) \mapsto \tilde\gbm_\Phibold^\lambda(\tilde\ybm_\Phibold^\star(\mubold;\rhobold),\tilde\lambdabold_\Phibold^\star(\mubold;\rhobold), \mubold; \rhobold)^T =
 \pder{\tilde{j}_\Phibold}{\mubold}(\tilde\ybm_\Phibold^\star(\mubold;\rhobold),\mubold;\rhobold)^T - \pder{\tilde\rbm_\Phibold}{\mubold}(\tilde\ybm_\Phibold^\star(\mubold;\rhobold),\mubold;\rhobold)^T\tilde\lambdabold_\Phibold^\star(\mubold;\rhobold),
\end{equation}
where the operator that reconstructs the hyperreduced QoI gradient from the hyperreduced adjoint solution,
$\tilde\gbm_\Phibold^\lambda : \Rbb^n\times\Rbb^n\times\Dcal\times\Rcal \rightarrow \Rbb^{1\times N_\mubold}$, is
\begin{equation}
 \tilde\gbm_\Phibold^\lambda : (\tilde\zbm, \tilde\ybm,\mubold;\rhobold) \mapsto
 \pder{\tilde{j}_\Phibold}{\mubold}(\tilde\ybm,\mubold;\rhobold) - \tilde\zbm^T\pder{\tilde\rbm_\Phibold}{\mubold}(\tilde\ybm,\mubold; \rhobold),
\end{equation}
or, in unassembled form,
\begin{equation}
\tilde\gbm_\Phibold^\lambda(\tilde\zbm, \tilde\ybm, \mubold; \rhobold) = \sum^{N_\mathtt{e}}_{e=1} \rho_e\bracket{
        \pder{j_e}{\mubold}(\Phibold_e\tilde\ybm, \mubold) - \tilde\zbm^T\Phibold_e^T \pder{\rbm_e}{\mubold}(\Phibold_e\tilde\ybm, \Phibold_e'\tilde\ybm, \mubold)}.
\end{equation}

The hyperreduced sensitivity residual, $\tilde\rbm^\partial_\Phibold : \Rbb^{n\times N_\mubold}\times\Rbb^n\times\Dcal\times\Rcal \rightarrow \Rbb^{n\times N_\mubold}$, is defined as
\begin{equation} \label{eqn:eqp:res_sens}
 \tilde\rbm^\partial_\Phibold : (\tilde\wbm,\tilde\ybm,\mubold;\rhobold) \mapsto \pder{\tilde\rbm_\Phibold}{\tilde\ybm}(\tilde\ybm,\mubold;\rhobold)\tilde\wbm + \pder{\tilde\rbm_\Phibold}{\mubold}(\tilde\ybm,\mubold;\rhobold),
\end{equation}
or, in unassembled form,
\begin{equation} \label{eqn:eqp:res_sens_elem}
  \tilde\rbm^\partial_\Phibold(\tilde\wbm,\tilde\ybm,\mubold;\rhobold) =
   \sum^{N_\mathtt{e}}_{e=1} \rho_e \bracket{
       \Phibold_e^T\pder{\rbm_e}{\ubm_e}(\Phibold_e\tilde\ybm, \Phibold_e'\tilde\ybm, \mubold) \Phibold_e\tilde\wbm +
       \Phibold_e^T\pder{\rbm_e}{\ubm_e'}(\Phibold_e\tilde\ybm, \Phibold_e'\tilde\ybm, \mubold) \Phibold_e'\tilde\wbm +
       \Phibold_e^T\pder{\rbm_e}{\mubold}(\Phibold_e\tilde\ybm, \Phibold_e'\tilde\ybm, \mubold)
     }.
\end{equation}
The hyperreduced sensitivity problem is: given $\mubold\in\Dcal$ and the corresponding hyperreduced primal solution $\tilde\ybm_\Phibold^\star$ satisfying (\ref{eqn:eqp:res_eqn}), find the hyperreduced sensitivity solution $\partial_\mubold\tilde\ybm_\Phibold^\star\in\Rbb^{n\times N_\mubold}$ satisfying
\begin{equation}
 \tilde\rbm^\partial_\Phibold(\partial_\mubold\tilde\ybm_\Phibold^\star, \tilde\ybm_\Phibold^\star, \mubold; \rhobold) = \zerobold.
\end{equation}
For each $\rhobold\in\Rcal$, we assume the hyperreduced Jacobian is well-defined and invertible for every $(\tilde\ybm_\Phibold^\star(\mubold;\rhobold),\mubold)$ pair with $\mubold\in\Dcal$ so there exists a unique hyperreduced sensitivity solution
\begin{equation}
 \partial_\mubold\tilde\ybm_\Phibold^\star(\mubold;\rhobold) = -\pder{\tilde\rbm_\Phibold}{\tilde\ybm}(\tilde\ybm_\Phibold^\star(\mubold;\rhobold),\mubold;\rhobold)^{-1}\pder{\tilde\rbm_\Phibold}{\mubold}(\tilde\ybm_\Phibold^\star(\mubold;\rhobold),\mubold;\rhobold),
\end{equation}
making the implicit map $(\mubold;\rhobold)\mapsto\partial_\mubold\ybm_\Phibold^\star(\mubold;\rhobold)$ well-defined.

\begin{remark}\label{rem:eqp:recon_other_rom}
Similar to the primal residual, the hyperreduced form of the other quantities agree with the reduced quantities when
$\rhobold=\onebold$, i.e.,
\begin{equation}
\begin{aligned}
\hat{j}_\Phibold(\hat\ybm,\mubold) = \tilde{j}_\Phibold(\hat\ybm,\mubold;\onebold), \\
\hat\rbm_\Phibold^\lambda(\hat\zbm,\hat\ybm,\mubold) = \tilde\rbm_\Phibold^\lambda(\hat\zbm,\hat\ybm,\mubold;\onebold),  \\
\hat\gbm_\Phibold^\lambda(\hat\zbm,\hat\ybm,\mubold) = \tilde\gbm_\Phibold^\lambda(\hat\zbm,\hat\ybm,\mubold;\onebold), \\
\hat\rbm_\Phibold^\partial(\hat\wbm,\hat\ybm,\mubold) = \tilde\rbm_\Phibold^\partial(\hat\wbm,\hat\ybm,\mubold;\onebold),  \\
\end{aligned}
\end{equation}
for any $\hat\ybm,\hat\zbm\in\Rbb^n$, $\hat\wbm\in\Rbb^{n\times N_\mubold}$, and $\mubold\in\Dcal$.
\end{remark}

\subsubsection{Training}
\label{sec:hyperreduction:eqp_training}
The success of EQP is inherently linked to the construction of a sparse weight vector that ensures the hyperreduced
quantities introduced in Section~\ref{sec:hyperreduction:eqp_form} accurately approximate the corresponding reduced quantity in Section~\ref{sec:hyperreduction:rom}.
In \cite{yano_lp_2019,yano_discontinuous_2019}, the EQP weights are chosen to be the solution of an $\ell_1$ minimization problem (to promote sparsity) that
includes several constraints, most important of which is the \textit{manifold accuracy} constraint that requires the
reduced ($\hat\rbm$) and hyperreduced ($\tilde\rbm$) primal residuals are sufficiently close on some training set.
Manifold constraints on the quantity of interest and adjoint residual, among others, are included when EQP is
used to accelerate dual-weighted residual error estimation \cite{yano2020goal,du_adaptive_2021}. In the adjoint-based optimization
setting, we require the hyperreduced primal residual, adjoint residual, quantity of interest, and gradient reconstruction operator
accurately approximate the corresponding reduced quantity.

To this end, we let $\Phibold$ be a given reduced basis and $\Xibold\subset \Dcal$ be a collection
of EQP training parameters, and define the EQP weights, $\rhobold^\star$,
as the solution of the following linear program
\begin{equation}\label{eqn:eqp:linprog}
 \rhobold^\star = \argoptunc{\rhobold\in\Ccal^\mathtt{nn}\cap\Ccal_{\Phibold,\Xibold,\deltabold}}{\sum_{e=1}^{N_\mathtt{e}} \rho_e},
\end{equation}
where $\Ccal^\mathtt{nn}$ is the set of nonnegative weights
\begin{equation}
 \Ccal^\mathtt{nn} \coloneqq \left\{ \rhobold\in\Rbb^{N_\mathtt{e}} \suchthat \rho_e \geq 0, ~ e = 1,\dots,N_\mathtt{e} \right\},
\end{equation}
$ \deltabold = (\delta_\mathtt{dv},\delta_\mathtt{rp},\delta_\mathtt{ra},\delta_\mathtt{ga},\delta_\mathtt{q},\delta_\mathtt{rs}) \in\Rbb^6$ is a collection of tolerances, and $\Ccal_{\Phibold,\Xibold,\deltabold}\subset\Rbb^{N_\mathtt{e}}$ is the intersection of some subset
of the following accuracy constraints\footnote{Superscript legend: $\mathtt{nn}$ = \underline{n}on\underline{n}egativity, $\mathtt{dv}$ = \underline{d}omain \underline{v}olume, $\mathtt{rp}$ = \underline{p}rimal \underline{r}esidual, $\mathtt{ra}$ = \underline{a}djoint \underline{r}esidual, $\mathtt{ga}$ = \underline{a}djoint \underline{g}radient reconstruction, $\mathtt{q}$ = \underline{q}uantity of interest, $\mathtt{rs}$ = \underline{s}ensitivity \underline{r}esidual.}
\begin{equation}\label{eqn:eqp:rescon}
\begin{aligned}
 \Ccal_{\delta}^\mathtt{dv} &\coloneqq \left\{ \rhobold\in\Rbb^{N_\mathtt{e}}\suchthat \left| |\Omega| - \sum_{e=1}^{N_\mathtt{e}} \rho_e|\Omega_e|\right| \leq \delta\right\}, \\
 \Ccal_{\Phibold,\Xibold,\delta}^\mathtt{rp} &\coloneqq \left\{ \rhobold\in\Rbb^{N_\mathtt{e}}\suchthat  \norm{\hat\rbm_\Phibold(\hat\ybm_\Phibold^\star(\mubold),\mubold)-\tilde\rbm_\Phibold(\hat\ybm_\Phibold^\star(\mubold),\mubold;\rhobold)}_\infty \leq \delta, \forall \mubold\in\Xibold\right\}, \\
 \Ccal_{\Phibold,\Xibold,\delta}^\mathtt{ra} &\coloneqq \left\{ \rhobold\in\Rbb^{N_\mathtt{e}} \suchthat  \norm{\hat\rbm_\Phibold^\lambda(\hat\lambdabold_\Phibold^\star(\mubold),\hat\ybm_\Phibold^\star(\mubold),\mubold)-\tilde\rbm_\Phibold^\lambda(\hat\lambdabold_\Phibold^\star(\mubold),\hat\ybm_\Phibold^\star(\mubold),\mubold;\rhobold)}_\infty \leq \delta, \forall \mubold\in\Xibold\right\}, \\
 \Ccal_{\Phibold,\Xibold,\delta}^\mathtt{ga} &\coloneqq \left\{ \rhobold\in\Rbb^{N_\mathtt{e}}\suchthat  \norm{\hat\gbm_\Phibold^\lambda(\hat\lambdabold_\Phibold^\star(\mubold),\hat\ybm_\Phibold^\star(\mubold),\mubold)-\tilde\gbm_\Phibold^\lambda(\hat\lambdabold_\Phibold^\star(\mubold),\hat\ybm_\Phibold^\star(\mubold),\mubold;\rhobold)}_\infty \leq \delta, \forall \mubold\in\Xibold\right\}, \\
 \Ccal_{\Phibold,\Xibold,\delta}^\mathtt{q} &\coloneqq \left\{ \rhobold\in\Rbb^{N_\mathtt{e}}\suchthat  \norm{\hat{j}_\Phibold(\hat\ybm_\Phibold^\star(\mubold),\mubold)-\tilde{j}_\Phibold(\hat\ybm_\Phibold^\star(\mubold),\mubold;\rhobold)}_\infty \leq \delta, \forall \mubold\in\Xibold\right\}, \\
 \Ccal_{\Phibold,\Xibold,\delta}^\mathtt{rs} &\coloneqq \left\{ \rhobold\in\Rbb^{N_\mathtt{e}} \suchthat  \norm{\hat\rbm_\Phibold^\partial(\partial_\mubold\hat\ybm_\Phibold^\star(\mubold),\hat\ybm_\Phibold^\star(\mubold),\mubold)-\tilde\rbm_\Phibold^\partial(\partial_\mubold\hat\ybm_\Phibold^\star(\mubold),\hat\ybm_\Phibold^\star(\mubold),\mubold;\rhobold)}_\infty \leq \delta, \forall \mubold\in\Xibold\right\},
\end{aligned}
\end{equation}
where $|\Scal|$ is the volume of the subset $\Scal\subset\Xcal$ of a metric space $\Xcal$.
Each constraint in (\ref{eqn:eqp:rescon}) ensures a selected reduced quantities introduced in
Section~\ref{sec:hyperreduction:rom} is sufficiently approximated by the corresponding hyperreduced quantity in Section~\ref{sec:hyperreduction:eqp_form}.
Finally, we define $\rhobold^\star : (\Phibold,\Xibold,\deltabold) \mapsto \rhobold^\star(\Phibold,\Xibold,\deltabold)$ as
the implicit map from a given reduced basis $\Phibold$, EQP training set $\Xibold$, and tolerances $\deltabold$
to the solution of the linear program in (\ref{eqn:eqp:linprog}). For now, we leave the tolerances and EQP training set unspecified; in Section~\ref{sec:trammo:eqp_tr}, these will be chosen to guarantee global convergence of the trust-region method to a local minimum of the unreduced optimization problem in (\ref{eqn:hdm:opt:full_space}).

\begin{remark} \label{rem:eqp:rescon_simple}
The residual-based constraints can be simplified to
\begin{equation}\label{eqn:eqp:rescon_simple}
\begin{aligned}
 \Ccal_{\Phibold,\Xibold,\delta}^\mathtt{rp} &\coloneqq \left\{ \rhobold\in\Rbb^{N_\mathtt{e}}\suchthat  \norm{\tilde\rbm_\Phibold(\hat\ybm_\Phibold^\star(\mubold),\mubold;\rhobold)}_\infty \leq \delta, \forall \mubold\in\Xibold\right\}, \\
 \Ccal_{\Phibold,\Xibold,\delta}^\mathtt{ra} &\coloneqq \left\{ \rhobold\in\Rbb^{N_\mathtt{e}} \suchthat  \norm{\tilde\rbm_\Phibold^\lambda(\hat\lambdabold_\Phibold^\star(\mubold),\hat\ybm_\Phibold^\star(\mubold),\mubold;\rhobold)}_\infty \leq \delta, \forall \mubold\in\Xibold\right\}, \\
 \Ccal_{\Phibold,\Xibold,\delta}^\mathtt{rs} &\coloneqq \left\{ \rhobold\in\Rbb^{N_\mathtt{e}} \suchthat  \norm{\tilde\rbm_\Phibold^\partial(\partial_\mubold\hat\ybm_\Phibold^\star(\mubold),\hat\ybm_\Phibold^\star(\mubold),\mubold;\rhobold)}_\infty \leq \delta, \forall \mubold\in\Xibold\right\},
\end{aligned}
\end{equation}
by definition of $\hat\ybm_\Phibold^\star$, $\hat\lambdabold_\Phibold^\star$, and $\partial_\mubold\hat\ybm_\Phibold^\star$, where
clearly $\zerobold \in  \Ccal_{\Phibold,\Xibold,\delta}^\mathtt{rp} \cap  \Ccal_{\Phibold,\Xibold,\delta}^\mathtt{ra} \cap  \Ccal_{\Phibold,\Xibold,\delta}^\mathtt{rs}$.
\end{remark}

\begin{remark} \label{rem:eqp:props}
The optimization problem in (\ref{eqn:eqp:linprog}) is guaranteed to have a feasible solution (regardless of which subset of the constraints is used)
because all hyperreduced quantities are equivalent to the corresponding reduced quantity when $\rhobold=\onebold$
(Remarks~\ref{rem:eqp:recon_rom}, \ref{rem:eqp:recon_other_rom}). Furthermore, the optimization problem in (\ref{eqn:eqp:linprog}) is a linear program because each hyperreduced term
is linear in $\rhobold$ and the infinity-norm bounds can be recast as a collection of inequality constraints (upper and lower bounds)
on its argument. Lastly, despite Remark~\ref{rem:eqp:rescon_simple}, the solution of (\ref{eqn:eqp:linprog}) is nonzero, i.e.,
$\rhobold^\star(\Phibold,\Xibold,\deltabold)\neq\zerobold$, provided at least one non-residual constraint
($\Ccal_\delta^\mathtt{dv}$, $\Ccal_{\Phibold,\Xibold,\delta}^\mathtt{ga}$,
$\Ccal_{\Phibold,\Xibold,\delta}^\mathtt{q}$) is included.
\end{remark}

\begin{remark}
The nonnegativity constraint $\Ccal^\mathtt{nn}$ and domain volume constraint $\Ccal_\delta^\mathtt{dv}$ are
included to maintain the interpretation of the weight vector as a sparse quadrature rule in the PDE setting \cite{yano_discontinuous_2019},
but not directly required for global convergence of the trust-region method in Section~\ref{sec:trammo:eqp_tr}. The domain constraint
$\Ccal_\delta^\mathtt{dv}$ is also included to ensure a nonzero weight vector (Remark~\ref{rem:eqp:props}).
\end{remark}

\begin{remark}
The constraints on the quantity of interest $\Ccal_{\Phibold,\Xibold,\delta}^\mathtt{q}$ and sensitivity residual
$\Ccal_{\Phibold,\Xibold,\delta}^\mathtt{rs}$ are not required for global convergence of the trust-region method
in Section~\ref{sec:trammo:eqp_tr}, but will be shown empirically (Section~\ref{sec:numexp}) to improve the convergence rate of the
method.
\end{remark}

\begin{remark}
The constraint $\Ccal_{\Phibold,\Xibold,\delta}^\mathtt{rp}$ is the manifold accuracy constraint in \cite{yano_discontinuous_2019} without the
Jacobian inverse multiplier, which is not needed to obtain a globally convergent trust-region method. The
quantity of interest constraint $\Ccal_{\Phibold,\delta}^\mathtt{q}$ as well as a constraint similar to the
adjoint residual $\Ccal_{\Phibold,\delta}^\mathtt{ra}$ were previously proposed to accelerate dual-weighted
residual error estimation \cite{yano2020goal,du_adaptive_2021}.
\end{remark}

\begin{remark}
Due to the additional constraints in the proposed optimization-aware variant of EQP relative to the original
EQP method, there will be more nonzero weights for a given tolerance. We show in Section~\ref{sec:numexp} that
sparse weight vectors (that manifest as sparse reduced meshes in the PDE setting) will still be obtained,
particularly at early optimization iterations when the tolerances are loose.
\end{remark}

\begin{remark}\label{rem:eqp:rm_lin_depend}
In practice, some of the constraints in (\ref{eqn:eqp:rescon}) can lead to a linear program with a constraint matrix with
(nearly) linearly dependent rows, which can make the linear program difficult and costly to solve. To avoid this, we
remove linearly dependent rows from the constraint matrix using a QR factorization prior to solving the linear program
in (\ref{eqn:eqp:linprog}).
\end{remark}

\begin{remark}
If the DG formulation in (\ref{eqn:hdm:disc_dg}) is used, there is an issue with (\ref{eqn:eqp:rescon_simple}), as stated, in the case where the ROM solution is exact
at $\mubold\in\Xibold$, i.e., $\ubm^\star(\mubold) = \Phibold\hat\ybm_\Phibold^\star(\mubold)$. In this case, due to the structure
of DG in (\ref{eqn:hdm:dg_res_elem}), \textit{each element residual} is zero, i.e., $\rbm_e(\Phibold_e\hat\ybm_\Phibold^\star(\mubold),\mubold) = \zerobold$,
which means $\tilde\rbm_\Phibold(\hat\ybm_\Phibold^\star(\mubold),\mubold;\rhobold)=\zerobold$ \textit{for any}
$\rhobold\in\Rbb^{N_\mathtt{e}}$, which in turn implies $\Ccal_{\Phibold,\Xibold,\deltabold}=\Rbb^{N_\mathtt{e}}$, i.e., the
manifold accuracy constraint is not imposing a meaningful accuracy condition on the weight vector. Two ways to circumvent
this are: (1) use a DG formulation where the test functions of face terms are averaged across neighboring elements \cite{yano_discontinuous_2019}
or (2) split the $\Ccal_{\Phibold,\Xibold,\deltabold}^\mathtt{rp}$ constraint into two constraints, one for the face terms and
one for the volume terms in (\ref{eqn:eqp:rescon_simple}).
\end{remark}

\subsection{Error estimation}
\label{sec:hyperreduction:eqp_err_est}
We now introduce residual-based error estimates for the hyperreduced quantity of interest and its gradient. We begin with
a residual-based error bound that applies under regularity assumptions (Assumptions~\ref{assum:hdm}-\ref{assum:eqp} in~\ref{sec:appendix_A}) independent
of the training procedure for the reduced basis $\Phibold$ and weight vector $\rhobold$.
\begin{theorem}\label{the:qoi_grad_errbnd}
Under Assumptions~\ref{assum:hdm}-\ref{assum:eqp}, there exist constants $c_1,c_2>0$ such that for any $\mubold\in\Dcal$
and $\rhobold\in\Rcal$
\begin{equation}\label{eqn:eqp:qoi_errbnd}
 |f(\mubold) - \tilde{f}_\Phibold(\mubold;\rhobold)| \leq c_1\norm{\rbm(\Phibold\hat\ybm_\Phibold^\star(\mubold),\mubold)} + c_2\norm{\tilde\rbm_\Phibold(\hat\ybm_\Phibold^\star(\mubold),\mubold;\rhobold)} + \abs{\hat{j}(\hat\ybm_\Phibold^\star(\mubold),\mubold)-\tilde{j}_\Phibold(\hat\ybm_\Phibold^\star(\mubold),\mubold;\rhobold)}. 
\end{equation}
Furthermore, there exist constants $c_1',c_2',c_3',c_4'>0$ such that
\begin{equation}\label{eqn:eqp:qoi_grad_errbnd}
\begin{aligned}
 \norm{\nabla f(\mubold) - \nabla\tilde{f}_\Phibold(\mubold;\rhobold)} \leq
 &c_1'\norm{\rbm(\Phibold\hat\ybm_\Phibold^\star(\mubold),\mubold)} +
 c_2'\norm{\rbm^\lambda(\Phibold\hat\lambdabold_\Phibold^\star(\mubold),\Phibold\hat\ybm_\Phibold^\star(\mubold),\mubold)} + \\
 & c_3'\norm{\tilde\rbm_\Phibold(\hat\ybm_\Phibold^\star(\mubold;\rhobold),\mubold;\rhobold)} + 
 c_4'\norm{\tilde\rbm^\lambda(\hat\lambdabold_\Phibold^\star(\mubold),\hat\ybm_\Phibold^\star(\mubold),\mubold;\rhobold)} + \\
 & \norm{\hat\gbm_\Phibold^\lambda(\hat\lambdabold_\Phibold^\star(\mubold),\hat\ybm_\Phibold^\star(\mubold),\mubold) - \tilde\gbm_\Phibold^\lambda(\hat\lambdabold_\Phibold^\star(\mubold),\hat\ybm_\Phibold^\star(\mubold),\mubold;\rhobold)}.
\end{aligned}
\end{equation}
\begin{proof}
See~\ref{sec:appendix_B}.
\end{proof}
\end{theorem}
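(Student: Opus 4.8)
The plan is to prove both bounds by a two-stage telescoping through the reduced model (the $\rhobold=\onebold$ case), converting each solution perturbation into a residual norm by a dual-weighted-residual-type argument. The hypotheses collected in Assumptions~\ref{assum:hdm}--\ref{assum:eqp} furnish the two ingredients this requires: uniform invertibility (with uniformly bounded inverse) of the primal Jacobians $\partial_\ubm\rbm$ and $\partial_{\hat\ybm}\hat\rbm_\Phibold$, together with Lipschitz continuity and uniform boundedness of $j$, $\rbm$, their first derivatives, the adjoint solutions, and the gradient-reconstruction operators. All constants below are understood to be taken uniformly over $\mubold\in\Dcal$ and $\rhobold\in\Rcal$ from these assumptions.

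For the quantity-of-interest bound~\eqref{eqn:eqp:qoi_errbnd}, I insert the reduced value $\hat{f}_\Phibold(\mubold)=j(\Phibold\hat\ybm_\Phibold^\star(\mubold),\mubold)$ and split
\[
 f-\tilde{f}_\Phibold = \left(f-\hat{f}_\Phibold\right) + \left(\hat{f}_\Phibold-\tilde{f}_\Phibold\right).
\]
The first difference equals $j(\ubm^\star,\mubold)-j(\Phibold\hat\ybm_\Phibold^\star,\mubold)$; since $\rbm(\ubm^\star,\mubold)=\zerobold$, applying the fundamental theorem of calculus to both $j$ and $\rbm$ along the segment from $\Phibold\hat\ybm_\Phibold^\star$ to $\ubm^\star$ lets me eliminate $\ubm^\star-\Phibold\hat\ybm_\Phibold^\star$ using the bounded Jacobian inverse, bounding it by $c_1\norm{\rbm(\Phibold\hat\ybm_\Phibold^\star,\mubold)}$. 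For the second difference I insert $\tilde{j}_\Phibold(\hat\ybm_\Phibold^\star,\mubold;\rhobold)$: the gap $\hat{j}_\Phibold(\hat\ybm_\Phibold^\star)-\tilde{j}_\Phibold(\hat\ybm_\Phibold^\star;\rhobold)$ is precisely the third term of the bound, while $\tilde{j}_\Phibold(\hat\ybm_\Phibold^\star;\rhobold)-\tilde{j}_\Phibold(\tilde\ybm_\Phibold^\star;\rhobold)$ is handled by the same argument at the hyperreduced level---using $\tilde\rbm_\Phibold(\tilde\ybm_\Phibold^\star,\mubold;\rhobold)=\zerobold$---to produce $c_2\norm{\tilde\rbm_\Phibold(\hat\ybm_\Phibold^\star,\mubold;\rhobold)}$.

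For the gradient bound~\eqref{eqn:eqp:qoi_grad_errbnd} I telescope the same way, $\nabla f-\nabla\tilde{f}_\Phibold=(\nabla f-\nabla\hat{f}_\Phibold)+(\nabla\hat{f}_\Phibold-\nabla\tilde{f}_\Phibold)$, and use the reconstruction identity $\nabla\hat{f}_\Phibold(\mubold)=\glam(\Phibold\hat\lambdabold_\Phibold^\star,\Phibold\hat\ybm_\Phibold^\star,\mubold)^T$. The first difference is $\glam(\lambdabold^\star,\ubm^\star,\mubold)-\glam(\Phibold\hat\lambdabold_\Phibold^\star,\Phibold\hat\ybm_\Phibold^\star,\mubold)$, which Lipschitz continuity of $\glam$ bounds by $\norm{\ubm^\star-\Phibold\hat\ybm_\Phibold^\star}$ and $\norm{\lambdabold^\star-\Phibold\hat\lambdabold_\Phibold^\star}$. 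The primal error is again controlled by $\norm{\rbm(\Phibold\hat\ybm_\Phibold^\star,\mubold)}$, yielding $c_1'$. For the adjoint error I use $\rlam(\lambdabold^\star,\ubm^\star,\mubold)=\zerobold$ and expand $\rlam(\Phibold\hat\lambdabold_\Phibold^\star,\Phibold\hat\ybm_\Phibold^\star,\mubold)$: its leading part is $\partial_\ubm\rbm(\Phibold\hat\ybm_\Phibold^\star,\mubold)^T(\Phibold\hat\lambdabold_\Phibold^\star-\lambdabold^\star)$ and the remainder is Lipschitz in the primal error, so inverting the transposed Jacobian gives $\norm{\lambdabold^\star-\Phibold\hat\lambdabold_\Phibold^\star}\le C(\norm{\rlam(\Phibold\hat\lambdabold_\Phibold^\star,\Phibold\hat\ybm_\Phibold^\star,\mubold)}+\norm{\rbm(\Phibold\hat\ybm_\Phibold^\star,\mubold)})$, which supplies $c_2'$ after folding the extra primal term into $c_1'$. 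The second difference is treated identically at the hyperreduced level: inserting $\tilde\gbm_\Phibold^\lambda(\hat\lambdabold_\Phibold^\star,\hat\ybm_\Phibold^\star,\mubold;\rhobold)$ isolates the reconstruction discrepancy $\hat\gbm_\Phibold^\lambda-\tilde\gbm_\Phibold^\lambda$ (the final listed term), and the remaining solution differences are bounded by the hyperreduced primal and adjoint residuals, giving $c_3'$ and $c_4'$.

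The main obstacle is the adjoint-error estimate and its hyperreduced counterpart. Because the adjoint equation's operator $\partial_\ubm\rbm^T$ and right-hand side $\partial_\ubm j^T$ both depend on the primal state, $\norm{\lambdabold^\star-\Phibold\hat\lambdabold_\Phibold^\star}$ is not controlled by the adjoint residual alone but inherits a primal-error contribution; the delicate step is to separate this contribution cleanly---exploiting Lipschitz continuity of $\partial_\ubm\rbm$ and $\partial_\ubm j$ and the uniform bound on $\norm{\lambdabold^\star}$---so that it is absorbed into the primal-residual coefficient rather than leaving an uncontrolled $\norm{\ubm^\star-\Phibold\hat\ybm_\Phibold^\star}$ term. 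One must also verify that the constants produced at every step can be chosen independently of $\mubold$ and $\rhobold$, which is precisely where the uniformity built into Assumptions~\ref{assum:hdm}--\ref{assum:eqp} is essential.
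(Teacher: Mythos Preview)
Your proposal is correct and follows essentially the same approach as the paper: both telescope through the reduced model by inserting $\hat{j}_\Phibold(\hat\ybm_\Phibold^\star,\mubold)$, $\tilde{j}_\Phibold(\hat\ybm_\Phibold^\star,\mubold;\rhobold)$ (and the analogous adjoint/gradient quantities), and then convert the resulting state and adjoint perturbations into residual norms via Jacobian invertibility and Lipschitz continuity. The only cosmetic difference is that the paper packages the residual-based state/adjoint/output perturbation estimates into two standalone lemmas (Lemmas~\ref{lem:hdm_diff} and~\ref{lem:eqp_diff}, in turn citing Propositions~A.1--A.2 of \cite{zahr_efficient_2019}), whereas you sketch those bounds inline via the fundamental theorem of calculus and the bounded-inverse assumption~\ref{AH8}/\ref{AR9}.
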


\begin{remark}
 Residual-based error estimates such as the one in Theorem~\ref{the:qoi_grad_errbnd} are rarely useful in practice because the constants
 multiplying each term are usually difficult to compute or estimate in practice, and even if these constants can be computed,
 the bounds tend to have low effectivity. However, in the optimization setting, we only require \textit{asymptotic bounds}
 where the constants in the bounds must exist but do not need to be computed, which makes residual-based estimates useful
 in this setting (Section~\ref{sec:trammo:eqp_tr}).
\end{remark}

Now we specialize the results in Theorem~\ref{the:qoi_grad_errbnd} with requirements on the training procedure for $\Phibold$ and $\rhobold$.
In particular, if the HDM primal and adjoint solution at a given $\mubold\in\Dcal$ are included in the column space of the
reduced basis, then the first term in (\ref{eqn:eqp:qoi_errbnd}) and the first two terms in (\ref{eqn:eqp:qoi_grad_errbnd}) are zero. All remaining terms are the difference
between the reduced and hyperreduced quantity evaluated at the reduced state $\hat\ybm_\Phibold^\star$, which are
exactly controlled by the EQP constraints in Section~\ref{sec:hyperreduction:eqp_training}, which leads to the following corollaries.

\begin{corollary}\label{cor:qoi_errbnd}
Suppose Assumptions~\ref{assum:hdm}-\ref{assum:eqp} hold with $\Rcal\supset\Ccal_{\Phibold,\Xibold,\delta_\mathtt{rp}}^\mathtt{rp} \cap \Ccal_{\Phibold,\Xibold,\delta_\mathtt{q}}^\mathtt{q}$ and consider any $\mubold\in\Dcal$. Then, if the reduced basis
$\Phibold\in\Rbb^{N_\ubm\times n}$ satisfies
\begin{equation}\label{eqn:prim_dual_basis}
 \ubm^\star(\mubold) \in \mathrm{Ran}~\Phibold, \qquad
 \lambdabold^\star(\mubold) \in \mathrm{Ran}~\Phibold,
\end{equation}
and the weight vector $\rhobold$ is the solution of (\ref{eqn:eqp:linprog}) with constraint set
$\Ccal_{\Phibold,\Xibold,\deltabold} \subseteq \Ccal_{\Phibold,\Xibold,\delta_\mathtt{rp}}^\mathtt{rp} \cap \Ccal_{\Phibold,\Xibold,\delta_\mathtt{q}}^\mathtt{q}$ and EQP training set $\Xibold\subset\Dcal$ with $\mubold\in\Xibold$, there
exist constants $c_2 > 0$ (independent of $\mubold$) such that
\begin{equation} \label{eqn:eqp:qoi_errbnd2}
 |f(\mubold) - \tilde{f}_\Phibold(\mubold;\rhobold)| \leq c_2 \delta_\mathtt{rp} + \delta_\mathtt{q}.
\end{equation}
\begin{proof}
See~\ref{sec:appendix_B}.
\end{proof}
\end{corollary}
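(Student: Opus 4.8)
The plan is to invoke the residual-based bound (\ref{eqn:eqp:qoi_errbnd}) of Theorem~\ref{the:qoi_grad_errbnd} at the specific pair $(\mubold,\rhobold)$ and show that its three right-hand-side contributions collapse to $c_2\delta_\mathtt{rp}+\delta_\mathtt{q}$. First I would verify applicability: the hypothesis $\Rcal\supset\Ccal_{\Phibold,\Xibold,\delta_\mathtt{rp}}^\mathtt{rp}\cap\Ccal_{\Phibold,\Xibold,\delta_\mathtt{q}}^\mathtt{q}$ together with $\Ccal_{\Phibold,\Xibold,\deltabold}\subseteq\Ccal_{\Phibold,\Xibold,\delta_\mathtt{rp}}^\mathtt{rp}\cap\Ccal_{\Phibold,\Xibold,\delta_\mathtt{q}}^\mathtt{q}$ guarantees that the minimizer $\rhobold$ of (\ref{eqn:eqp:linprog}) lies in $\Rcal$, so Theorem~\ref{the:qoi_grad_errbnd} holds at $(\mubold,\rhobold)$, and simultaneously that $\rhobold$ satisfies both the primal-residual and quantity-of-interest accuracy constraints.

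The key step is to show the first term $\norm{\rbm(\Phibold\hat\ybm_\Phibold^\star(\mubold),\mubold)}$ vanishes. Because $\ubm^\star(\mubold)\in\mathrm{Ran}~\Phibold$, there exists $\ybm_0\in\Rbb^n$ with $\Phibold\ybm_0=\ubm^\star(\mubold)$; substituting into the reduced residual (\ref{eqn:rom:res}) and using $\rbm(\ubm^\star(\mubold),\mubold)=\zerobold$ gives $\hat\rbm_\Phibold(\ybm_0,\mubold)=\Phibold^T\rbm(\ubm^\star(\mubold),\mubold)=\zerobold$, so $\ybm_0$ solves the Galerkin ROM (\ref{eqn:rom:res_eqn}). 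By the uniqueness of the reduced primal solution assumed in Section~\ref{sec:hyperreduction:rom}, $\hat\ybm_\Phibold^\star(\mubold)=\ybm_0$, hence $\Phibold\hat\ybm_\Phibold^\star(\mubold)=\ubm^\star(\mubold)$ and $\rbm(\Phibold\hat\ybm_\Phibold^\star(\mubold),\mubold)=\zerobold$. This Galerkin reproduction property is the crux of the corollary and is precisely where the primal-subspace hypothesis (\ref{eqn:prim_dual_basis}) enters; I would note that the adjoint condition $\lambdabold^\star(\mubold)\in\mathrm{Ran}~\Phibold$ is \emph{not} needed for the quantity-of-interest bound, since no adjoint term appears in (\ref{eqn:eqp:qoi_errbnd}) (it is required only for the companion gradient estimate).

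For the two remaining terms I would invoke the simplified constraint sets of Remark~\ref{rem:eqp:rescon_simple}. Since $\mubold\in\Xibold$ and $\rhobold\in\Ccal_{\Phibold,\Xibold,\delta_\mathtt{rp}}^\mathtt{rp}$, the primal-residual constraint yields $\norm{\tilde\rbm_\Phibold(\hat\ybm_\Phibold^\star(\mubold),\mubold;\rhobold)}_\infty\leq\delta_\mathtt{rp}$; by equivalence of norms on $\Rbb^n$ this controls the norm appearing in (\ref{eqn:eqp:qoi_errbnd}) up to a fixed constant, which I absorb into $c_2$ (still independent of $\mubold$, since both the constant furnished by Theorem~\ref{the:qoi_grad_errbnd} and the norm-equivalence constant are $\mubold$-independent). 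Likewise, $\rhobold\in\Ccal_{\Phibold,\Xibold,\delta_\mathtt{q}}^\mathtt{q}$ with $\mubold\in\Xibold$ bounds the scalar quantity-of-interest difference by $\delta_\mathtt{q}$, where for a scalar the infinity norm is just the absolute value so the coefficient is exactly one. Assembling the three contributions gives $\abs{f(\mubold)-\tilde{f}_\Phibold(\mubold;\rhobold)}\leq c_2\delta_\mathtt{rp}+\delta_\mathtt{q}$.

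Because the result is essentially a careful specialization of Theorem~\ref{the:qoi_grad_errbnd} rather than a new argument, I do not anticipate a serious obstacle; the only point requiring genuine care is the reproduction argument, whose validity hinges entirely on the assumed uniqueness of the reduced primal solution. The one bookkeeping check I would make explicit is that $\hat{j}$ in (\ref{eqn:eqp:qoi_errbnd}), evaluated at $\hat\ybm_\Phibold^\star(\mubold)$, is identified with $\hat{j}_\Phibold(\hat\ybm_\Phibold^\star(\mubold),\mubold)$ from the definition in the $\Ccal_{\Phibold,\Xibold,\delta_\mathtt{q}}^\mathtt{q}$ constraint, so that the third term is bounded by exactly $\delta_\mathtt{q}$ with unit coefficient.
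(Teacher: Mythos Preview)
Your proposal is correct and follows essentially the same approach as the paper: invoke the bound (\ref{eqn:eqp:qoi_errbnd}) from Theorem~\ref{the:qoi_grad_errbnd}, use $\ubm^\star(\mubold)\in\mathrm{Ran}~\Phibold$ together with uniqueness of the reduced primal solution to kill the first term, and bound the remaining two terms by $\delta_\mathtt{rp}$ and $\delta_\mathtt{q}$ via the EQP constraints at $\mubold\in\Xibold$. Your write-up is in fact more careful than the paper's in two respects: you spell out the Galerkin reproduction argument explicitly, and you correctly observe that the adjoint hypothesis $\lambdabold^\star(\mubold)\in\mathrm{Ran}~\Phibold$ in (\ref{eqn:prim_dual_basis}) is not actually used for this corollary.
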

 
\begin{corollary}\label{cor:qoi_grad_errbnd}
Suppose Assumptions~\ref{assum:hdm}-\ref{assum:eqp} hold with $\Rcal\supset\Ccal_{\Phibold,\Xibold,\delta_\mathtt{rp}}^\mathtt{rp} \cap \Ccal_{\Phibold,\Xibold,\delta_\mathtt{ra}}^\mathtt{ra} \cap \Ccal_{\Phibold,\Xibold,\delta_\mathtt{ga}}^\mathtt{ga}$ and consider any $\mubold\in\Dcal$.
Then, if the reduced basis
$\Phibold\in\Rbb^{N_\ubm\times n}$ satisfies (\ref{eqn:prim_dual_basis}) and the  weight vector is the solution of (\ref{eqn:eqp:linprog}) with constraint set
$\Ccal_{\Phibold,\Xibold,\deltabold} \subseteq \Ccal_{\Phibold,\Xibold,\delta_\mathtt{rp}}^\mathtt{rp} \cap \Ccal_{\Phibold,\Xibold,\delta_\mathtt{ra}}^\mathtt{ra} \cap \Ccal_{\Phibold,\Xibold,\delta_\mathtt{ga}}^\mathtt{ga}$ and EQP training set $\Xibold\subset\Dcal$ with $\mubold\in\Xibold$, there exist constants
$c_3',c_4'>0$ (independent of $\mubold$) such that
\begin{equation} \label{eqn:eqp:qoi_grad_errbnd2}
 \norm{\nabla f(\mubold) - \nabla\tilde{f}_\Phibold(\mubold;\rhobold)} \leq
 c_3'\delta_\mathtt{rp} + c_4'\delta_\mathtt{ra} + \delta_\mathtt{ga}.
\end{equation}
\begin{proof}
See~\ref{sec:appendix_B}.
\end{proof}
\end{corollary}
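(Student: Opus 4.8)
The plan is to invoke Theorem~\ref{the:qoi_grad_errbnd} to reduce everything to the five residual-type terms in \eqref{eqn:eqp:qoi_grad_errbnd}, and then to argue that the reproduction hypothesis \eqref{eqn:prim_dual_basis} annihilates the two HDM-residual terms while the EQP feasibility of $\rhobold$ controls the three remaining reduced-versus-hyperreduced terms. Concretely, \eqref{eqn:eqp:qoi_grad_errbnd} splits $\norm{\nabla f(\mubold)-\nabla\tilde{f}_\Phibold(\mubold;\rhobold)}$ into (i) $\norm{\rbm(\Phibold\hat\ybm_\Phibold^\star(\mubold),\mubold)}$, (ii) $\norm{\rbm^\lambda(\Phibold\hat\lambdabold_\Phibold^\star(\mubold),\Phibold\hat\ybm_\Phibold^\star(\mubold),\mubold)}$, (iii) the hyperreduced primal residual, (iv) the hyperreduced adjoint residual, and (v) the gradient-reconstruction mismatch; I would treat (i)--(ii) and (iii)--(v) separately.

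First I would prove the primal reproduction identity $\Phibold\hat\ybm_\Phibold^\star(\mubold)=\ubm^\star(\mubold)$. Since $\ubm^\star(\mubold)\in\mathrm{Ran}~\Phibold$, pick $\ybm\in\Rbb^n$ with $\Phibold\ybm=\ubm^\star(\mubold)$; by \eqref{eqn:rom:res} and \eqref{eqn:hdm:res_eqn}, $\hat\rbm_\Phibold(\ybm,\mubold)=\Phibold^T\rbm(\ubm^\star(\mubold),\mubold)=\zerobold$, so $\ybm$ solves the Galerkin ROM \eqref{eqn:rom:res_eqn}; the assumed uniqueness of the reduced primal solution forces $\hat\ybm_\Phibold^\star(\mubold)=\ybm$, giving $\Phibold\hat\ybm_\Phibold^\star(\mubold)=\ubm^\star(\mubold)$ and hence term (i) $=\norm{\rbm(\ubm^\star(\mubold),\mubold)}=0$. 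I would then transfer the same argument to the adjoint: using \eqref{eqn:rom:res_adj} together with the primal identity just obtained, the reduced adjoint equation becomes $\Phibold^T\rbm^\lambda(\Phibold\hat\lambdabold_\Phibold^\star(\mubold),\ubm^\star(\mubold),\mubold)=\zerobold$; choosing $\zbm$ with $\Phibold\zbm=\lambdabold^\star(\mubold)$ (available by \eqref{eqn:prim_dual_basis}) and using the HDM adjoint equation \eqref{pde:eqn:res_sys_adj} shows $\zbm$ solves the reduced adjoint problem, so uniqueness yields $\Phibold\hat\lambdabold_\Phibold^\star(\mubold)=\lambdabold^\star(\mubold)$ and term (ii) vanishes as well.

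It then remains to bound (iii)--(v), which is where the hypotheses $\mubold\in\Xibold$ and $\rhobold\in\Ccal_{\Phibold,\Xibold,\delta_\mathtt{rp}}^\mathtt{rp}\cap\Ccal_{\Phibold,\Xibold,\delta_\mathtt{ra}}^\mathtt{ra}\cap\Ccal_{\Phibold,\Xibold,\delta_\mathtt{ga}}^\mathtt{ga}$ enter. By the simplified constraint forms in Remark~\ref{rem:eqp:rescon_simple}, feasibility gives $\norm{\tilde\rbm_\Phibold(\hat\ybm_\Phibold^\star(\mubold),\mubold;\rhobold)}_\infty\le\delta_\mathtt{rp}$ (term iii) and $\norm{\tilde\rbm_\Phibold^\lambda(\hat\lambdabold_\Phibold^\star(\mubold),\hat\ybm_\Phibold^\star(\mubold),\mubold;\rhobold)}_\infty\le\delta_\mathtt{ra}$ (term iv), while the definition of $\Ccal^\mathtt{ga}$ bounds the gradient-reconstruction mismatch (term v) by $\delta_\mathtt{ga}$ in the infinity norm, each at the sampled $\mubold$. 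Substituting into the surviving part of \eqref{eqn:eqp:qoi_grad_errbnd} and absorbing the fixed norm-equivalence factors (the underlying spaces $\Rbb^n$ and $\Rbb^{N_\mubold}$ have dimension independent of $\mubold$) into the Theorem constants yields \eqref{eqn:eqp:qoi_grad_errbnd2}; the unit coefficient on $\delta_\mathtt{ga}$ is inherited directly from the unit coefficient on term (v) in Theorem~\ref{the:qoi_grad_errbnd} once the infinity norm is used consistently there.

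I expect the adjoint reproduction step to be the main obstacle. Unlike the primal case it is not self-contained: the reduced adjoint operator linearizes $\rbm$ and $j$ at the \emph{reduced} primal state, so only after establishing $\Phibold\hat\ybm_\Phibold^\star(\mubold)=\ubm^\star(\mubold)$ do the HDM and reduced adjoint problems share the same Jacobian and forcing, which is exactly what lets the uniqueness argument carry over. A secondary point to handle with care is that $c_3',c_4'$ are asserted only to exist and to be independent of $\mubold$ (they may differ from the constants of Theorem~\ref{the:qoi_grad_errbnd} by dimension-dependent norm-equivalence factors), consistent with the asymptotic-bound viewpoint stressed after Theorem~\ref{the:qoi_grad_errbnd} and needed for the trust-region analysis in Section~\ref{sec:trammo:eqp_tr}.
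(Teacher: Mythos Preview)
Your proposal is correct and follows essentially the same approach as the paper: invoke Theorem~\ref{the:qoi_grad_errbnd}, use the range hypothesis \eqref{eqn:prim_dual_basis} to kill the two HDM-residual terms via exact ROM reproduction, and bound the remaining three terms by the EQP constraint tolerances. You have in fact been more careful than the paper, which simply asserts that ``the ROM will recover the exact primal and adjoint solutions'' without spelling out the uniqueness argument or the order dependency (primal first, then adjoint) that you correctly identify; the paper also does not explicitly address the $\norm{\cdot}_\infty$ versus $\norm{\cdot}$ mismatch that you handle via norm equivalence.
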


\begin{corollary} \label{cor:both_errbnd}
Suppose Assumptions~\ref{assum:hdm}-\ref{assum:eqp} hold with $\Rcal\supset\Ccal_{\Phibold,\Xibold,\delta_\mathtt{rp}}^\mathtt{rp} \cap \Ccal_{\Phibold,\Xibold,\delta_\mathtt{ra}}^\mathtt{ra} \cap \Ccal_{\Phibold,\Xibold,\delta_\mathtt{ga}}^\mathtt{ga} \cap \Ccal_{\Phibold,\Xibold,\delta_\mathtt{q}}^\mathtt{q}$
and consider any $\mubold\in\Dcal$. Then, if the reduced basis
$\Phibold\in\Rbb^{N_\ubm\times n}$ satisfies (\ref{eqn:prim_dual_basis}) and the weight vector is the solution of (\ref{eqn:eqp:linprog}) with constraint set
$\Ccal_{\Phibold,\Xibold,\deltabold} \subseteq \Ccal_{\Phibold,\Xibold,\delta_\mathtt{rp}}^\mathtt{rp} \cap \Ccal_{\Phibold,\Xibold,\delta_\mathtt{ra}}^\mathtt{ra} \cap \Ccal_{\Phibold,\Xibold,\delta_\mathtt{ga}}^\mathtt{ga} \cap \Ccal_{\Phibold,\Xibold,\delta_\mathtt{q}}^\mathtt{q}$ and EQP training set $\Xibold\subset\Dcal$ with $\mubold\in\Xibold$, then both (\ref{eqn:eqp:qoi_errbnd2}) and (\ref{eqn:eqp:qoi_grad_errbnd2}) hold.
\begin{proof}
The proof of this corollary directly follows Corollaries~\ref{cor:qoi_errbnd} and ~\ref{cor:qoi_grad_errbnd}
\end{proof}
\end{corollary}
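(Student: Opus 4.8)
The plan is to observe that the hypotheses of the combined corollary simultaneously supply everything needed to reproduce the arguments behind both Corollary~\ref{cor:qoi_errbnd} and Corollary~\ref{cor:qoi_grad_errbnd}, so that the two bounds (\ref{eqn:eqp:qoi_errbnd2}) and (\ref{eqn:eqp:qoi_grad_errbnd2}) fall out with no new estimate. The structural fact driving everything is monotonicity of intersections: since $\rhobold\in\Ccal_{\Phibold,\Xibold,\deltabold}$ and
\begin{equation*}
 \Ccal_{\Phibold,\Xibold,\deltabold} \subseteq \Ccal_{\Phibold,\Xibold,\delta_\mathtt{rp}}^\mathtt{rp} \cap \Ccal_{\Phibold,\Xibold,\delta_\mathtt{ra}}^\mathtt{ra} \cap \Ccal_{\Phibold,\Xibold,\delta_\mathtt{ga}}^\mathtt{ga} \cap \Ccal_{\Phibold,\Xibold,\delta_\mathtt{q}}^\mathtt{q},
\end{equation*}
dropping the $\mathtt{ra},\mathtt{ga}$ factors gives $\rhobold\in\Ccal_{\Phibold,\Xibold,\delta_\mathtt{rp}}^\mathtt{rp}\cap\Ccal_{\Phibold,\Xibold,\delta_\mathtt{q}}^\mathtt{q}$, while dropping the $\mathtt{q}$ factor gives $\rhobold\in\Ccal_{\Phibold,\Xibold,\delta_\mathtt{rp}}^\mathtt{rp}\cap\Ccal_{\Phibold,\Xibold,\delta_\mathtt{ra}}^\mathtt{ra}\cap\Ccal_{\Phibold,\Xibold,\delta_\mathtt{ga}}^\mathtt{ga}$. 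Thus the single linear-program solution $\rhobold$ satisfies the primal-residual and quantity-of-interest constraints needed for the QoI bound \emph{and} the primal-residual, adjoint-residual, and gradient-reconstruction constraints needed for the gradient bound.

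First I would confirm that Theorem~\ref{the:qoi_grad_errbnd} applies at this particular $\rhobold$, i.e., that $\rhobold\in\Rcal$. This is immediate from the admissibility hypothesis $\Rcal\supset\Ccal_{\Phibold,\Xibold,\delta_\mathtt{rp}}^\mathtt{rp}\cap\Ccal_{\Phibold,\Xibold,\delta_\mathtt{ra}}^\mathtt{ra}\cap\Ccal_{\Phibold,\Xibold,\delta_\mathtt{ga}}^\mathtt{ga}\cap\Ccal_{\Phibold,\Xibold,\delta_\mathtt{q}}^\mathtt{q}$ together with the containment of $\Ccal_{\Phibold,\Xibold,\deltabold}$ in that same four-way intersection, so both inequalities of Theorem~\ref{the:qoi_grad_errbnd} are valid at $\rhobold$. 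Next I would invoke the basis condition (\ref{eqn:prim_dual_basis}): because $\ubm^\star(\mubold)\in\mathrm{Ran}\,\Phibold$ and $\lambdabold^\star(\mubold)\in\mathrm{Ran}\,\Phibold$, uniqueness of the reduced primal and adjoint solutions forces $\Phibold\hat\ybm_\Phibold^\star(\mubold)=\ubm^\star(\mubold)$ and $\Phibold\hat\lambdabold_\Phibold^\star(\mubold)=\lambdabold^\star(\mubold)$, whence $\rbm(\Phibold\hat\ybm_\Phibold^\star(\mubold),\mubold)=\zerobold$ and $\rbm^\lambda(\Phibold\hat\lambdabold_\Phibold^\star(\mubold),\Phibold\hat\ybm_\Phibold^\star(\mubold),\mubold)=\zerobold$. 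This annihilates the first term of (\ref{eqn:eqp:qoi_errbnd}) and the first two terms of (\ref{eqn:eqp:qoi_grad_errbnd}), exactly as in the two preceding corollaries.

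It then remains to bound the surviving terms via the constraint memberships just established. For (\ref{eqn:eqp:qoi_errbnd2}) the $\mathtt{rp}$ constraint bounds $\norm{\tilde\rbm_\Phibold(\hat\ybm_\Phibold^\star(\mubold),\mubold;\rhobold)}$ by $\delta_\mathtt{rp}$ and the $\mathtt{q}$ constraint bounds the quantity-of-interest discrepancy by $\delta_\mathtt{q}$; for (\ref{eqn:eqp:qoi_grad_errbnd2}) the $\mathtt{rp}$, $\mathtt{ra}$, and $\mathtt{ga}$ constraints respectively control the hyperreduced primal residual, the hyperreduced adjoint residual, and the gradient-reconstruction discrepancy. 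Substituting into the two inequalities of Theorem~\ref{the:qoi_grad_errbnd} reproduces (\ref{eqn:eqp:qoi_errbnd2}) and (\ref{eqn:eqp:qoi_grad_errbnd2}) with the same constants $c_2$ and $c_3',c_4'$. The only point requiring care---and the nearest thing to an obstacle---is that the individual corollaries carry the \emph{weaker} admissibility hypotheses $\Rcal\supset\Ccal_{\Phibold,\Xibold,\delta_\mathtt{rp}}^\mathtt{rp}\cap\Ccal_{\Phibold,\Xibold,\delta_\mathtt{q}}^\mathtt{q}$ and $\Rcal\supset\Ccal_{\Phibold,\Xibold,\delta_\mathtt{rp}}^\mathtt{rp}\cap\Ccal_{\Phibold,\Xibold,\delta_\mathtt{ra}}^\mathtt{ra}\cap\Ccal_{\Phibold,\Xibold,\delta_\mathtt{ga}}^\mathtt{ga}$, neither of which is literally implied by the four-way containment here; so they cannot be cited as verbatim black boxes. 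Instead one re-runs their identical arguments, whose sole use of the admissibility hypothesis is to guarantee $\rhobold\in\Rcal$, a fact already verified above. With that caveat the result follows directly.
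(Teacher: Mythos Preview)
Your proposal is correct and follows the same approach as the paper, which simply states that the result follows directly from Corollaries~\ref{cor:qoi_errbnd} and~\ref{cor:qoi_grad_errbnd}. Your treatment is in fact more careful than the paper's one-line proof, as you explicitly address the subtlety that the four-way admissibility hypothesis on $\Rcal$ does not literally imply the weaker hypotheses of the individual corollaries, and correctly resolve it by observing that only $\rhobold\in\Rcal$ is actually needed.
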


\subsection{Application: shape optimization}
\label{sec:hyperreduction:shapeopt}
To close this section, we return to the shape optimization setting. Because we have cast the shape optimization problem
as the generic form of the discretized PDE (\ref{eqn:hdm:res_eqn}), all reduced and hyperreduced terms follow accordingly. However, as written,
the mesh motion is a potential bottleneck in the reduced workflow because it requires high-dimensional operations. To see
this,  we write the unassembled residual and quantity of interest
\begin{equation}
 \tilde\rbm_\Phibold(\tilde\ybm,\mubold;\rhobold) = \sum_{e=1}^{N_\mathtt{e}}\rho_e \Phibold_e^T\Rbm_e(\Phibold_e\tilde\ybm,\Phibold_e'\tilde\ybm,\phibold_e(\mubold)), \qquad
 \tilde{j}_\Phibold(\tilde\ybm,\mubold;\rhobold) = \sum_{e=1}^{N_\mathtt{e}} \rho_e J_e(\Phibold_e\tilde\ybm,\phibold_e(\mubold))
\end{equation}
from which we can see both terms depend on $\phibold_e(\mubold)$, i.e., for each value of $\mubold$ encountered
online, $\phibold_e(\mubold) = \Qbm_e^T\phibold(\mubold)$ must be computed for each $e\in\{1,\dots,N_\mathtt{e}\}$
where $\rho_e > 0$. Expanding $\phibold(\mubold)$ using (\ref{eqn:hdm:shapeopt:param_mapping}), we have
\begin{equation}
  \phibold_e(\mubold) =  \Abm_e\phibold_\mathtt{o}(\mubold).
\end{equation}
where $\Abm_e \in \Rbb^{N_\xbm^e\times N_\xbm^\mathtt{o}}$ is defined as
\begin{equation}\label{eqn:rom:shapeopt:eval_A}
 \Abm_e = \Qbm_{\mathtt{o},e}^T - \Qbm_{\mathtt{c},e}^T\Kbm_\mathtt{cc}^{-1}\Kbm_\mathtt{co}
\end{equation}
and the rows of $\Qbm_e$ are partitioned into optimized and unconstrained mesh coordinate
degrees of freedom (as described in Section~\ref{sec:pdeopt:shape_opt}) as $\Qbm_e = (\Qbm_{\mathtt{o},e},\Qbm_{\mathtt{c},e})$
with $\Qbm_{\mathtt{o},e}\in\Rbb^{N_\xbm^\mathtt{o}\times N_\xbm^e}$ and
$\Qbm_{\mathtt{c},e}\in\Rbb^{N_\xbm^\mathtt{c}\times N_\xbm^e}$.
The cost and feasibility of evaluating $\Abm_e$ using (\ref{eqn:rom:shapeopt:eval_A}) strongly depends on the size of $N_\xbm^\mathtt{c}$.
If $\Kbm_\mathtt{cc}$ is small enough to be store and factorized, then $\Kbm_\mathtt{cc}^{-1}\Kbm_\mathtt{co}$
can be computed at the cost of a factorization of $\Kbm_\mathtt{cc}$ and $N_\xbm^\mathtt{o}$ forward/backward
substitutions. On the other hand, if $\Kbm_\mathtt{cc}$ is too large for direct solvers to be feasible (often the case
for CFD), then $N_\xbm^\mathtt{o}$ iterative system solves of the form $\Kbm_\mathtt{cc} \vbm = \Kbm_\mathtt{co}$
are required, which can be very expensive depending on the size of $N_\xbm^\mathtt{o}$ and $N_\xbm^\mathtt{c}$.
Often in aerodynamic shape optimization, $N_\xbm^\mathtt{o}$ is proportional to the number of nodes
on the surface being optimized and $N_\xbm^\mathtt{c}$ proportional to the number of nodes in the
fluid mesh not on the surface being optimized. While both of these can be quite large (particularly for turbulent
flows), usually $N_\xbm^\mathtt{c} \gg N_\xbm^\mathtt{o}$.

To avoid this potentially significant cost in the hyperreduced model for shape optimization, we propose an
alternate approach to mesh deformation that uses projection-based model reduction. First, we recast the
equation in (\ref{eqn:hdm:shapeopt:mesh_coord}) as
\begin{equation}
 \Kbm_\mathtt{cc}\xbm_\mathtt{c} = -\Kbm_\mathtt{co}\xbm_\mathtt{o}
\end{equation}
and assume the constrained degrees of freedom can be well-approximated in a low-dimensional subspace
\begin{equation}
 \xbm_\mathtt{c} \approx \hat\xbm_\mathtt{c} \coloneqq \Psibold \hat\varpibold,
\end{equation}
where $\hat\xbm_\mathtt{c}\in\Rbb^{N_\xbm^\mathtt{c}}$ is the subspace approximate to $\xbm_\mathtt{c}$,
$\Psibold\in\Rbb^{N_\xbm^\mathtt{c}\times r}$ is the reduced basis for the constrained mesh coordinate
DoFs with $r \ll N_\xbm^\mathtt{c}$, and $\hat\varpibold\in\Rbb^r$ are the corresponding reduced coordinates.
To close the system, we apply a Galerkin projection to yield a linear system of equations for the reduced coordinates
\begin{equation}\label{eqn:rom:shapeopt:reduc_linelast}
 \hat\Kbm_\mathtt{cc}\varpibold = -\hat\Kbm_\mathtt{co}\xbm_\mathtt{o},
\end{equation}
where $\hat\Kbm_{cc}\in\Rbb^{r\times r}$ and $\hat\Kbm_\mathtt{co}\in\Rbb^{r\times N_\xbm^\mathtt{o}}$ are
the reduced elasticity stiffness matrix terms
\begin{equation}
 \hat\Kbm_\mathtt{cc} = \Psibold^T\Kbm_\mathtt{cc}\Psibold, \qquad
 \hat\Kbm_\mathtt{co} = \Psibold^T\Kbm_\mathtt{co}.
\end{equation}
By solving this system and substituting into (\ref{eqn:rom:shapeopt:reduc_linelast}), we have the expression for the constrained mesh coordinate DoFs
\begin{equation}
 \hat\xbm_\mathtt{c} = -\Psibold \hat\Kbm_\mathtt{cc}^{-1}\hat\Kbm_\mathtt{co}\xbm_\mathtt{o}.
\end{equation}
Finally, we use the subspace approximation $\hat\xbm_\mathtt{c}$ in place of the
true constrained mesh coordinate DoFs $\xbm_\mathtt{c}$ in (\ref{eqn:hdm:shapeopt:mesh_coord}) to define an reduced version of the
mesh motion mapping $\hat\phibold_e : \Dcal \rightarrow \Rbb^{N_\xbm^e}$ with
\begin{equation}
  \hat\phibold_e : \mubold \mapsto \hat\Abm_e\phibold_\mathtt{o}(\mubold),
\end{equation}
where $\hat\Abm_e \in \Rbb^{N_\xbm^e\times N_\xbm^\mathtt{o}}$ is defined as
\begin{equation}
 \hat\Abm_e = \Qbm_{\mathtt{o},e}^T - \Psibold_e\hat\Kbm_{cc}^{-1}\hat\Kbm_{co}
\end{equation}
and $\Psibold_e = \Qbm_{\mathtt{c},e}^T\Psibold$. The reduced stiffness matrix $\hat\Kbm_\mathtt{cc}$
is small ($r \ll N_\xbm^\mathtt{c}$) and can be factorized efficiently, keeping the overall cost
computing $\hat\Abm_e$ low.

The reduced basis $\Psibold$ is constructed once-and-for-all using a simple training strategy that
is effective for a small to moderate number of parameters. The basis is defined by compressing perturbations
of each shape parameter about a nominal configuration $\mubold_0\in\Dcal$ (in this work, we
use the original shape used to initialize the optimization iterations), i.e.,
\begin{equation}\label{eqn:rom:shapeopt:basis_train}
 \Psibold = \mathtt{POD}_{N_\xbm^\mathtt{c}, 2 N_\mubold}^r \left(\begin{bmatrix} \mubold_0 + \epsilon \onebold_1 & \mubold_0 - \epsilon\onebold_1 & \cdots & \mubold_0 + \epsilon \onebold_{N_\mubold} & \mubold_0 - \epsilon\onebold_{N_\mubold} \end{bmatrix}\right),
\end{equation}
where $\mathtt{POD}_{m,n}^k : \Rbb^{m\times n} \rightarrow \Rbb^{m\times k}$ applies the SVD to the argument
(snapshot matrix of size $m\times n$) and extracts the $k$ left singular vectors, and $\epsilon\in\Rbb$ is the
magnitude of the perturbation. The reduced mesh motion is demonstrated in Figures~\ref{fig:rom:shapopt:org_naca}-\ref{rom:fig:reduc_mshmot} using a NACA0012 airfoil parametrized using Bezier curves.
Figures~\ref{fig:rom:shapopt:org_naca} and~\ref{fig:rom:shapopt:msh_perturb} show the original shape and mesh of the airfoil,
and the perturbations used to train the mesh motion basis in (\ref{eqn:rom:shapeopt:basis_train}) with $\epsilon=0.5$ and
$r=35$.
Figure~\ref{rom:fig:reduc_mshmot} shows the training procedure applied to a NACA0012
airfoil, as well as the full and reduced mesh motion when the airfoil is deformed to an RAE2822.
\begin{figure}[H]
  \centering
  \includegraphics[width=0.7\textwidth]{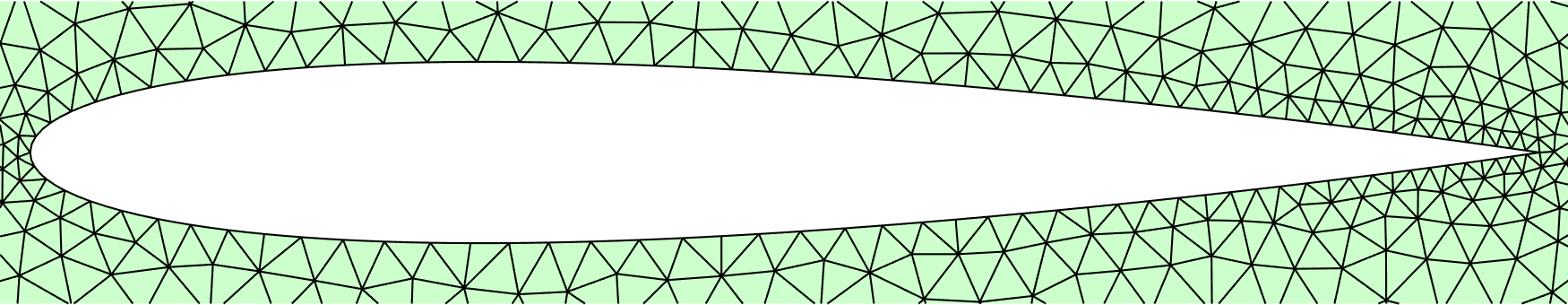}
  \caption{Original shape and mesh of the NACA0012 airfoil}
  \label{fig:rom:shapopt:org_naca}
\end{figure}

\begin{figure}[H]
  \centering
  \begin{minipage}[t]{0.325\textwidth}
      \centering
      \includegraphics[width=\textwidth]{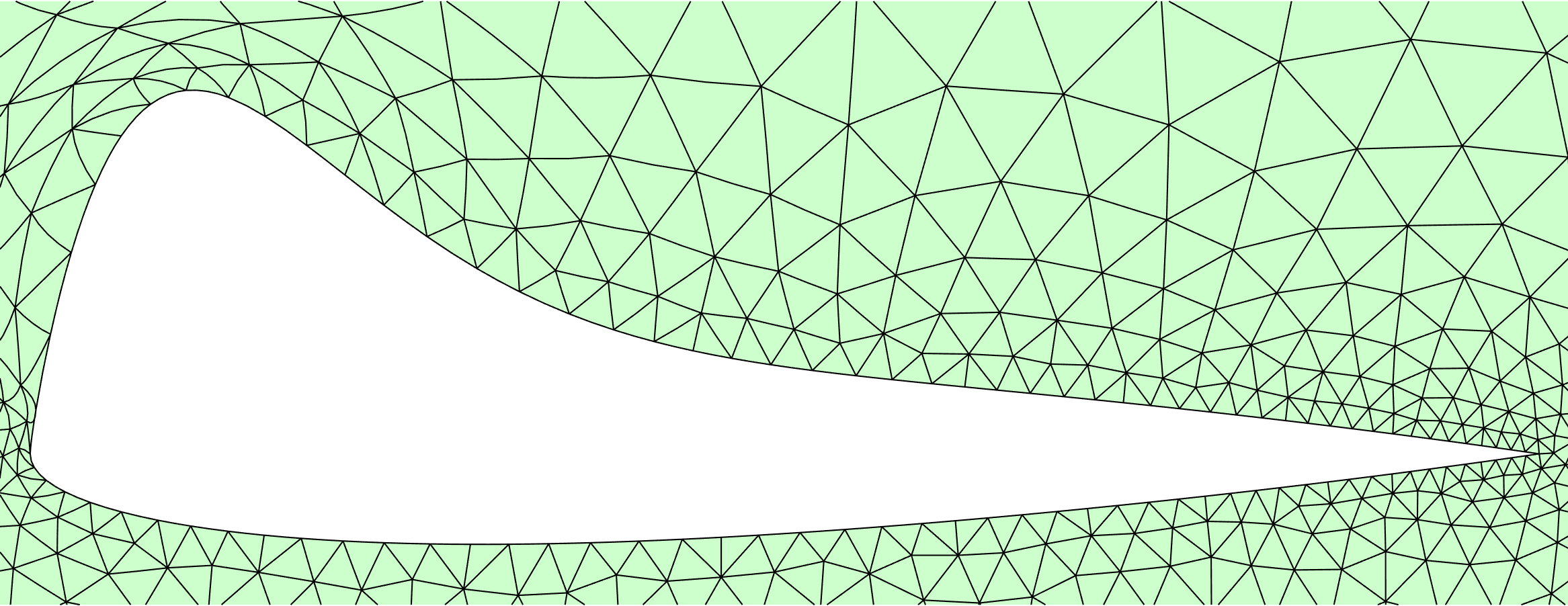}
  \end{minipage}
  \hfill
  \begin{minipage}[t]{0.325\textwidth}
      \centering
      \includegraphics[width=\textwidth]{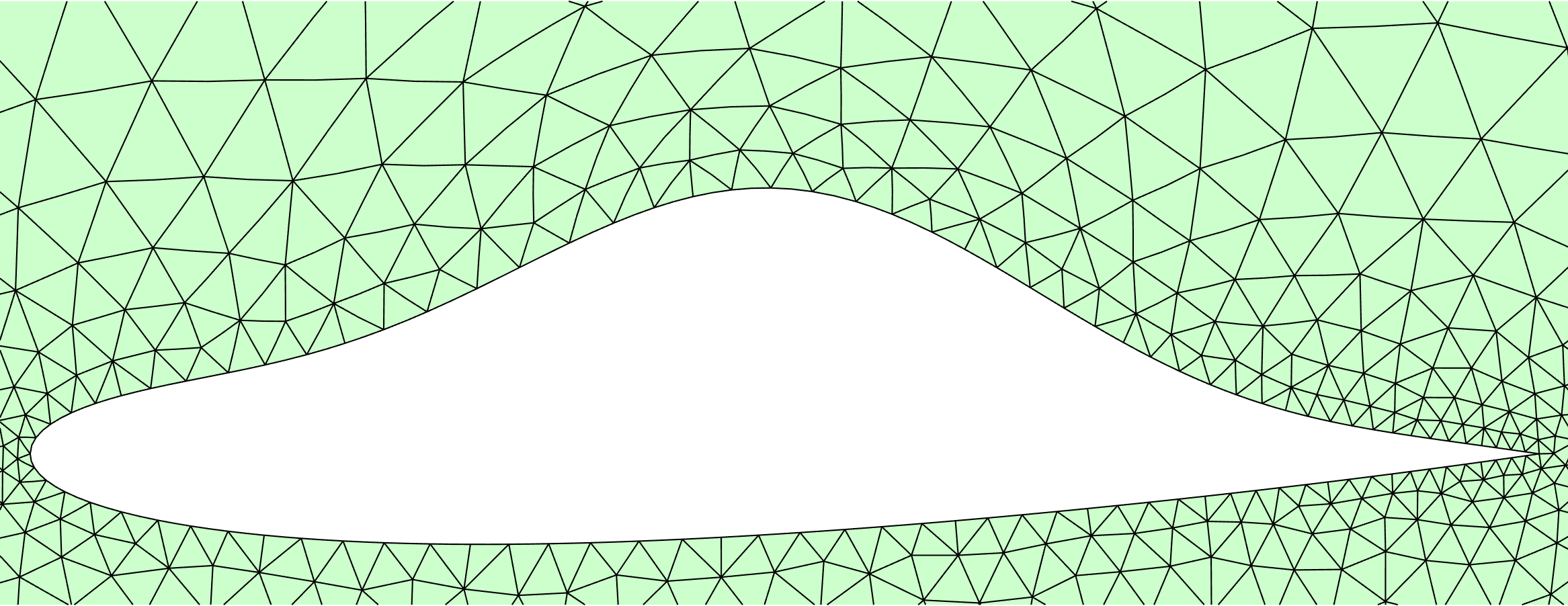}
  \end{minipage}
  \hfill
  \begin{minipage}[t]{0.325\textwidth}
    \centering
    \includegraphics[width=\textwidth]{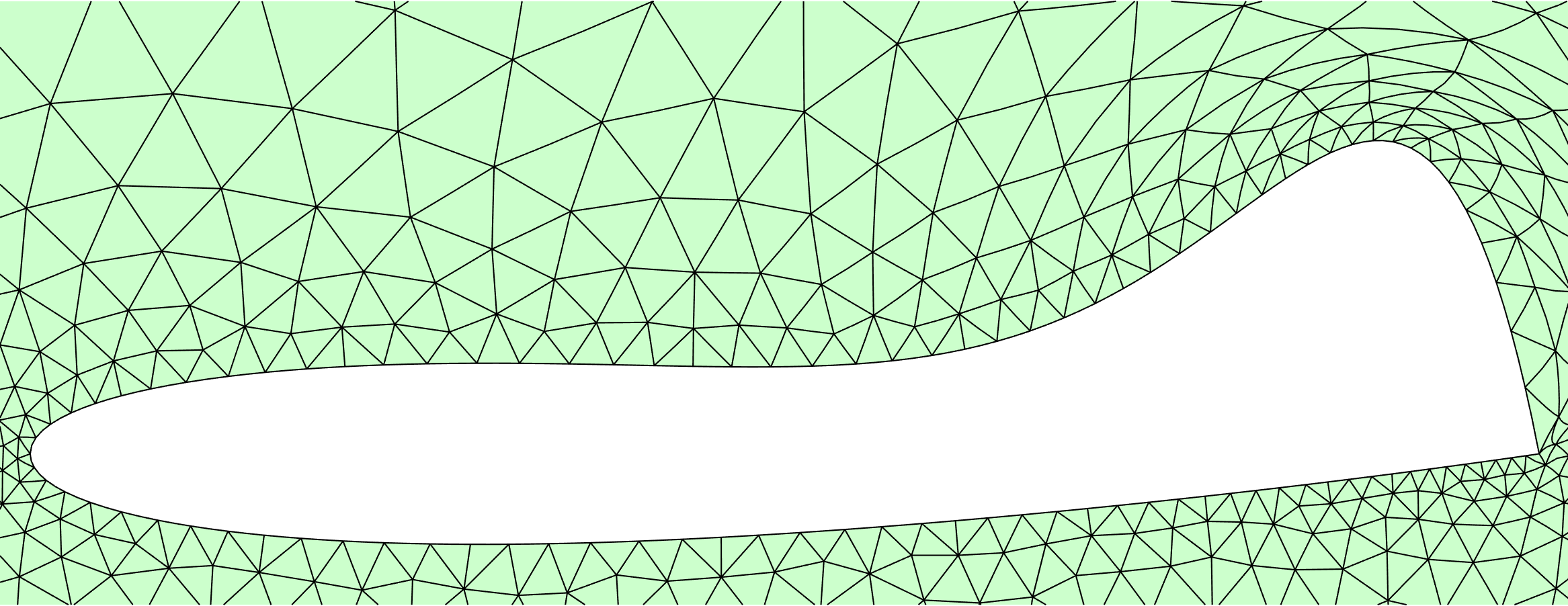}
  \end{minipage}

  \vspace{3.00 mm}

  \begin{minipage}[t]{0.325\textwidth}
    \centering
    \includegraphics[width=\textwidth]{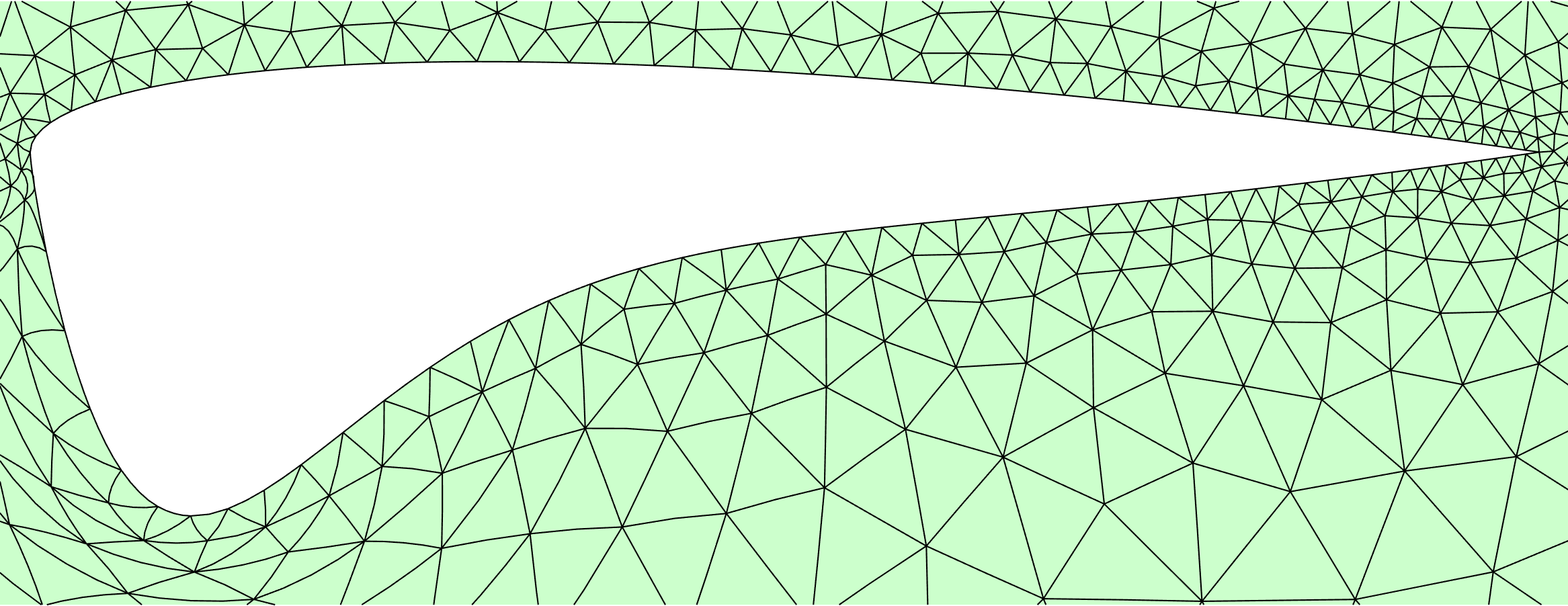}
  \end{minipage}
  \hfill
  \begin{minipage}[t]{0.325\textwidth}
    \centering
    \includegraphics[width=\textwidth]{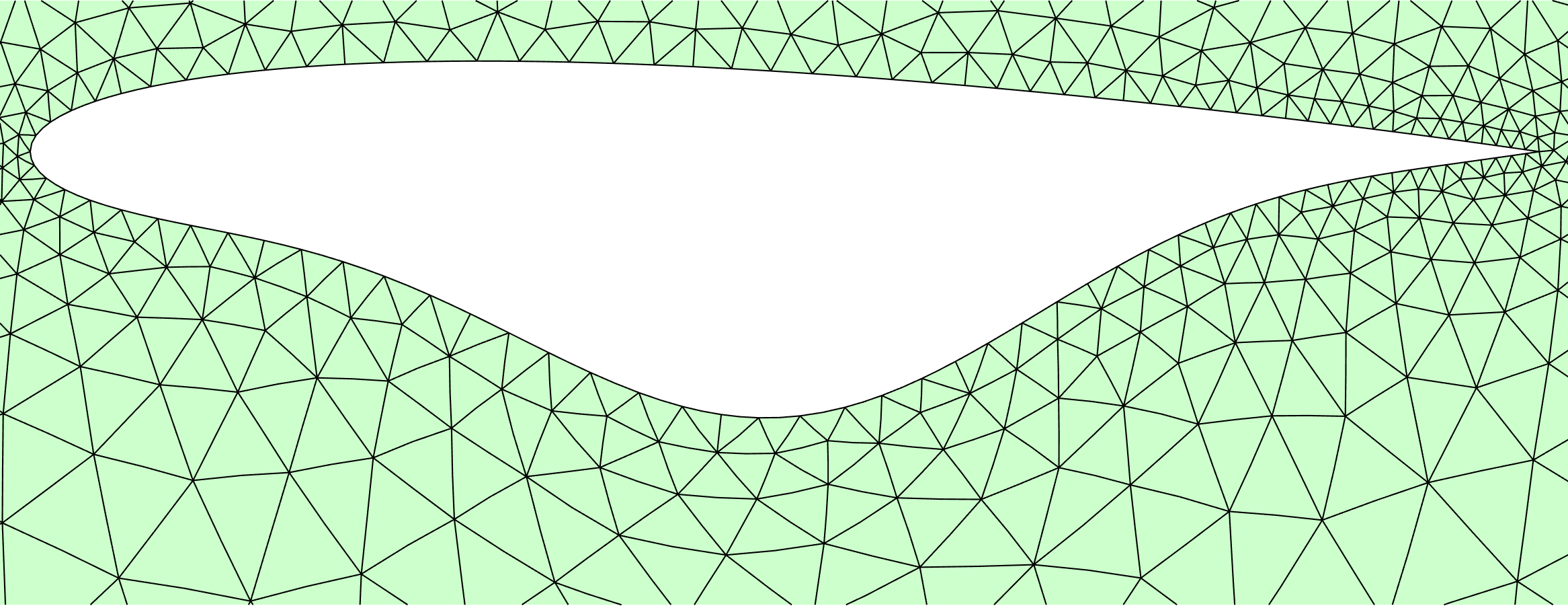}
  \end{minipage}
  \hfill
  \begin{minipage}[t]{0.325\textwidth}
    \centering
    \includegraphics[width=\textwidth]{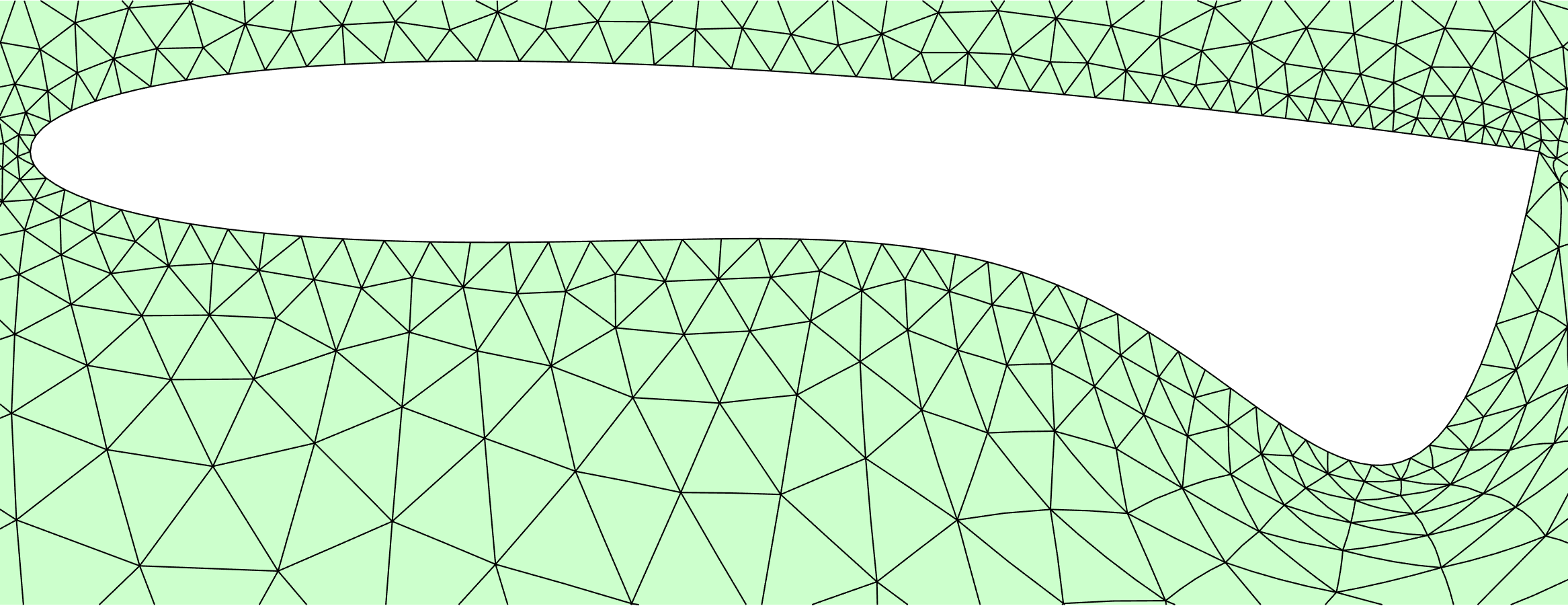}
  \end{minipage}
  \caption{Subset of the perturbations used to train the reduced mesh motion model.}
  \label{fig:rom:shapopt:msh_perturb}
\end{figure}

\begin{figure}[H]
    \centering
    \begin{minipage}[t]{0.49\textwidth}
        \centering
        \includegraphics[width=\textwidth]{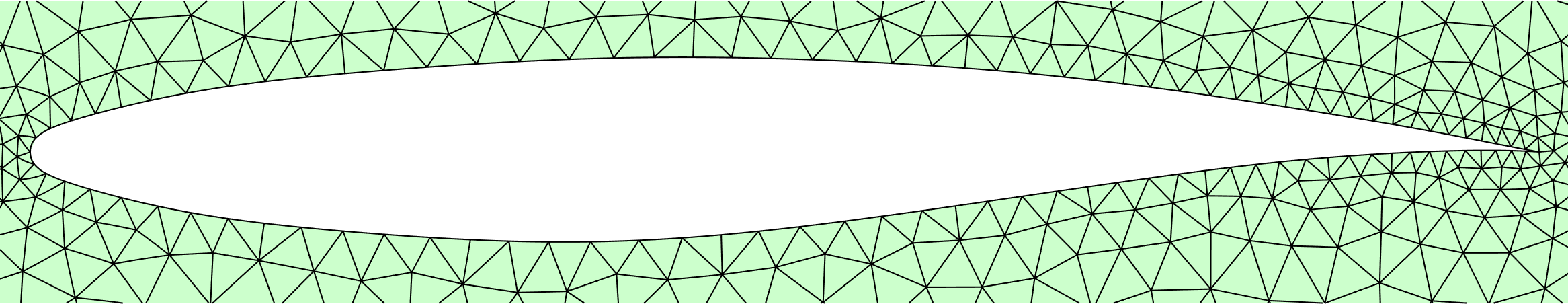}
    \end{minipage}
    \hfill
    \begin{minipage}[t]{0.49\textwidth}
        \centering
        \includegraphics[width=\textwidth]{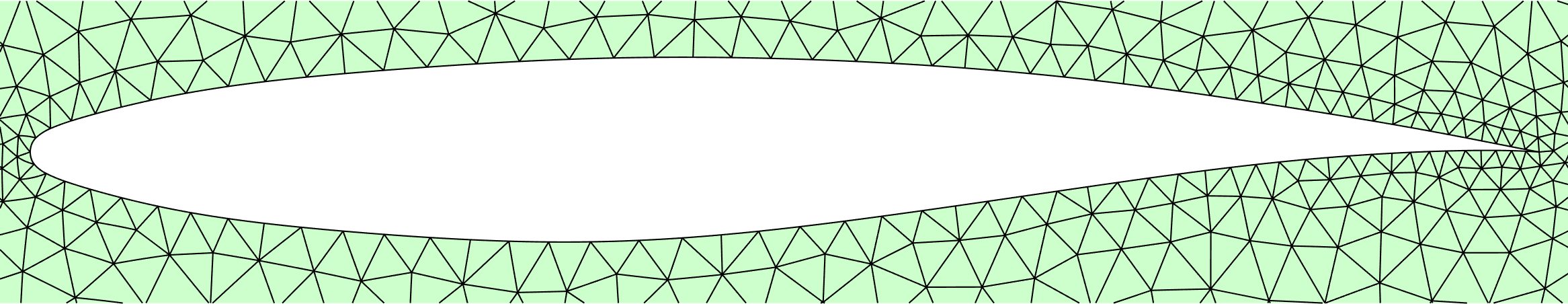}
    \end{minipage}
    \caption{Comparison of original mesh motion obtained using linear elasticity (\textit{left}) and the reduced mesh motion (\textit{right}).}
    \label{rom:fig:reduc_mshmot}
\end{figure}

\begin{remark}
 In general, $\epsilon$ in (\ref{eqn:rom:shapeopt:basis_train}) will depend on the scale of the problem being considered, but should be chosen so
 the resulting deformation introduces non-trivial mesh motion. In this work, we choose $\epsilon=1$ for both examples in
 Section~\ref{sec:numexp}.
\end{remark}

\begin{remark}
Linear, physics-based mesh motion problem is ideally suited for projection-based model reduction for several reasons.
The governing equations are linear and thus significant speedups are easily obtained without hyperreduction. Most importantly,
a reduced-order model is built of $\xbm_\mathtt{c}$ (constrained mesh coordinates DoFs, e.g., nodes away
from surface being optimized), which has \textit{no impact} on the shape of the optimized surface
(completely controlled by $\xbm_\mathtt{o}$). This means the reduced-order model doesn't not need to
perfectly predict $\xbm_\mathtt{c}$ (from full linear elasticity) as long as it leads to meshes that are
well-conditioned.
\end{remark}

\begin{remark}
In the optimization setting, $\hat\Abm_e$ is computed at the first iteration for each element in the entire mesh
because $\rhobold$ will adapt throughout the optimization iterations. Once $\rhobold$ is available at a given
iteration, only the $\hat\Abm_e$ with $\rho_e > 0$ will be used. Furthermore, the basis $\Psibold$ and therefore
the $\{\hat\Abm_e\}_{e=1}^{N_\mathtt{e}}$ terms will not be updated throughout the optimization iteration, which
makes amortizing the initial $2N_\mubold$ unreduced mesh motions required to train $\Psibold$ in (\ref{eqn:rom:shapeopt:basis_train}) trivial.
\end{remark}

\section{Globally convergent trust-region method using hyperreduced models}
\label{sec:trammo}
In this section, we use the hyperreduced models introduced in Section~\ref{sec:hyperreduction} to accelerate the optimization problem
of interest (\ref{eqn:hdm:opt:reduc_space}). To ensure the hyperreduced optimization problem converges to a local minimum of the original
(unreduced) problem, we embed it in trust-region algorithm that allows for models with inexact gradients at trust-region
centers and asymptotic error bounds \cite{kouri_trust-region_2013}. To this end, we introduce the general trust-region method in
which the hyperreduced model will be embedded (Section~\ref{sec:trammo:tr_org}), as well as the proposed approach to train
and leverage hyperreduced models in the trust-region framework to ensure global convergence (Section~\ref{sec:trammo:eqp_tr}).

\subsection{A trust-region method with inexact model gradients and asymptotic error bounds}\label{sec:trammo:tr_org}
Recall the optimization problem in (\ref{eqn:hdm:opt:reduc_space}), where the objective function $f$ satisfies Assumption~\ref{assum:tr_inexact} below.
A trust-region method constructs a sequence of trust-region centers
$\{\mubold_k\}_{k=1}^\infty$ whose limit will be a local solution of (\ref{eqn:hdm:opt:reduc_space}). Trust-region methods
construct a smooth approximation model $m_k : \Dcal \rightarrow \Rbb$ at each trust-region center
such that $m_k(\mubold) \approx f(\mubold)$ for all $\mubold\in\{\mubold\in\Dcal | \norm{\mubold-\mubold_k}\leq \Delta_k\}$,
where $\Delta_k>0$ is the trust-region radius. A candidate step $\check\mubold_k$ is produced by
approximately solving the trust-region subproblem
\begin{equation}\label{eqn:tr:subprob}
 \optconOne{\mubold \in \Dcal}{m_k(\mubold)}{\norm{\mubold-\mubold_k}_2\leq \Delta_{k}.}
\end{equation}
The trust-region subproblem does not need to be solved exactly; rather, it must only satisfy the
fraction of Cauchy decrease condition \cite{conn_trust-region_2000}. The step candidate is evaluated using the actual-to-predicted reduction ratio,
$\varrho_k$ in (\ref{eqn:tr:redu_ratio}), to determine whether to accept the step and how to adjust the trust-region radius.

While there are many trust-region methods available in the literature, we base our hyperreduced optimization
framework on the method in \cite{kouri_trust-region_2013} because it only requires an asymptotic error condition of the form
\begin{equation}\label{eqn:tr:asym_err}
\norm{\nabla m_k(\mubold_k) - \nabla f(\mubold_k)} \leq \xi \min\{\norm{\nabla m_k(\mubold_k)}, \Delta_k\},
\end{equation}
where $\xi > 0$ is any arbitrary constant (independent of $k$). Due to the arbitrariness of $\xi$, the above
bound is not particularly meaningful for a fixed $k$ (the bound may lack effectivity), but it requires the
model gradient to become increasingly accurate as $\norm{\nabla m_k(\mubold_k)} \rightarrow 0$ or
$\Delta_k \rightarrow 0$ (e.g., an asymptotic bound). If the model $m_k$ is equipped with the error bound
\begin{equation}\label{eqn:tr:grad_cond_varphi}
 \norm{\nabla m_k(\mubold_k) - \nabla f(\mubold_k)} \leq \xi \varphi_k(\mubold_k),
\end{equation}
where $\varphi_k : \Dcal \rightarrow \Rbb$ denotes a \textit{computable} error indicator for the model gradient,
then the gradient condition can be stated solely in terms of the error bound as
\begin{equation}\label{eqn:tr:varphi}
 \varphi_k(\mubold_k) \leq \kappa_\varphi \min\{\norm{\nabla m_k(\mubold_k)},\Delta_k\},
\end{equation}
where $\kappa_\varphi > 0$ is a chosen constant. This provides a computable criteria to use to select
the model $m_k$ at a given trust-region step.
The complete trust-region algorithm, summarized in Algorithm~\ref{alg:tr_inexact}, is globally convergent
(see \cite{kouri_trust-region_2013} for proof) with $\Dcal=\Rbb^{N_\mubold}$ under the following assumptions.
\begin{assume}\label{assum:tr_inexact}
    Assumptions on the trust-region method with inexact gradient condition
    \begin{enumerate}[label=\textbf{(AT\arabic*)}]
        \item \label{trammo:assump:1} $f$ is twice continuously differentiable and bounded below
        \item \label{trammo:assump:2} $m_k$ is twice continuously differentiable for $k=1,2,\ldots.$
        \item \label{trammo:assump:3}There exits $\kappa_1 >0, \kappa_2>1$ 
        such that $\norm{\nabla^2 f(\mubold)} \leq \kappa_1$ and $\norm{\nabla^2 m_k(\mubold)} \leq \kappa_2 - 1$
        \item \label{trammo:assump:4} There exists $\xi >0 $ such that
        \begin{equation*}
            \norm{\nabla m_k (\mubold_k)-\nabla f(\mubold_k)} \leq \xi \min\curlyb{\norm{\nabla m_k (\mubold_k)}, \Delta_k}.
        \end{equation*}
    \end{enumerate}
    \label{trammo:assump}
\end{assume}
\begin{algorithm}[H]
    \begin{algorithmic}[1]
        \REQUIRE Current iterate $\mubold_k$ and radius $\Delta_k$, and parameters $0<\gamma_1\leq\gamma_2<1$, $\Delta_\mathrm{max}>0$, $0< \eta_1<\eta_2<1$ \\
        \ENSURE Next iterate $\mubold_{k+1}$
        \STATE 
            {\bf Model update:} Construct the approximation model, $m_{k}(\mubold)$, that satisfies \ref{trammo:assump:2} and \ref{trammo:assump:4}.
        \STATE 
            {\bf Step computation:} Solve the trust-region subproblem to get the candidate center $\check{\mubold}_{k}$
            \begin{equation*}
                \optconOne{\mubold \in \Dcal}{m_k(\mubold)}{\norm{\mubold-\mubold_k}_2\leq \Delta_{k}}
            \end{equation*}
            such that $\check{\mubold}_k$ satisfies the fraction of \textit{Cauchy decrease condition} \cite{conn_trust-region_2000}
        \STATE
            {\bf Actual-to-predicted reduction ratio:} Evaluate the actual-to-predicted reduction ratio
            \begin{equation}\label{eqn:tr:redu_ratio}
                \varrho_{k}=\frac{f(\mubold_{k})-f(\check{\mubold}_{k})}{m_k(\mubold_{k})-m_k(\check{\mubold}_{k})}
            \end{equation}

        \STATE {\bf Step acceptance}: \\
            \begin{tabbing}
                \qquad \qquad \qquad 
                \=\textbf{if} \qquad \=$\varrho_{k} \geq \eta_1$  \qquad \= \textbf{then}  \qquad \=$\mubold_{k+1}=\check\mubold_k$ \qquad \= \textbf{else} \qquad \=$\mubold_{k+1}=\mubold_k$ \qquad \= \textbf{end if}
            \end{tabbing}
            
        \STATE
            {\bf Trust-region radius update:} \\
            \begin{tabbing}
                \qquad \qquad \qquad 
                \=\textbf{if} \qquad \=$\varrho_{k} < \eta_1$ \qquad \qquad \= \textbf{then}  \qquad \=$\Delta_{k+1} \in [\gamma_1 \Delta_k, \gamma_2 \Delta_k]$ \qquad \= \textbf{end if} \\

                \>\textbf{if}  \>$\varrho_{k} \in [\eta_1, \eta_2)$ \>\textbf{then} \>$\Delta_{k+1} \in [\gamma_2 \Delta_k, \Delta_k]$ \>\textbf{end if}\\

                \>\textbf{if}  \>$\varrho_{k} \geq \eta_2$ \>\textbf{then} \>$\Delta_{k+1} \in [\Delta_k, \Delta_{\text{max}}]$ \>\textbf{end if} 
            \end{tabbing}
\end{algorithmic} 
\caption{Trust-region method with inexact gradient condition}
\label{alg:tr_inexact}
\end{algorithm}

\begin{remark}
Many trust-region algorithms require the model $m_k$ and its
gradient match the true objective $f$ at trust-region centers either exactly or to within a prescribed tolerance.
Because a hyperreduced model can only, in general, match the corresponding HDM solution if $\rhobold=\onebold$
(i.e., no hyperreduction), methods that require model exactness at trust-region centers (e.g., \cite{alexandrov_trust-region_1998,zahr_adaptive_2016,yano_globally_2021}) are not useful for
this method. Furthermore, methods that require accuracy of the model and its gradient to within specified tolerances (e.g., \cite{carter1991global})
are also not feasible as this would rely on tight error bounds between a hyperreduced model and the corresponding HDM solution, which do not exist for general, nonlinear systems.
\end{remark}

\begin{remark}
Only existence of the constant $\xi$ is needed; its actual value is not necessary because it is not used in
Algorithm~\ref{alg:tr_inexact}. This is crucial to establish global convergence in the model reduction setting for a general
class of nonlinear system because the constant $\xi$ absorbs Lipschitz constants and bounds that are
rarely computable.
\end{remark}


\subsection{Accelerated optimization using trust regions and on-the-fly model hyperreduction}\label{sec:trammo:eqp_tr}
With the individual components---the hyperreduced models and the globally convergent trust-region method based on
asymptotic error bounds---in place, we combine them to define the proposed trust-region method accelerated with
model hyperreduction, to be referred to as EQP/TR in the remainder. We begin by defining the trust-region model ($m_k$)
at the $k$th trust-region center $\mubold_k$ as the quadratic approximation to the hyperreduced quantity of interest
\begin{equation}\label{eqn:tr:mk}
 m_k(\mubold) = \tilde{f}_{\Phibold_k}(\mubold_k; \rhobold_k) + \nabla\tilde{f}_{\Phibold_k}(\mubold_k; \rhobold_k)^T(\mubold-\mubold_k) + \frac{1}{2} (\mubold-\mubold_k)^T\tilde\Hbm_k(\mubold-\mubold_k),
\end{equation}
where $\Phibold_k\in\Rbb^{N_\ubm \times n_k}$ is the reduced basis, $\rhobold_k\in\Ccal_{\Phibold_k,\Xibold_k,\deltabold_k}$ is the weight
vector, and $\tilde\Hbm_k\in\Rbb^{N_\mubold\times N_\mubold}$ is the Hessian of $\tilde{f}_{\Phibold_k}$
\begin{equation}
 \tilde\Hbm_k \coloneqq \nabla^2\tilde{f}_{\Phibold_k}(\mubold_k; \rhobold_k),
\end{equation}
all at the $k$th trust-region center. Hessian-vector products are computed using a finite difference approximation involving
the gradient
\begin{equation}
 \tilde\Hbm_k \vbm \approx \frac{1}{\varepsilon}\left[\nabla\tilde{f}_{\Phibold_k}(\mubold_k+\varepsilon \vbm; \rhobold_k) - \nabla\tilde{f}_{\Phibold_k}(\mubold_k;\rhobold_k) \right],
\end{equation}
where $\varepsilon\in\Rbb_{>0}$ is the finite difference step size ($\varepsilon = 10^{-6}$ in this work).
Assuming the reduced basis dimension $n_k$ is small and the weight vector $\rhobold_k$ is sparse, the model $m_k$ and
its gradient $\nabla m_k$ will be much less expensive to query than the HDM $f$.

The reduced basis is chosen as
\begin{equation}\label{eqn:tr:construct_basis}
 \Phibold_k = \mathtt{GramSchmidt}\left(\begin{bmatrix} \ubm^\star(\mubold_k) & \lambdabold^\star(\mubold_k) & \Phibold_k^\mathtt{p} & \Phibold_k^\mathtt{a} \end{bmatrix}\right),
\end{equation}
to guarantee $\ubm^\star(\mubold_k),\lambdabold^\star(\mubold_k)\in\mathrm{Ran}~\Phibold_k$
(required for Corollaries~\ref{cor:qoi_errbnd}-\ref{cor:both_errbnd} to hold), where
\begin{equation}\label{eqn:tr:pod_snapshots}
 \Phibold_k^\mathtt{p} \coloneqq \mathtt{POD}_{N_\ubm,k}^{p_k}(\Ubm_{k-1})\in\Rbb^{N_\ubm\times p_k}, \qquad
 \Phibold_k^\mathtt{a} \coloneqq \mathtt{POD}_{N_\ubm,k}^{q_k}(\Vbm_{k-1})\in\Rbb^{N_\ubm\times q_k}
\end{equation}
are optimal compressions of state and adjoint snapshots from \textit{all previous iterations}
\begin{equation}\label{eqn:tr:snapshots}
\begin{aligned}
 \Ubm_k &= \begin{bmatrix} \ubm^\star(\mubold_0) & \cdots & \ubm^\star(\mubold_k)\end{bmatrix} \in \Rbb^{N_\ubm\times(k+1)}, \\
 \Vbm_k &= \begin{bmatrix} \lambdabold^\star(\mubold_0) & \cdots & \lambdabold^\star(\mubold_k)\end{bmatrix} \in \Rbb^{N_\ubm\times(k+1)}.
\end{aligned}
\end{equation}
With this choice, the size of the hyperreduced model will evolve as $n_k = 2 + p_k + q_k$, where $0\leq p_k\leq k$ and $0\leq q_k\leq k$
are user-defined parameters.

Next, we choose the constraint set $\Ccal_{\Phibold_k,\Xibold_k,\deltabold_k}$ such that
\begin{equation}\label{eqn:tr:conv_constr_set}
\Ccal_{\Phibold_k,\Xibold_k,\deltabold_k} \subseteq
\Ccal_{\Phibold_k,\Xibold_k,\delta_{\mathtt{rp},k}}^\mathtt{rp} \cap \Ccal_{\Phibold_k,\Xibold_k,\delta_{\mathtt{ra},k}}^\mathtt{ra} \cap \Ccal_{\Phibold_k,\Xibold_k,\delta_{\mathtt{ga},k}}^\mathtt{ga}
\end{equation}
and the EQP training set $\Xibold_k\subset\Dcal$ such that $\mubold_k \in \Xibold_k$ to
ensure Corollary~\ref{cor:qoi_grad_errbnd} holds, which leads to the result
\begin{equation}
 \norm{\nabla f(\mubold_k) - \nabla m_k(\mubold_k)} =
 \norm{\nabla f(\mubold_k) - \nabla \tilde{f}_{\Phibold_k}(\mubold_k;\rhobold_k)} \leq
 c_3'\delta_{\mathtt{rp},k} + c_4'\delta_{\mathtt{ra},k} + \delta_{\mathtt{ga},k}.
\end{equation}
Therefore, we take $\varphi_k$ in (\ref{eqn:tr:varphi}) to be
\begin{equation}
 \varphi_k \coloneqq \kappa_1\delta_{\mathtt{rp},k} + \kappa_2\delta_{\mathtt{ra},k} + \kappa_3\delta_{\mathtt{ga},k},
\end{equation}
where $\kappa_1,\kappa_2,\kappa_3>0$ are user-defined parameters.
Then, condition (\ref{eqn:tr:asym_err}) leads to the following bound on the tolerances
\begin{equation}
 \kappa_1\delta_{\mathtt{rp},k} + \kappa_2\delta_{\mathtt{ra},k} + \kappa_3\delta_{\mathtt{ga},k} \leq \hat\kappa~\mathrm{min}\left\{\norm{\nabla m_k(\mubold_k)},\Delta_k\right\}.
\end{equation}
For simplicity, we impose the slightly stronger condition that equally splits the bound among the three tolerances as
\begin{equation}\label{eqn:tr:strong_cond}
\begin{aligned}
 \delta_{\mathtt{rp},k} &\leq \frac{\hat\kappa}{3\kappa_1}\mathrm{min}\left\{\norm{\nabla m_k(\mubold_k)},\Delta_k\right\}, \\
 \delta_{\mathtt{ra},k} &\leq \frac{\hat\kappa}{3\kappa_2}\mathrm{min}\left\{\norm{\nabla m_k(\mubold_k)},\Delta_k\right\}, \\
 \delta_{\mathtt{ga},k} &\leq \frac{\hat\kappa}{3\kappa_3}\mathrm{min}\left\{\norm{\nabla m_k(\mubold_k)},\Delta_k\right\}.
\end{aligned}
\end{equation}
Weighting of the tolerances can be achieved through the choice of $\kappa_1,\kappa_2,\kappa_3$, although we take $\kappa_1=\kappa_2=\kappa_3$ in this work). Finally, with these choices, we chose the weight vector $\rhobold_k$
to the solution of (\ref{eqn:eqp:linprog}), i.e.,
\begin{equation}\label{eqn:tr:compute_eqp_weights}
\rhobold_k = \rhobold^\star(\Phibold_k, \Xibold_k,\deltabold_k),
\end{equation}
where the tolerances
\begin{equation}\label{eqn:tr:all_constr}
 \deltabold_k = (\delta_\mathtt{dv},\delta_{\mathtt{rp},k},\delta_{\mathtt{ra},k},\delta_{\mathtt{ga},k},\delta_\mathtt{q},\delta_\mathtt{rs})
\end{equation}
are chosen according to (\ref{eqn:tr:strong_cond}) with arbitrary $\delta_\mathtt{dv},\delta_\mathtt{q},\delta_\mathtt{rs} > 0$.

The complete EQP/TR algorithm is summarized in Algorithm~\ref{alg:tr_hyp} (Algorithm~\ref{alg:tr_inexact} specialized to the choice for the
choice of $m_k$ in the hyperreduction setting). The specific construction of
the reduced basis $\Phibold_k$ and weight vector $\rhobold_k$ outlined above are sufficient to
ensure the complete algorithm is globally convergent (Theorem~\ref{the:qoi_grad_errbnd}).

\begin{algorithm}[H]
    \begin{algorithmic}[1]
        \REQUIRE Current iterate $\mubold_k$ and radius $\Delta_k$, trust-region parameters $0<\gamma_1\leq\gamma_2<1, \Delta_\mathrm{max}>0, 0<\eta_1<\eta_2<1$, snapshot matrices $\Ubm_k$ and $\Vbm_k$
       \ENSURE Next iterate $\mubold_{k+1}$, updated snapshot matrices $\Ubm_{k+1}$ and $\Vbm_{k+1}$
        \STATE {\bf Model update:} Build approximation model $m_k(\mubold)$ in (\ref{eqn:tr:mk})
        \begin{itemize}[topsep=0pt, itemsep=0pt]
          \setlength\itemsep{0pt}
          \item Solve primal and adjoint HDM: $\ubm^\star(\mubold_k)$, $\lambdabold^\star(\mubold_k)$
          \item Construct reduced basis $\Phibold_k$ according to (\ref{eqn:tr:construct_basis})
          \item Compute EQP weights $\rhobold_k$ according to (\ref{eqn:tr:compute_eqp_weights}) with tolerances given by (\ref{eqn:tr:strong_cond}) and (\ref{eqn:tr:all_constr})
          \item Update snapshot matrices
          \begin{equation}
           \Ubm_{k+1} = \begin{bmatrix} \ubm^\star(\mubold_k) & \Ubm_k \end{bmatrix}, \qquad
           \Vbm_{k+1} = \begin{bmatrix} \lambdabold^\star(\mubold_k) & \Vbm_k \end{bmatrix}
          \end{equation}
        \end{itemize}
        \STATE {\bf Step computation:} Solve the trust-region subproblem (\ref{eqn:tr:subprob}) to get the candidate center $\check\mubold_k$
        \STATE {\bf Actual-to-predicted reduction ratio:} Compute the actual-to-predicted reduction ratio $\varrho_k$ in (\ref{eqn:tr:redu_ratio})
        \STATE {\bf Step acceptance}: \\
            \begin{tabbing}
                \qquad \qquad \qquad 
                \=\textbf{if} \qquad \=$\varrho_{k} \geq \eta_1$  \qquad \= \textbf{then}  \qquad \=$\mubold_{k+1}=\check\mubold_k$ \qquad \= \textbf{else} \qquad \=$\mubold_{k+1}=\mubold_k$ \qquad \= \textbf{end if}
            \end{tabbing}
            
        \STATE
            {\bf Trust-region radius update:} \\
            \begin{tabbing}
                \qquad \qquad \qquad 
                \=\textbf{if} \qquad \=$\varrho_{k} < \eta_1$ \qquad \qquad \= \textbf{then}  \qquad \=$\Delta_{k+1} \in [\gamma_1 \Delta_k, \gamma_2 \Delta_k]$ \qquad \= \textbf{end if} \\

                \>\textbf{if}  \>$\varrho_{k} \in [\eta_1, \eta_2)$ \>\textbf{then} \>$\Delta_{k+1} \in [\gamma_2 \Delta_k, \Delta_k]$ \>\textbf{end if}\\

                \>\textbf{if}  \>$\varrho_{k} \geq \eta_2$ \>\textbf{then} \>$\Delta_{k+1} \in [\Delta_k, \Delta_{\text{max}}]$ \>\textbf{end if} 
            \end{tabbing}
\end{algorithmic}
\caption{Trust-region method with hyperreduced approximation models}
\label{alg:tr_hyp}
\end{algorithm}

\begin{theorem}
Suppose Assumptions~\ref{trammo:assump:1}-\ref{trammo:assump:3} and~\ref{assum:hdm}-\ref{assum:eqp} hold
with $\Rcal\supset\cup_{k=1}^\infty\Ccal_{\Phibold_k,\Xibold_k,\deltabold_k}$. Then the iterates $\{\mubold_k\}$ generated by Algorithm~\ref{alg:tr_hyp} satisfy
\begin{equation}\label{eqn:tr:inf_mk}
 \liminf_{k\rightarrow\infty}\,\norm{\nabla m_k(\mubold_k)} = \liminf_{k\rightarrow\infty}\,\norm{\nabla f(\mubold_k)} = 0
\end{equation}
independent of the choice of $p_k$ and $q_k$.
\begin{proof}
From Assumption~\ref{trammo:assump:1}-\ref{trammo:assump:3} and Theorem 4.3 of \cite{kouri_trust-region_2013}, the result (\ref{eqn:tr:inf_mk}) holds if the model $m_k$ satisfies (\ref{eqn:tr:asym_err}).
With the choice of $\Phibold_k$ in (\ref{eqn:tr:construct_basis}) or (\ref{eqn:tr:construct_basis_sens}), we have $\ubm^\star(\mubold_k),\lambdabold^\star(\mubold_k)\in\mathrm{Ran}~\Phibold_k$
independent of the choice of $p_k$ or $q_k$.
Furthermore, from the choice of $\Xibold_k$ such that $\mubold_k\in\Xibold_k$ (e.g., $\Xibold_k=\{\mubold_k\}$ as in (\ref{eqn:tr:eqp_train_set})) and
the constraint set (\ref{eqn:tr:conv_constr_set}), the assumptions of Corollary~\ref{cor:qoi_grad_errbnd} are satisfied, which implies the existence of constants
$c_3',c_4'>0$ such that
\begin{equation}
 \norm{\nabla f(\mubold_k) - \nabla m_k(\mubold_k)} = \norm{\nabla f(\mubold_k) - \nabla \tilde{f}_\Phibold(\mubold_k;\rhobold_k)} \leq
   c_3'\delta_{\mathtt{rp},k} + c_4'\delta_{\mathtt{ra},k} + \delta_{\mathtt{ga},k}.
\end{equation}
From the condition in (\ref{eqn:tr:strong_cond}), this reduces to
\begin{equation}
\norm{\nabla f(\mubold_k) - \nabla m_k(\mubold_k)} \leq \hat\kappa \left(\frac{c_3'}{3\kappa_1} + \frac{c_4'}{3\kappa_2} + \frac{1}{3\kappa_3}\right) \mathrm{min}\left\{\norm{\nabla m_k(\mubold_k)},\Delta_k\right\},
\end{equation} 
which is identical to (\ref{eqn:tr:asym_err}) with $\xi = \hat\kappa \left(\frac{c_3'}{3\kappa_1} + \frac{c_4'}{3\kappa_2} + \frac{1}{3\kappa_3}\right)$.
Therefore, the result in (\ref{eqn:tr:inf_mk}) holds.
\end{proof}
\end{theorem}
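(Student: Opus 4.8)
The plan is to reduce the claim to the convergence result for the inexact trust-region method, Theorem~4.3 of \cite{kouri_trust-region_2013}, whose conclusion is precisely (\ref{eqn:tr:inf_mk}). Assumptions~\ref{trammo:assump:1}--\ref{trammo:assump:3} are hypotheses of the present statement, and~\ref{trammo:assump:2} is immediate because each $m_k$ in (\ref{eqn:tr:mk}) is a quadratic in $\mubold$ and hence smooth. Thus the only genuine obligation is to verify the asymptotic gradient condition~\ref{trammo:assump:4}, i.e.\ (\ref{eqn:tr:asym_err}), for the specific model produced by Algorithm~\ref{alg:tr_hyp}. Everything turns on exhibiting \emph{some} constant $\xi>0$, independent of $k$, for which that bound holds.

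First I would note that the quadratic structure of (\ref{eqn:tr:mk}) gives $\nabla m_k(\mubold_k) = \nabla\tilde{f}_{\Phibold_k}(\mubold_k;\rhobold_k)$, so the quantity to control is exactly the gradient error between the true reduced objective and its hyperreduced surrogate at the trust-region center. The key structural fact is that the basis construction (\ref{eqn:tr:construct_basis}) explicitly orthonormalizes $\ubm^\star(\mubold_k)$ and $\lambdabold^\star(\mubold_k)$ into the columns of $\Phibold_k$, so that $\ubm^\star(\mubold_k),\lambdabold^\star(\mubold_k)\in\mathrm{Ran}~\Phibold_k$ \emph{regardless} of $p_k$ and $q_k$; this is precisely what delivers the claimed independence from $p_k,q_k$. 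Combined with $\mubold_k\in\Xibold_k$ and the constraint inclusion (\ref{eqn:tr:conv_constr_set}), the hypotheses of Corollary~\ref{cor:qoi_grad_errbnd} are met, yielding constants $c_3',c_4'>0$ (independent of $\mubold_k$) with
\begin{equation*}
 \norm{\nabla f(\mubold_k) - \nabla m_k(\mubold_k)} \leq c_3'\delta_{\mathtt{rp},k} + c_4'\delta_{\mathtt{ra},k} + \delta_{\mathtt{ga},k}.
\end{equation*}

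Next I would substitute the tolerance schedule (\ref{eqn:tr:strong_cond}), which bounds each of $\delta_{\mathtt{rp},k},\delta_{\mathtt{ra},k},\delta_{\mathtt{ga},k}$ by a fixed multiple of $\min\{\norm{\nabla m_k(\mubold_k)},\Delta_k\}$, collapsing the right-hand side into $\xi\min\{\norm{\nabla m_k(\mubold_k)},\Delta_k\}$ with $\xi = \hat\kappa\,(c_3'/(3\kappa_1) + c_4'/(3\kappa_2) + 1/(3\kappa_3))$. This is exactly~\ref{trammo:assump:4}, completing the reduction. The subtle point I would stress---rather than a computational obstacle, a conceptual one---is that $\xi$ is assembled from the generally \emph{incomputable} residual-estimate constants $c_3',c_4'$ together with the chosen $\kappa_i,\hat\kappa$; the argument works only because Algorithm~\ref{alg:tr_inexact} requires merely the \emph{existence} of $\xi$, never its value, so these constants never need to be evaluated. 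Finally, the assumption $\Rcal\supset\cup_{k}\Ccal_{\Phibold_k,\Xibold_k,\deltabold_k}$ is used only to guarantee that every weight vector $\rhobold_k=\rhobold^\star(\Phibold_k,\Xibold_k,\deltabold_k)$ is admissible, so that each $m_k$ is well-defined along the entire trajectory.
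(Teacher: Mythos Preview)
Your proposal is correct and follows essentially the same route as the paper's proof: reduce to Theorem~4.3 of \cite{kouri_trust-region_2013}, verify \ref{trammo:assump:4} by invoking Corollary~\ref{cor:qoi_grad_errbnd} (using $\ubm^\star(\mubold_k),\lambdabold^\star(\mubold_k)\in\mathrm{Ran}\,\Phibold_k$, $\mubold_k\in\Xibold_k$, and the constraint inclusion (\ref{eqn:tr:conv_constr_set})), then substitute the tolerance schedule (\ref{eqn:tr:strong_cond}) to obtain the explicit $\xi$. Your additional remarks---that $\nabla m_k(\mubold_k)=\nabla\tilde f_{\Phibold_k}(\mubold_k;\rhobold_k)$ from the quadratic form, and the role of the hypothesis $\Rcal\supset\cup_k\Ccal_{\Phibold_k,\Xibold_k,\deltabold_k}$ in ensuring admissibility of each $\rhobold_k$---are correct elaborations that the paper leaves implicit.
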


\begin{remark}
The trust-region subproblems do not need to be solved exactly, rather they must guaranteed a fraction of the Cauchy decrease
condition \cite{conn_trust-region_2000}. In this work, we leverage this opportunity for efficiency by using the Steihaug-Toint Truncated Conjugate Gradient
method, which guarantees the candidate step will satisfy the fraction of Cauchy decrease condition.
\end{remark}

\begin{remark}
We choose the quadratic model in (\ref{eqn:tr:mk}) rather than the more obvious choice of directly using the hyperreduced model,
i.e., $m_k(\mubold) = \tilde{f}_{\Phibold_k}(\mubold; \rhobold_k)$, as done in other work \cite{zahr_adaptive_2016,qian_certified_2017,yano_globally_2021}
because the trust-region subproblems are less expensive to solve. For example, a truncated conjugate
gradient method can be used rather than a more sophisticated solver for a general nonlinear subproblem. Even though the trust-region
subproblem only requires hyperreduced model queries, it can become a bottleneck (especially at later iterations when the
reduced basis is large) and the solver can be prone to failure. In some cases these issues are supplanted because this approach
can lead to models with higher predictive capability, which leads to more successful iterations and fewer HDM queries \cite{zahr_adaptive_2016,qian_certified_2017}. 
\end{remark}

\begin{remark}
With the choice of the reduced basis $\Phibold_k$ in (\ref{eqn:tr:construct_basis}), the basis size is bounded as $n_k \leq 2(k+1)$ leading to a very small
hyperreduced model in the early iterations, which can limit its predictive capability. Therefore, following the work in \cite{zahr_efficient_2019},
we will also consider a related approach that includes the HDM sensitivities at the starting point
\begin{equation}\label{eqn:tr:construct_basis_sens}
 \Phibold_k = \mathtt{GramSchmidt}\left(\begin{bmatrix} \ubm^\star(\mubold_k) & \lambdabold^\star(\mubold_k) & \partial_\mubold\ubm^\star(\mubold_0) & \Phibold_k^\mathtt{p} & \Phibold_k^\mathtt{a} \end{bmatrix}\right),
\end{equation}
which leads to a basis of size $n_k = 2+N_\mubold+p_k+q_k$ (bounded by $n_k \leq 2(k+1)+N_\mubold$).
\end{remark}

\begin{remark}
Fast singular value decomposition updates \cite{brand_fast_2006} can be used to mitigate the cost of constructing $\Phibold_k^\mathtt{p}$ and
$\Phibold_k^\mathtt{q}$ as the snapshot matrices grow with $k$ in (\ref{eqn:tr:pod_snapshots}) and (\ref{eqn:tr:snapshots}).
\end{remark}

\begin{remark}\label{rem:eqp_train_set}
The only requirement on the EQP training set $\Xibold_k$ is that it contains the trust-region center, i.e., $\mubold_k \in \Xibold_k$.
In this work, we take
\begin{equation}\label{eqn:tr:eqp_train_set}
\Xibold_k = \{\mubold_k\}
\end{equation}
because this is the simplest and least expensive option. Other natural choices
are to include previous trust-region centers, i.e., $\Xibold_k = \{\mubold_{k-t}, \dots, \mubold_k\}$, or local sampling about the
current trust-region center. Numerical investigation into these options showed they significantly increase computational cost without
appreciably improving the predictive capability of the hyperreduced model, so we focus solely on the approach in (\ref{eqn:tr:eqp_train_set}).
\end{remark}

\begin{remark}
As stated, the tolerances $\delta_{\mathtt{rp},k},\delta_{\mathtt{ra},k},\delta_{\mathtt{ga},k}$ in (\ref{eqn:tr:all_constr}) must be computed iteratively
because the bound for the tolerance depends on $m_k$, which in turn depends on the tolerances. In practice, we lag the
right-hand side to $m_{k-1}(\mubold_{k-1})$. Experimentation with both approaches yielded no discernible difference in
convergence, but a significantly higher computational cost for the iterative approach, so we only consider the lagged approach.
\end{remark}

\begin{remark}
In some cases, an affine subspace approximation $\ubm^\star \approx \bar\ubm + \Phibold\hat\ybm^\star$ is preferred to the linear
approximation in (\ref{eqn:rom:ansatz}). All results and discussions directly carry though in the affine setting with only a small change to the
basis construction. Specifically, we choose the affine offset at the $k$th iteration to be the trust-region center, i.e.,
$\bar\ubm_k = \ubm^\star(\mubold_k)$, which no longer requires $\ubm^\star(\mubold_k)$ to be explicitly included
in the basis construction in (\ref{eqn:tr:construct_basis}) and (\ref{eqn:tr:construct_basis_sens}). Lastly, we modify the primal snapshot matrix to be
\begin{equation}
 \Ubm_{k-1} = \begin{bmatrix} \ubm^\star(\mubold_0)-\ubm^\star(\mubold_k) & \cdots & \ubm^\star(\mubold_{k-1})-\ubm^\star(\mubold_k) \end{bmatrix}
\end{equation}
because the reduced basis $\Phibold_k$ now only represents \textit{deviations} of the primal state from the offset
(rather than the primal state itself).
\end{remark}

\section{Numerical experiments}
\label{sec:numexp}
In this section, we apply the proposed hyperreduced optimization method to solve two shape optimization problems to study
the performance of the overall method and its sensitivity with respect to key parameters including the gradient condition
tolerance ($\hat\kappa/\kappa_i$), the size of the reduced basis ($n_k$), and
the empirical quadrature constraints used ($\Ccal_{\Phibold,\Xibold,\deltabold}$). Specifically, we study the
proposed EQP/TR method under the following collection of EQP constraints,
\begin{equation}\label{eqn:numexp:constr_set}
\begin{aligned}
 \Ccal_{\Phibold,\Xibold,\deltabold}^{(1)} &= \Ccal_{\delta_\mathtt{dv}}^\mathtt{dv} \cap \Ccal_{\Phibold,\Xibold,\delta_\mathtt{rp}}^\mathtt{rp} \cap \Ccal_{\Phibold,\Xibold,\delta_\mathtt{ra}}^\mathtt{ra} \cap \Ccal_{\Phibold,\Xibold,\delta_\mathtt{ga}}^\mathtt{ga} \cap \Ccal_{\Phibold,\Xibold,\delta_\mathtt{q}}^\mathtt{q} \\
 \Ccal_{\Phibold,\Xibold,\deltabold}^{(2)} &= \Ccal_{\delta_\mathtt{dv}}^\mathtt{dv} \cap \Ccal_{\Phibold,\Xibold,\delta_\mathtt{rp}}^\mathtt{rp} \cap \Ccal_{\Phibold,\Xibold,\delta_\mathtt{ra}}^\mathtt{ra} \cap \Ccal_{\Phibold,\Xibold,\delta_\mathtt{ga}}^\mathtt{ga}  \cap \Ccal_{\Phibold,\Xibold,\delta_\mathtt{rs}}^\mathtt{rs} \\
 \Ccal_{\Phibold,\Xibold,\deltabold}^{(3)} &= \Ccal_{\delta_\mathtt{dv}}^\mathtt{dv} \cap \Ccal_{\Phibold,\Xibold,\delta_\mathtt{rp}}^\mathtt{rp} \cap \Ccal_{\Phibold,\Xibold,\delta_\mathtt{ra}}^\mathtt{ra} \cap \Ccal_{\Phibold,\Xibold,\delta_\mathtt{ga}}^\mathtt{ga} \cap \Ccal_{\Phibold,\Xibold,\delta_\mathtt{rs}}^\mathtt{rs} \cap \Ccal_{\Phibold,\Xibold,\delta_\mathtt{q}}^\mathtt{q},
\end{aligned}
\end{equation}
where $\Ccal_{\Phibold,\Xibold,\deltabold}^{(1)}$ is sufficient to establish global convergence of the method, but we will
show that adding the sensitivity residual constraint and/or the QoI constraint can accelerate convergence. To provide a
baseline for comparison, we will compare the EQP/TR methods to directly solving the optimization problem in (\ref{eqn:hdm:opt:reduc_space})
using HDM evaluations (no model reduction) with a BFGS linesearch method and the trust-region approach in \cite{zahr_adaptive_2016}
based on reduced-order models only (no hyperreduction). Furthermore, we also demonstrate the effectiveness of including
the sensitivity solution at the starting point $\partial_\mubold \ubm^\star(\mubold_0)$ by comparing several of the aforementioned
approaches with and without it. Table~\ref{tab:numexp:var_model} summarizes the various methods we study in this section, and sets notation
for later reference.
\begin{table}[H]
\centering
\caption{Various optimization methods studied in this work.}
\label{tab:numexp:var_model}
\begin{tabular}{l|c|c|c}
label & model & $\partial_\mubold \ubm^\star(\mubold_0)$ included in training & EQP constraint set \\\hline
$\mathrm{HDM}$ & HDM & - & - \\
$\mathrm{ROM}$ & ROM & no & - \\
$\mathrm{ROM}_{\partial_0}$ & ROM & yes & - \\
$\mathrm{EQP}^{(i)}$ & EQP & no & $\Ccal_{\Phibold,\Xibold,\deltabold}^{(i)}$ \\
$\mathrm{EQP}_{\partial_0}^{(i)}$ & EQP & yes & $\Ccal_{\Phibold,\Xibold,\deltabold}^{(i)}$ \\
\end{tabular}
\end{table}

To assess the performance of each method in Table~\ref{tab:numexp:var_model}, we consider the computational cost required
to achieve a given value of the objective function, i.e., given $\epsilon>0$, we study the computational cost required
for each algorithm to satisfy $S_k < \epsilon$, where $S_k$ is the normalized distance from the optimal objective value
\begin{equation}\label{eqn:numexp:sk}
 S_k \coloneqq |f(\mubold_k)|, \qquad
 S_k \coloneqq \frac{|f(\mubold_k) - f^*|}{|f^*|}
\end{equation}
and $f^*$ is the optimal value of the objective function. The first definition of $S_k$ is used when
$f^* = 0$ and second is used otherwise. In addition, we will consider the convergence
history of $S_k$ to provide a complete picture of the performance of each method. Because we have
included hyperreduction, CPU time will directly be used to quantify the computational cost as opposed
to other work that has relied on cost estimates \cite{zahr_adaptive_2016,zahr_efficient_2019}.

There are relatively few user-defined parameters because the procedure to define the basis $\Phibold_k$
and half of the EQP tolerances ($\delta_\mathtt{rp}, \delta_\mathtt{ra}, \delta_\mathtt{ga}$) are prescribed
in Section~\ref{sec:trammo:eqp_tr} and come directly from the optimization convergence criteria. Among the remaining
user-defined parameters are those related to the trust-region algorithm itself; however, these are
well-studied at this point so we fix to reasonable values: $\eta_1 = 0.1$, $\eta_2 = 0.75$,
$\gamma_1=0.5$, and $\gamma_2 = 1$. Another important trust-region parameter is the initial trust-region
radius $\Delta_0$, which has also been extensively studied in other work \cite{yano_globally_2021} so we
set it to a reasonable value $\Delta_0=0.1$. The remaining parameters that must be set are
the EQP tolerances that do not appear in the convergence criteria ($\delta_\mathtt{dv}, \delta_\mathtt{q}, \delta_\mathtt{rs}$).
In this work, we are only interested in whether including these additional constraints improves the convergence rate
of the EQP/TR method so we either set $\delta_\mathtt{dv}=\delta_\mathtt{q}=\delta_\mathtt{rs}=\infty$ (constraints not used) or
$\delta_\mathtt{dv}=10^{-4}$, $\delta_\mathtt{q}=10^{-6}$ and $\delta_\mathtt{rs}=10^{-3}$.


\subsection{Shape optimization of aorto-coronaric bypass}
\label{sec:numexp:bypass}
The first problem we consider is shape optimization of an aorto-coronaric bypass adapted from \cite{rozza_optimization_2005}.
Let $\Omega \in \Rbb^2$ be the domain (initial configuration) shown Figure~\ref{fig:bypass:schematic} with incompressible,
viscous flow governed by the steady, incompressible Navier-Stokes equations
 \begin{equation} \label{eqn:numexp:bypass:ins}
     (v\cdot\nabla)v - \nu\nabla^2 v + \frac{1}{\rho_0}\nabla P = 0, \quad \nabla \cdot v = 0 \quad\text{in}~~\Omega,
 \end{equation}
 where $\rho_0\in\Rbb_{>0}$ is the density of the fluid, $\nu\in\Rbb_{>0}$
 is the kinematic viscosity of the fluid, and $\func{v}{\Omega}{\Rbb^d}$ and
 $\func{P}{\Omega}{\Rbb_{>0}}$ are the velocity and pressure, respectively,
 of the fluid implicitly defined as  the solution of (\ref{eqn:numexp:bypass:ins}).
 Boundary conditions for the boundaries identified in Figure~\ref{fig:bypass:schematic} are
 \begin{equation}\label{eqn:numexp:bypass:ins_bc}
   v = v_\mathrm{in} \quad\text{on}~~\Gamma_\mathrm{in}, \qquad
   \sigma \cdot n = 0\quad\text{on}~~\Gamma_\mathrm{out}, \qquad
   v = 0 \quad\text{on}~~\Gamma_\mathrm{w},
 \end{equation}
where $v_\text{in} = (10,0) $ is the inflow velocity and $\func{n}{\partial\Omega}{\Rbb^d}$ is the outward unit
normal to the boundary of the domain.
In this work we take $\rho_0 = 1$ and $\nu = \frac{L \norm{v_\mathrm{in}}}{\mathrm{Re}}$, where $L=0.2$
is the length scale for the problem and $\mathrm{Re}=500$ is the corresponding Reynolds number.
\begin{figure}[H]
    \centering
    \tikzset{every picture/.style={scale=1.1}}
    \input{_py/bypass/bypass0_geom.tikz}
    \caption{Schematic of the aorto-coronaric bypass with boundaries: $\Gamma_\mathrm{in}$ (\ref{line:bypass:inlet}),
    $\Gamma_\mathrm{out}$ (\ref{line:bypass:outlet}),
    $\Gamma_\mathrm{c}$ (\ref{line:bypass:control}),
    and $\Gamma_\mathrm{w} \coloneqq \partial\Omega\setminus(\Gamma_\mathrm{in}\cup\Gamma_\mathrm{out})$ (\ref{line:bypass:domain}).}
    \label{fig:bypass:schematic}
\end{figure}

The objective of this problem is to minimize the vorticity ($\nabla \times v$) of the flow in the region $\Omega_\mathrm{wd}$
by adjusting the shape of the upper wall ($\Gamma_\mathrm{c}$). The shape of $\Gamma_\mathrm{c}$ is parametrized using
a Bezier curve with 8 control points and the deformation is extended to the rest of the domain using linear elasticity
(Section~\ref{sec:hyperreduction:shapeopt}). In this case, the parameter vector $\mubold$ denotes the collection of Bezier control points.
This leads to the following PDE-constrained shape optimization problem 
\begin{equation}
  \underset{v,P,\mubold}{\text{minimize}} ~~\int_{\Omega_\mathrm{wd}(\mubold)} \nabla \times v ~ dV + \frac{\alpha}{2}\norm{\mubold}_2^2, \quad
  \text{subject to:} ~~ \Lcal(v,P;\mubold)=0, ~~ -0.4\leq \mubold \leq 0.4,
\end{equation}
where $\Lcal$ is the differential operator that includes the incompressible Navier-Stokes equations (\ref{eqn:numexp:bypass:ins}) and boundary
conditions (\ref{eqn:numexp:bypass:ins_bc}), the bound constraints were included to ensure the shapes remain physically relevant, and we set the
regularization parameter to $\alpha=500$. The starting point and optimal solution for this problem are shown in Figure~\ref{fig:numexp:bypass:soln}.
\begin{figure}[H]
    \centering
    \begin{minipage}[t]{0.49\textwidth}
        \centering
        \includegraphics[width=\textwidth]{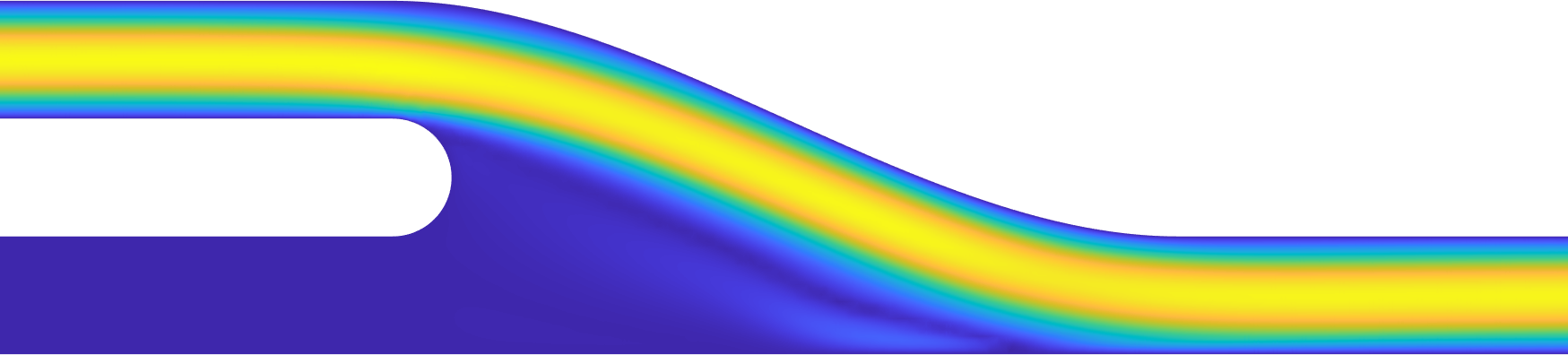}
    \end{minipage}
    \hfill
    \begin{minipage}[t]{0.49\textwidth}
        \centering
        \includegraphics[width=\textwidth]{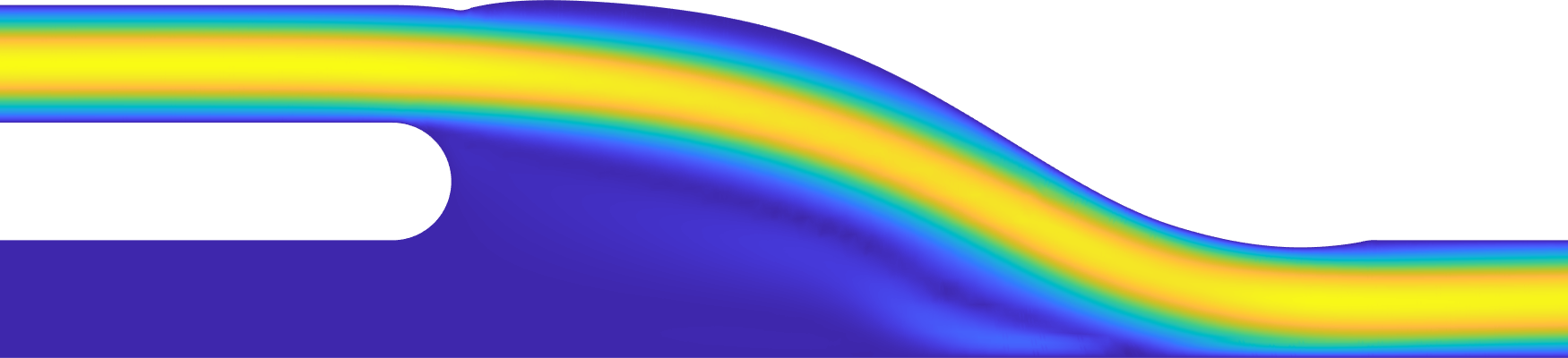}
    \end{minipage}
    \caption{The domain shape and velocity magnitude for the bypass problem at the starting configuration (\textit{left}) and optimal solution (\textit{right}).}
    \label{fig:numexp:bypass:soln}
\end{figure}

The governing equations and quantity of interest are discretized using the finite element method with $\Pcal^2-\Pcal^1$
Taylor-Hood elements to yield a discrete optimization problem of the form (\ref{eqn:hdm:opt:reduc_space}). A quadratic mesh consisting of $1749$
triangular elements was generated using DistMesh \cite{persson_simple_2004} and used for all numerical experiments.

\subsubsection{Influence of the gradient condition tolerance}
To begin, we study the impact of $\hat\kappa/\kappa_i$ in (\ref{eqn:tr:strong_cond}) on convergence of the EQP/TR method, which are
user-defined parameters that define the relative importance of each term in the gradient bound, as well as the initial
magnitude of the various EQP tolerances. For simplicity, we take $\kappa \coloneqq \hat\kappa/\kappa_i$ for $i=1,2,3$.
To keep the size of this study manageable, we study $\kappa$ in isolation by only
considering the $\mathrm{EQP}_{\partial_0}^{(3)}$ method and not truncating the reduced basis.
The convergence rate of the EQP/TR method depends
moderately on the choice of $\kappa$; however, for all values considered, EQP/TR converged faster than the
$\mathrm{HDM}$ method (Figure~\ref{fig:numexp:bypass:kappa}). Furthermore, $\kappa=10^{-4}$ leads to the fastest convergence
for this configuration and will be used in the remainder.
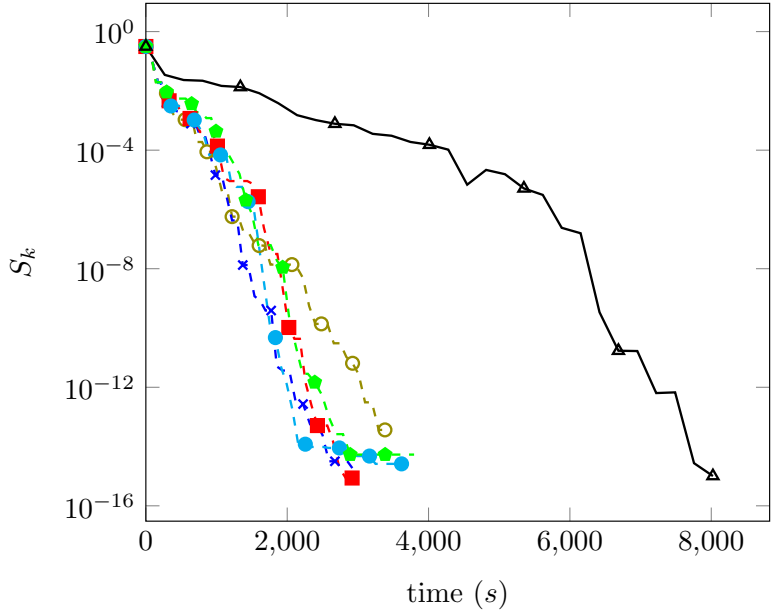
\begin{figure}[H]
    \centering
    \tikzset{every picture/.style={scale=1.1}}
    \begin{tikzpicture}
\begin{axis}[
xmin=0,
ymode=log,
xlabel={time $(s)$},
ylabel={$S_k$}]
\addplot [mark=o, mark size=2, mark options={solid}, dashed, thick, olive, mark repeat=5]
coordinates {
( 0.00000000e+00,  3.14463285e-01)
( 6.55390244e+01,  2.18930844e-01)
( 9.29927762e+01,  9.36700143e-02)
( 1.41737382e+02,  1.97916823e-02)
( 2.09025799e+02,  1.97916823e-02)
( 2.91261000e+02,  8.28307244e-03)
( 3.55787452e+02,  2.16713139e-03)
( 4.22496108e+02,  1.23271886e-03)
( 4.89744462e+02,  1.06780814e-03)
( 5.22336271e+02,  1.06780814e-03)
( 5.55889197e+02,  1.06780814e-03)
( 5.88544608e+02,  1.06780814e-03)
( 6.55958577e+02,  1.06780814e-03)
( 7.24129987e+02,  1.88556968e-04)
( 7.95627486e+02,  1.88556968e-04)
( 8.64119290e+02,  8.80644557e-05)
( 9.33128645e+02,  8.80644557e-05)
( 1.00340811e+03,  2.07774020e-05)
( 1.07753328e+03,  3.97947154e-06)
( 1.15606845e+03,  3.97947154e-06)
( 1.22014376e+03,  5.71965789e-07)
( 1.30725619e+03,  5.71965789e-07)
( 1.38128376e+03,  1.75976686e-07)
( 1.44642925e+03,  1.75976686e-07)
( 1.52197208e+03,  8.46319911e-08)
( 1.60258956e+03,  6.04234215e-08)
( 1.68237610e+03,  6.04234215e-08)
( 1.76373007e+03,  1.37713077e-08)
( 1.91181925e+03,  1.37713077e-08)
( 1.98840301e+03,  1.37713077e-08)
( 2.06695397e+03,  1.37713077e-08)
( 2.14696146e+03,  1.01343240e-08)
( 2.22500264e+03,  4.76630027e-09)
( 2.31093532e+03,  5.44139724e-10)
( 2.39813729e+03,  1.37555550e-10)
( 2.48332737e+03,  1.37555550e-10)
( 2.56920704e+03,  1.37555550e-10)
( 2.65902791e+03,  3.05232956e-11)
( 2.74907443e+03,  3.05232956e-11)
( 2.83768376e+03,  1.04758972e-11)
( 2.92484752e+03,  6.45036575e-12)
( 3.01117227e+03,  2.74427958e-12)
( 3.10155104e+03,  3.10190215e-13)
( 3.19839680e+03,  3.10190215e-13)
( 3.29454706e+03,  3.64419273e-14)
( 3.38297377e+03,  3.64419273e-14)};\label{line:bypass:kappa3}

\addplot [mark=x, mark size=2, mark options={solid}, dashed, thick, blue, mark repeat=5]
coordinates {
( 0.00000000e+00,  3.14463285e-01)
( 6.67281485e+01,  2.18964731e-01)
( 9.43358384e+01,  9.31033568e-02)
( 1.75265180e+02,  1.86158060e-02)
( 2.09267711e+02,  1.86158060e-02)
( 2.92133832e+02,  4.06542158e-03)
( 3.62016327e+02,  4.06542158e-03)
( 4.31756689e+02,  2.08268849e-03)
( 4.99804704e+02,  1.86565882e-03)
( 5.69886635e+02,  1.86565882e-03)
( 6.36901107e+02,  7.85640653e-04)
( 7.05009544e+02,  7.85640653e-04)
( 7.73070303e+02,  2.70862147e-04)
( 8.42922794e+02,  2.70862147e-04)
( 9.12223738e+02,  6.69991499e-05)
( 9.83906714e+02,  1.44038287e-05)
( 1.07455285e+03,  1.44038287e-05)
( 1.14707324e+03,  2.41503987e-06)
( 1.22050393e+03,  4.23228041e-07)
( 1.29438046e+03,  4.23228041e-07)
( 1.37137507e+03,  1.32967247e-08)
( 1.44778459e+03,  1.32967247e-08)
( 1.53778006e+03,  1.17766687e-09)
( 1.61589665e+03,  8.43891206e-10)
( 1.69340834e+03,  3.88131544e-10)
( 1.77285373e+03,  3.88131544e-10)
( 1.85322974e+03,  5.03144436e-12)
( 1.93453262e+03,  3.66212448e-12)
( 2.03042063e+03,  3.66212448e-12)
( 2.12645922e+03,  2.71289903e-13)
( 2.22452971e+03,  2.71289903e-13)
( 2.32341095e+03,  5.66874425e-14)
( 2.41775189e+03,  5.66874425e-14)
( 2.49978687e+03,  2.44392290e-14)
( 2.58892281e+03,  3.18143810e-15)
( 2.67500566e+03,  3.18143810e-15)
( 2.76680015e+03,  3.03682728e-15)
( 2.85794867e+03,  3.03682728e-15)
( 2.94328443e+03,  1.59071905e-15)};\label{line:bypass:kappa4}

\addplot [mark=square*, mark size=2, mark options={solid}, dashed, thick, red, mark repeat=5]
coordinates {
( 0.00000000e+00,  3.14463285e-01)
( 6.67900521e+01,  2.18967865e-01)
( 9.48486113e+01,  9.30327237e-02)
( 1.76639498e+02,  1.86635893e-02)
( 2.45112833e+02,  1.86635893e-02)
( 3.28846516e+02,  4.62939274e-03)
( 3.65113031e+02,  3.18002496e-03)
( 4.50953096e+02,  3.18002496e-03)
( 4.84041571e+02,  2.18295568e-03)
( 5.57053446e+02,  2.18295568e-03)
( 6.25721284e+02,  1.17451482e-03)
( 6.97934729e+02,  9.68777544e-04)
( 7.67239000e+02,  4.10832973e-04)
( 8.62242386e+02,  4.10832973e-04)
( 9.37753519e+02,  4.10832973e-04)
( 1.01282738e+03,  1.37223308e-04)
( 1.10570710e+03,  1.16558413e-05)
( 1.18473503e+03,  9.07009867e-06)
( 1.43533591e+03,  9.07009867e-06)
( 1.51401193e+03,  7.26904601e-06)
( 1.59168984e+03,  2.68076143e-06)
( 1.66976846e+03,  2.40027416e-07)
( 1.76848471e+03,  3.14182133e-08)
( 1.86091478e+03,  3.14182133e-08)
( 1.94110806e+03,  1.82512495e-09)
( 2.02232711e+03,  1.04035484e-10)
( 2.10187618e+03,  4.32286579e-11)
( 2.17935924e+03,  4.32286579e-11)
( 2.26017610e+03,  1.40764175e-12)
( 2.34416784e+03,  2.41644685e-13)
( 2.42789577e+03,  5.10476204e-14)
( 2.56741177e+03,  5.10476204e-14)
( 2.64915414e+03,  1.47503039e-14)
( 2.73903923e+03,  1.87994070e-15)
( 2.82794426e+03,  8.67664937e-16)
( 2.91835902e+03,  8.67664937e-16)
( 3.00162784e+03,  8.67664937e-16)};\label{line:bypass:kappa5}

\addplot [mark=*, mark size=2, mark options={solid}, dashed, thick, cyan, mark repeat=5]
coordinates {
( 0.00000000e+00,  3.14463285e-01)
( 6.62572764e+01,  2.18968176e-01)
( 9.42103101e+01,  9.30255195e-02)
( 1.75806083e+02,  1.86697205e-02)
( 2.59883307e+02,  1.86697205e-02)
( 3.54083494e+02,  3.09886265e-03)
( 4.41120756e+02,  3.09886265e-03)
( 5.08402811e+02,  3.09886265e-03)
( 5.82680227e+02,  1.04042907e-03)
( 6.18696634e+02,  1.04042907e-03)
( 6.84725429e+02,  1.04042907e-03)
( 7.54544802e+02,  5.94302252e-04)
( 8.22247774e+02,  3.12404821e-04)
( 8.96915785e+02,  6.89046436e-05)
( 9.86401137e+02,  6.89046436e-05)
( 1.05658273e+03,  6.89046436e-05)
( 1.13360699e+03,  6.89046436e-05)
( 1.20329871e+03,  9.04782323e-06)
( 1.28282499e+03,  5.76361457e-06)
( 1.36104601e+03,  5.76361457e-06)
( 1.44698186e+03,  1.83198862e-06)
( 1.52201172e+03,  1.28754173e-06)
( 1.59436218e+03,  7.37558861e-08)
( 1.67379316e+03,  3.07622505e-09)
( 1.75280882e+03,  1.36320284e-10)
( 1.83303721e+03,  4.74463771e-11)
( 1.91453929e+03,  4.70621462e-12)
( 1.99337683e+03,  1.26982763e-12)
( 2.07505006e+03,  1.68182387e-13)
( 2.15577623e+03,  1.20026983e-14)
( 2.25647881e+03,  1.20026983e-14)
( 2.34514123e+03,  9.68892512e-15)
( 2.42609857e+03,  9.68892512e-15)
( 2.50614827e+03,  9.68892512e-15)
( 2.58686679e+03,  8.96587101e-15)
( 2.73807607e+03,  8.96587101e-15)
( 2.81803544e+03,  7.66437361e-15)
( 2.90272006e+03,  5.92904373e-15)
( 2.98697129e+03,  4.77215715e-15)
( 3.07489707e+03,  4.77215715e-15)
( 3.16432095e+03,  4.77215715e-15)
( 3.24989251e+03,  2.60299481e-15)
( 3.34001245e+03,  2.60299481e-15)
( 3.43010041e+03,  2.60299481e-15)
( 3.52076155e+03,  2.60299481e-15)
( 3.61786309e+03,  2.60299481e-15)};\label{line:bypass:kappa6}

\addplot [mark=pentagon*, mark size=2, mark options={solid}, dashed, thick, green, mark repeat=5]
coordinates {
( 0.00000000e+00,  3.14463285e-01)
( 6.60967876e+01,  2.18968207e-01)
( 9.41935216e+01,  9.30247974e-02)
( 1.42958706e+02,  1.95941984e-02)
( 2.09052156e+02,  1.95941984e-02)
( 2.93874862e+02,  9.09189210e-03)
( 3.26830833e+02,  9.09189210e-03)
( 4.10644193e+02,  5.49011082e-03)
( 4.76124128e+02,  5.49011082e-03)
( 5.62457770e+02,  5.49011082e-03)
( 6.47871342e+02,  3.67511117e-03)
( 7.17025811e+02,  1.90267932e-03)
( 7.87506486e+02,  1.90267932e-03)
( 8.56000920e+02,  1.20757395e-03)
( 9.23930212e+02,  1.20757395e-03)
( 9.92440299e+02,  4.31926981e-04)
( 1.06661952e+03,  8.34398457e-05)
( 1.15283001e+03,  8.34398457e-05)
( 1.23936495e+03,  3.61940808e-05)
( 1.32533856e+03,  1.57023343e-05)
( 1.41417143e+03,  2.06250463e-06)
( 1.50360018e+03,  6.78923342e-07)
( 1.59206331e+03,  6.24795712e-08)
( 1.76583639e+03,  6.24795712e-08)
( 1.85694563e+03,  1.82540658e-08)
( 1.93562513e+03,  1.10540485e-08)
( 2.02753630e+03,  2.26754976e-10)
( 2.12121115e+03,  2.47241124e-11)
( 2.21722440e+03,  2.84406105e-12)
( 2.29286482e+03,  2.84406105e-12)
( 2.39018276e+03,  1.46996901e-12)
( 2.49242349e+03,  5.46195078e-13)
( 2.59121277e+03,  9.61661971e-14)
( 2.69404945e+03,  2.61745589e-14)
( 2.78988953e+03,  2.61745589e-14)
( 2.89147047e+03,  5.35060044e-15)
( 3.00058598e+03,  5.35060044e-15)
( 3.09472026e+03,  5.35060044e-15)
( 3.18933704e+03,  5.35060044e-15)
( 3.28700938e+03,  5.35060044e-15)
( 3.38576110e+03,  5.35060044e-15)
( 3.50083951e+03,  5.35060044e-15)
( 3.59706910e+03,  5.35060044e-15)
( 3.69591212e+03,  5.35060044e-15)
( 3.79544350e+03,  5.35060044e-15)};\label{line:bypass:kappa7}

\addplot [mark=triangle, mark size=2, mark options={solid}, black, solid, thick, mark repeat=5]
coordinates {
( 0.00000000e+00,  3.14463285e-01)
( 2.67456985e+02,  3.46730476e-02)
( 5.34913971e+02,  2.32431038e-02)
( 8.02370956e+02,  2.20161595e-02)
( 1.06982794e+03,  1.47224217e-02)
( 1.33728493e+03,  1.34717338e-02)
( 1.60474191e+03,  8.34006444e-03)
( 1.87219890e+03,  3.85489939e-03)
( 2.13965588e+03,  1.51784124e-03)
( 2.40711287e+03,  1.03190432e-03)
( 2.67456985e+03,  7.77961970e-04)
( 2.94202684e+03,  6.90349618e-04)
( 3.20948382e+03,  3.55252119e-04)
( 3.47694081e+03,  3.06576788e-04)
( 3.74439780e+03,  1.89100760e-04)
( 4.01185478e+03,  1.52277321e-04)
( 4.27931177e+03,  1.04037107e-04)
( 4.54676875e+03,  6.84675105e-06)
( 4.81422574e+03,  2.15496340e-05)
( 5.08168272e+03,  1.54376135e-05)
( 5.34913971e+03,  5.12007544e-06)
( 5.61659669e+03,  3.09213090e-06)
( 5.88405368e+03,  2.39977947e-07)
( 6.15151066e+03,  1.57355226e-07)
( 6.41896765e+03,  3.40636722e-10)
( 6.68642463e+03,  1.70854795e-11)
( 6.95388162e+03,  1.66763755e-11)
( 7.22133861e+03,  6.37733728e-13)
( 7.48879559e+03,  6.73163380e-13)
( 7.75625258e+03,  2.74760563e-15)
( 8.02370956e+03,  1.01227576e-15)
( 8.29116655e+03,  0.00000000e+00)};\label{line:bypass:kappa:hdm}

\end{axis}
\end{tikzpicture}
    \caption{Convergence history of $\mathrm{EQP}_{\partial_0}^{(3)}$ method applied to bypass problem with
    no basis truncation for different values of $\kappa$. Legend: 
    $\kappa = 10^{-3}$ (\ref{line:bypass:kappa3}),
    $\kappa = 10^{-4}$ (\ref{line:bypass:kappa4}),
    $\kappa = 10^{-5}$ (\ref{line:bypass:kappa5}),
    $\kappa = 10^{-6}$ (\ref{line:bypass:kappa6}),
    $\kappa = 10^{-7}$ (\ref{line:bypass:kappa7}),
    HDM (\ref{line:bypass:kappa:hdm})
    }
    \label{fig:numexp:bypass:kappa}
\end{figure}

\subsubsection{Influence of the reduced basis size}
Next, we study the influence of the reduced basis size, i.e., $n_k = 2+N_\mubold+p_k+q_k$, on the performance of the
EQP/TR methods.
Even though the method is globally convergent in the limit where $n_k = 2+N_\mubold$ ($p_k=q_k=0$), the optimal choice of
basis size involves a trade-off where larger bases have more predictive capability that potentially allow for better subproblem solutions
and faster convergence at the price of more expensive subproblems. To study this tradeoff, we consider the performance of
the $\mathrm{EQP}_{\partial_0}^{(3)}$ method for five basis truncation strategies (Figure~\ref{fig:numexp:bypass:basis_sz}):
(i) $n_k \leq 20$ ($p_k = q_k \leq 5$), (ii) $n_k \leq 30$ ($p_k = q_k \leq 10$), 
(iii) $n_k \leq 40$ ($p_k = q_k \leq 15$), (iv) $n_k \leq 50$ ($p_k = q_k \leq 20$), and
(v) $n_k \leq \infty $ ($p_k = q_k = k$, i.e., no truncation).
Despite the decreased cost of the subproblem when using a truncated basis, convergence is much
faster when using the more accurate trust-region models that come from larger reduced bases. Only
in the case where the reduced basis is fairly large ($n_k \leq 50$) is the truncation approach competitive
with using all available information. Therefore, in the remainder of this work, we do not truncate the
reduced basis (i.e., we take $p_k=q_k=k$).
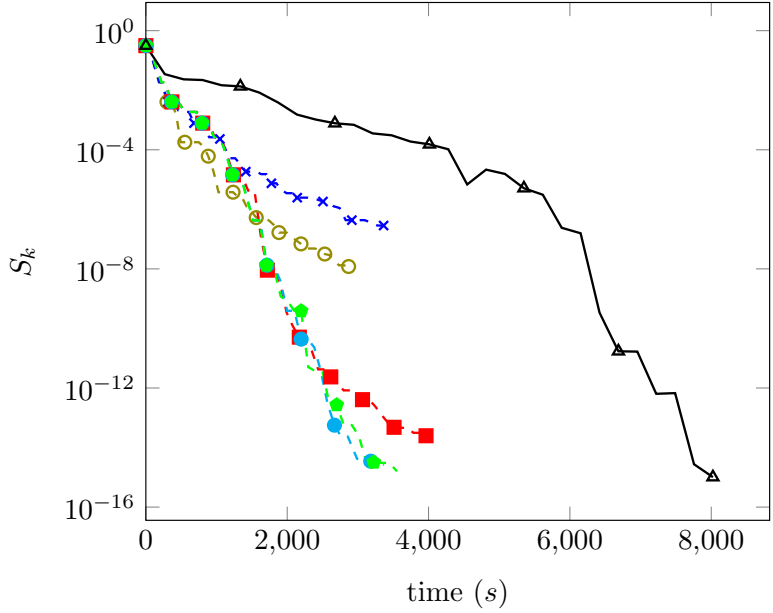
\begin{figure}[H]
    \centering
    \tikzset{every picture/.style={scale=1.1}}
    \begin{tikzpicture}
\begin{axis}[
xmin=0,
ymode=log,
xlabel={time $(s)$},
ylabel={$S_k$}]
\addplot [mark=o, mark size=2, mark options={solid}, dashed, olive, thick, mark repeat=5]
coordinates {
( 0.00000000e+00,  3.14463285e-01)
( 7.22161647e+01,  2.18964731e-01)
( 1.02216971e+02,  9.31033568e-02)
( 1.84780787e+02,  1.86158060e-02)
( 2.17085555e+02,  1.86158060e-02)
( 2.98425917e+02,  4.06542158e-03)
( 3.64887346e+02,  4.06542158e-03)
( 4.29089160e+02,  1.49131383e-03)
( 4.92680141e+02,  1.79526449e-04)
( 5.21122955e+02,  1.79526449e-04)
( 5.51035403e+02,  1.79526449e-04)
( 6.14480613e+02,  1.79526449e-04)
( 6.80785448e+02,  1.79526449e-04)
( 7.48220189e+02,  1.79526449e-04)
( 8.15099227e+02,  1.28687917e-04)
( 8.82939898e+02,  6.10249776e-05)
( 9.57679087e+02,  1.86537173e-05)
( 1.02751268e+03,  3.76664812e-06)
( 1.09762290e+03,  3.76664812e-06)
( 1.16433156e+03,  3.76664812e-06)
( 1.23363837e+03,  3.76664812e-06)
( 1.30054476e+03,  2.66446624e-06)
( 1.35533375e+03,  2.66446624e-06)
( 1.42661637e+03,  1.22219378e-06)
( 1.49421743e+03,  8.40711255e-07)
( 1.56188272e+03,  5.27634161e-07)
( 1.62867392e+03,  5.27634161e-07)
( 1.68220870e+03,  5.27634161e-07)
( 1.75017341e+03,  4.06249763e-07)
( 1.81570252e+03,  2.90862553e-07)
( 1.88329253e+03,  1.65690398e-07)
( 1.93886059e+03,  1.65690398e-07)
( 1.99251717e+03,  1.65690398e-07)
( 2.05931518e+03,  1.17991216e-07)
( 2.12649097e+03,  9.08722321e-08)
( 2.19520897e+03,  6.90840624e-08)
( 2.26329267e+03,  4.83367843e-08)
( 2.33002186e+03,  4.83367843e-08)
( 2.39643102e+03,  4.83367843e-08)
( 2.46265290e+03,  3.96313364e-08)
( 2.53097527e+03,  3.15069166e-08)
( 2.59731087e+03,  2.91361716e-08)
( 2.66491731e+03,  1.92093212e-08)
( 2.73198058e+03,  1.31912024e-08)
( 2.80140512e+03,  1.31912024e-08)
( 2.86889485e+03,  1.20011317e-08)};\label{line:bypass:trunc1}

\addplot [mark=x, mark size=2, mark options={solid}, dashed, blue, thick, mark repeat=5]
coordinates {
( 0.00000000e+00,  3.14463285e-01)
( 6.96813661e+01,  2.18964731e-01)
( 1.01073223e+02,  9.31033568e-02)
( 1.88099993e+02,  1.86158060e-02)
( 2.23586296e+02,  1.86158060e-02)
( 3.12331378e+02,  4.06542158e-03)
( 3.86637389e+02,  4.06542158e-03)
( 4.60345226e+02,  2.08268849e-03)
( 5.35884725e+02,  1.86565882e-03)
( 6.08567332e+02,  1.86565882e-03)
( 6.80874939e+02,  7.85640653e-04)
( 7.52269202e+02,  7.85640653e-04)
( 8.23706785e+02,  5.03567404e-04)
( 8.97328835e+02,  2.63587202e-04)
( 9.72949717e+02,  2.63587202e-04)
( 1.04727641e+03,  2.30941179e-04)
( 1.11759269e+03,  1.14848340e-04)
( 1.18984970e+03,  5.19367614e-05)
( 1.26321258e+03,  5.19367614e-05)
( 1.33793035e+03,  3.29348831e-05)
( 1.41388131e+03,  1.84553861e-05)
( 1.48638934e+03,  1.84553861e-05)
( 1.55747483e+03,  1.84553861e-05)
( 1.63265640e+03,  1.52019669e-05)
( 1.70431799e+03,  1.52019669e-05)
( 1.77640243e+03,  7.49240616e-06)
( 1.84692461e+03,  7.49240616e-06)
( 1.91899514e+03,  4.42516066e-06)
( 1.99183501e+03,  3.48407068e-06)
( 2.06580528e+03,  3.48407068e-06)
( 2.14158216e+03,  2.46491765e-06)
( 2.21629323e+03,  2.46491765e-06)
( 2.28785758e+03,  2.46491765e-06)
( 2.35983027e+03,  2.25531856e-06)
( 2.43387640e+03,  2.09732894e-06)
( 2.50621530e+03,  1.82780190e-06)
( 2.57950914e+03,  1.67290086e-06)
( 2.65355372e+03,  1.27359865e-06)
( 2.74147772e+03,  1.10831657e-06)
( 2.82742478e+03,  4.33061803e-07)
( 2.91458807e+03,  4.33061803e-07)
( 3.00667476e+03,  4.33061803e-07)
( 3.09565440e+03,  4.33061803e-07)
( 3.18227104e+03,  3.63594010e-07)
( 3.27670070e+03,  2.85647551e-07)
( 3.36405979e+03,  2.85647551e-07)};\label{line:bypass:trunc2}

\addplot [mark=square*, mark size=2, mark options={solid}, dashed, red, thick, mark repeat=5]
coordinates {
( 0.00000000e+00,  3.14463285e-01)
( 8.22301153e+01,  2.18964731e-01)
( 1.16289931e+02,  9.31033568e-02)
( 2.19187028e+02,  1.86158060e-02)
( 2.62917556e+02,  1.86158060e-02)
( 3.68232632e+02,  4.06542158e-03)
( 4.57108773e+02,  4.06542158e-03)
( 5.44370106e+02,  2.08268849e-03)
( 6.31728985e+02,  1.86565882e-03)
( 7.18979867e+02,  1.86565882e-03)
( 8.04401865e+02,  7.85640653e-04)
( 8.87478500e+02,  7.85640653e-04)
( 9.73923509e+02,  2.70862147e-04)
( 1.06137216e+03,  2.70862147e-04)
( 1.14861579e+03,  6.69991499e-05)
( 1.23906621e+03,  1.44038287e-05)
( 1.35333323e+03,  1.44038287e-05)
( 1.44717522e+03,  4.26459166e-06)
( 1.53691034e+03,  4.26459166e-06)
( 1.62688177e+03,  1.26503784e-07)
( 1.71935161e+03,  9.05809410e-09)
( 1.80863484e+03,  9.05809410e-09)
( 1.89968237e+03,  3.43889092e-09)
( 1.99120271e+03,  3.42886866e-10)
( 2.08124302e+03,  1.16610841e-10)
( 2.17135705e+03,  5.09796533e-11)
( 2.26161028e+03,  3.70228290e-11)
( 2.34811036e+03,  1.75801931e-11)
( 2.43809591e+03,  4.21743003e-12)
( 2.53025369e+03,  4.21743003e-12)
( 2.61663141e+03,  2.35860252e-12)
( 2.70594911e+03,  1.58565767e-12)
( 2.79778277e+03,  8.31367620e-13)
( 2.88942306e+03,  8.31367620e-13)
( 2.97602064e+03,  5.62680711e-13)
( 3.06545317e+03,  4.05777969e-13)
( 3.15795208e+03,  4.05777969e-13)
( 3.24331588e+03,  2.44681512e-13)
( 3.33174630e+03,  1.26823692e-13)
( 3.42105254e+03,  7.05700815e-14)
( 3.51202057e+03,  4.78661823e-14)
( 3.60850742e+03,  4.59862416e-14)
( 3.69940543e+03,  4.59862416e-14)
( 3.78506905e+03,  3.12359377e-14)
( 3.87531676e+03,  3.12359377e-14)
( 3.96423102e+03,  2.48730615e-14)};\label{line:bypass:trunc3}

\addplot [mark=*, mark size=2, mark options={solid}, dashed, cyan, thick, mark repeat=5]
coordinates {
( 0.00000000e+00,  3.14463285e-01)
( 8.01353357e+01,  2.18964731e-01)
( 1.15326145e+02,  9.31033568e-02)
( 2.17016783e+02,  1.86158060e-02)
( 2.58374768e+02,  1.86158060e-02)
( 3.63736040e+02,  4.06542158e-03)
( 4.50920104e+02,  4.06542158e-03)
( 5.36697509e+02,  2.08268849e-03)
( 6.22584828e+02,  1.86565882e-03)
( 7.10750989e+02,  1.86565882e-03)
( 7.94956621e+02,  7.85640653e-04)
( 8.78754856e+02,  7.85640653e-04)
( 9.65456524e+02,  2.70862147e-04)
( 1.05111531e+03,  2.70862147e-04)
( 1.13802465e+03,  6.69991499e-05)
( 1.22802678e+03,  1.44038287e-05)
( 1.33976714e+03,  1.44038287e-05)
( 1.43084792e+03,  2.41503987e-06)
( 1.52244601e+03,  4.23228041e-07)
( 1.61691431e+03,  4.23228041e-07)
( 1.71311016e+03,  1.32967247e-08)
( 1.80917160e+03,  1.32967247e-08)
( 1.90267337e+03,  3.97908074e-09)
( 1.99986925e+03,  3.91277841e-10)
( 2.10187104e+03,  3.91277841e-10)
( 2.19725899e+03,  4.37673332e-11)
( 2.29215487e+03,  4.37673332e-11)
( 2.38313781e+03,  2.29764906e-11)
( 2.47429612e+03,  3.81251973e-12)
( 2.57069850e+03,  2.99199792e-13)
( 2.66843053e+03,  5.63982209e-14)
( 2.76244164e+03,  2.31377316e-14)
( 2.85833580e+03,  2.31377316e-14)
( 2.98874605e+03,  4.19371386e-15)
( 3.08641380e+03,  3.47065975e-15)
( 3.18169740e+03,  3.47065975e-15)
( 3.27813901e+03,  3.47065975e-15)
( 3.36758077e+03,  2.60299481e-15)};\label{line:bypass:trunc4}

\addplot [mark=pentagon*, mark size=2, mark options={solid}, dashed, green, thick, mark repeat=5]
coordinates {
( 0.00000000e+00,  3.14463285e-01)
( 8.21824761e+01,  2.18964731e-01)
( 1.17103270e+02,  9.31033568e-02)
( 2.23388485e+02,  1.86158060e-02)
( 2.64540822e+02,  1.86158060e-02)
( 3.68510144e+02,  4.06542158e-03)
( 4.54676937e+02,  4.06542158e-03)
( 5.39938043e+02,  2.08268849e-03)
( 6.25302563e+02,  1.86565882e-03)
( 7.11433361e+02,  1.86565882e-03)
( 7.94849289e+02,  7.85640653e-04)
( 8.76915435e+02,  7.85640653e-04)
( 9.62757786e+02,  2.70862147e-04)
( 1.05069613e+03,  2.70862147e-04)
( 1.13523653e+03,  6.69991499e-05)
( 1.22440359e+03,  1.44038287e-05)
( 1.33623795e+03,  1.44038287e-05)
( 1.42674146e+03,  2.41503987e-06)
( 1.51740603e+03,  4.23228041e-07)
( 1.60929611e+03,  4.23228041e-07)
( 1.70489989e+03,  1.32967247e-08)
( 1.80059844e+03,  1.32967247e-08)
( 1.90883345e+03,  1.17766687e-09)
( 2.00502108e+03,  8.43891206e-10)
( 2.09960835e+03,  3.88131544e-10)
( 2.19707612e+03,  3.88131544e-10)
( 2.29724769e+03,  5.03144436e-12)
( 2.39719417e+03,  3.66212448e-12)
( 2.49839096e+03,  3.66212448e-12)
( 2.60039878e+03,  2.71289903e-13)
( 2.70061449e+03,  2.71289903e-13)
( 2.80094126e+03,  5.66874425e-14)
( 2.91474384e+03,  5.66874425e-14)
( 3.01314930e+03,  2.44392290e-14)
( 3.12111639e+03,  3.18143810e-15)
( 3.22590420e+03,  3.18143810e-15)
( 3.33777804e+03,  3.03682728e-15)
( 3.44814115e+03,  3.03682728e-15)
( 3.55157529e+03,  1.59071905e-15)};\label{line:bypass:trunc5}

\addplot [mark=triangle, mark size=2, mark options={solid}, solid, black, thick, mark repeat=5]
coordinates {
( 0.00000000e+00,  3.14463285e-01)
( 2.67456985e+02,  3.46730476e-02)
( 5.34913971e+02,  2.32431038e-02)
( 8.02370956e+02,  2.20161595e-02)
( 1.06982794e+03,  1.47224217e-02)
( 1.33728493e+03,  1.34717338e-02)
( 1.60474191e+03,  8.34006444e-03)
( 1.87219890e+03,  3.85489939e-03)
( 2.13965588e+03,  1.51784124e-03)
( 2.40711287e+03,  1.03190432e-03)
( 2.67456985e+03,  7.77961970e-04)
( 2.94202684e+03,  6.90349618e-04)
( 3.20948382e+03,  3.55252119e-04)
( 3.47694081e+03,  3.06576788e-04)
( 3.74439780e+03,  1.89100760e-04)
( 4.01185478e+03,  1.52277321e-04)
( 4.27931177e+03,  1.04037107e-04)
( 4.54676875e+03,  6.84675105e-06)
( 4.81422574e+03,  2.15496340e-05)
( 5.08168272e+03,  1.54376135e-05)
( 5.34913971e+03,  5.12007544e-06)
( 5.61659669e+03,  3.09213090e-06)
( 5.88405368e+03,  2.39977947e-07)
( 6.15151066e+03,  1.57355226e-07)
( 6.41896765e+03,  3.40636722e-10)
( 6.68642463e+03,  1.70854795e-11)
( 6.95388162e+03,  1.66763755e-11)
( 7.22133861e+03,  6.37733728e-13)
( 7.48879559e+03,  6.73163380e-13)
( 7.75625258e+03,  2.74760563e-15)
( 8.02370956e+03,  1.01227576e-15)
( 8.29116655e+03,  0.00000000e+00)};\label{line:bypass:hdm}

\end{axis}
\end{tikzpicture}
    \caption{Convergence history of $\mathrm{EQP}_{\partial_0}^{(3)}$ method applied to bypass problem with different basis truncation levels and $\kappa=10^{-4}$. Legend: 
    $n_k \leq 20$ (\ref{line:bypass:trunc1}),
    $n_k \leq 30$ (\ref{line:bypass:trunc2}),
    $n_k \leq 40$ (\ref{line:bypass:trunc3}),
    $n_k \leq 50$ (\ref{line:bypass:trunc4}),
    $n_k \leq \infty$ (\ref{line:bypass:trunc5}),
    HDM (\ref{line:bypass:hdm})
    }
    \label{fig:numexp:bypass:basis_sz}
\end{figure}

\subsubsection{Influence of the empirical quadrature procedure constraints}
Finally, we study the influence of the EQP constraints by directly comparing the methods in Table~\ref{tab:numexp:var_model} with the
gradient tolerance $\kappa = 10^{-4}$ and retaining the full reduced basis (no truncation) (Figure~\ref{fig:numexp:bypass:basis_sz}).
Recall that all methods satisfy the criteria in Theorem~\ref{the:qoi_grad_errbnd} and are therefore globally convergent, with
$\mathrm{EQP}^{(1)}$ being lightest method (e.g., fewest EQP constraints and snapshots) that
ensures global convergence. The other EQP variants include additional snapshots or EQP constraints with
the goal of improving the convergence rate.

First, we observe the $\mathrm{EQP}^{(1)}$ and $\mathrm{EQP}_{\partial_0}^{(1)}$ are
converging extremely slowly to the optimal value of the objective function. While they initially provide some
benefit relative to directly using the $\mathrm{HDM}$ for very weak tolerances, the HDM method
converges much faster. This confirms the global convergence result for this problem, but also shows global
convergence is not sufficient for the EQP method to be beneficial, and adding additional snapshots at the
first iteration is not sufficient to overcome the limitation.
 
Next, we add the sensitivity residual constraint and consider the $\mathrm{EQP}^{(2)}$
and $\mathrm{EQP}_{\partial_0}^{(2)}$ methods. The convergence of these methods
is significantly faster than the corresponding $\mathrm{EQP}^{(1)}$ and $\mathrm{EQP}_{\partial_0}^{(1)}$
method (Figure~\ref{fig:numexp:bypass:mthd}) suggesting the sensitivity residual EQP constraint is important to obtain a high-quality
sample mesh in the optimization setting.  Furthermore, the initial sensitivity snapshots do not have a significant
impact on the convergence rate for this problem. 

Next, we incorporate the quantity of interest constraint and consider $\mathrm{EQP}^{(3)}$
and $\mathrm{EQP}_{\partial_0}^{(3)}$ methods. The convergence of these methods
is significantly faster than the corresponding $\mathrm{EQP}^{(2)}$ and $\mathrm{EQP}_{\partial_0}^{(2)}$
method (Figure~\ref{fig:numexp:bypass:mthd}) suggesting the QoI EQP constraint further helps improve the convergence
rate of the hyperreduced trust-region framework. Both methods perform favorably compared to the
HDM-only and ROM-only methods, and the initial sensitivity snapshots provide a slightly
advantage, making $\mathrm{EQP}_{\partial_0}^{(3)}$ the best overall method.

Figure~\ref{fig:numexp:bypass:eqp_split} shows the cost breakdown of $\mathrm{EQP}_{\partial_0}^{(3)}$ as a function
of trust-region iteration.
The cost at each iteration is split into four main sources at $k$th trust-region center: (i) construction of the basis $\Phibold_k$ at the beginning using (\ref{eqn:tr:construct_basis_sens}) (not including snapshot computations, only the compression and orthogonalization), (ii) EQP training in (\ref{eqn:eqp:linprog}) (solution of the linear program and ROM solves required to form the constraint matrix), (iii) trust-region subproblem in (\ref{eqn:tr:subprob}), and (iv) HDM evaluations to compute snapshots and evaluate steps in (\ref{eqn:tr:redu_ratio}). It is clear that (iv) dominates the computational cost at each iteration as the average costs of (i)-(iii) are at least one order smaller than (iv) in the magnitude. The oscillations in the cost to solve the HDM at early iterations are a result of large shape variations that require a different number of nonlinear iterations to converge. The shape change stabilizes when it is close the optimal shape, so the cost of (iv) stabilizes. As the optimizer iterates, the cost of (i)-(ii) trend to increase since the size of $\Phibold_k$ and element usage increase. A larger size of reduced basis increases the cost of solving the linear program in (\ref{eqn:eqp:linprog}) because the
number of constraints scales with $n_k$. Lastly, we observe that for this problem, the cost of solving the trust-region subproblem
and the cost to compute the EQP weights are similar.

\begin{figure}[H]
\centering
\tikzset{every picture/.style={scale=1.1}}
\input{_py/bypass/bypass_diff_mthd.tikz}
\caption{Convergence history of the optimization methods in Table~\ref{tab:numexp:var_model} applied to bypass problem with $\kappa=10^{-4}$ and no basis truncation. Legend:     
HDM (\ref{line:bypass:mthd:hdm}),
ROM (\ref{line:bypass:mthd:rom_adj}),
$\mathrm{ROM}_{\partial_0}$ (\ref{line:bypass:mthd:rom_adj_dU0}),
$\mathrm{EQP}^{(1)}$ (\ref{line:bypass:mthd:eqp_adj}),
$\mathrm{EQP}^{(2)}$ (\ref{line:bypass:mthd:eqp_adj_constr_no_qoi}),
$\mathrm{EQP}^{(3)}$ (\ref{line:bypass:mthd:eqp_adj_constr}),
$\mathrm{EQP}_{\partial_0}^{(1)}$ (\ref{line:bypass:mthd:eqp_adj_dU0}),
$\mathrm{EQP}_{\partial_0}^{(2)}$ (\ref{line:bypass:mthd:eqp_adj_dU0_constr_no_qoi}),
$\mathrm{EQP}_{\partial_0}^{(3)}$ (\ref{line:bypass:mthd:eqp_adj_dU0_constr}).}
\label{fig:numexp:bypass:mthd}
\end{figure}

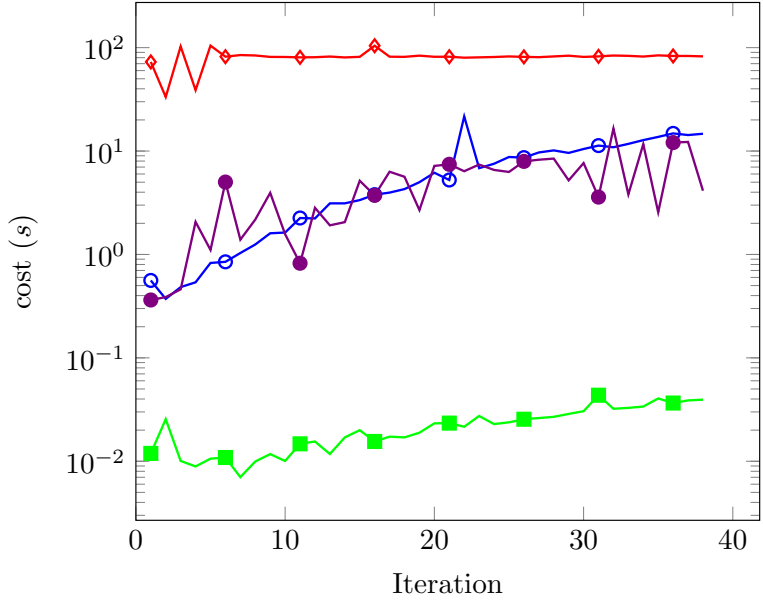
\begin{figure}[H]
\centering
\tikzset{every picture/.style={scale=1.1}}
\begin{tikzpicture}
\begin{axis}[
xmin=0,
ymode=log,
xlabel={Iteration},
ylabel={cost $(s)$}]
\addplot [mark=square*, mark size=2, mark options={solid}, thick, solid, green, mark repeat=5]
coordinates {
( 1.00000000e+00,  1.19093000e-02)
( 2.00000000e+00,  2.55347000e-02)
( 3.00000000e+00,  1.00715000e-02)
( 4.00000000e+00,  8.90860000e-03)
( 5.00000000e+00,  1.05879000e-02)
( 6.00000000e+00,  1.08738000e-02)
( 7.00000000e+00,  7.02060000e-03)
( 8.00000000e+00,  9.94510000e-03)
( 9.00000000e+00,  1.17280000e-02)
( 1.00000000e+01,  1.00605000e-02)
( 1.10000000e+01,  1.47404000e-02)
( 1.20000000e+01,  1.55243000e-02)
( 1.30000000e+01,  1.17548000e-02)
( 1.40000000e+01,  1.70178000e-02)
( 1.50000000e+01,  1.99786000e-02)
( 1.60000000e+01,  1.55364000e-02)
( 1.70000000e+01,  1.72883000e-02)
( 1.80000000e+01,  1.70143000e-02)
( 1.90000000e+01,  1.88912000e-02)
( 2.00000000e+01,  2.32531000e-02)
( 2.10000000e+01,  2.33820000e-02)
( 2.20000000e+01,  2.15582000e-02)
( 2.30000000e+01,  2.74067000e-02)
( 2.40000000e+01,  2.28990000e-02)
( 2.50000000e+01,  2.37917000e-02)
( 2.60000000e+01,  2.54971000e-02)
( 2.70000000e+01,  2.61680000e-02)
( 2.80000000e+01,  2.68799000e-02)
( 2.90000000e+01,  2.86889000e-02)
( 3.00000000e+01,  3.05016000e-02)
( 3.10000000e+01,  4.35232000e-02)
( 3.20000000e+01,  3.21904000e-02)
( 3.30000000e+01,  3.28229000e-02)
( 3.40000000e+01,  3.38104000e-02)
( 3.50000000e+01,  4.05058000e-02)
( 3.60000000e+01,  3.65695000e-02)
( 3.70000000e+01,  3.87528000e-02)
( 3.80000000e+01,  3.94266000e-02)};\label{line:bypass:eqp:basis}

\addplot [mark=o, mark size=2, mark options={solid}, thick, solid, blue, mark repeat=5]
coordinates {
( 1.00000000e+00,  5.60660100e-01)
( 2.00000000e+00,  3.71648900e-01)
( 3.00000000e+00,  4.83221200e-01)
( 4.00000000e+00,  5.37951100e-01)
( 5.00000000e+00,  8.27670500e-01)
( 6.00000000e+00,  8.48463900e-01)
( 7.00000000e+00,  1.03291650e+00)
( 8.00000000e+00,  1.24840120e+00)
( 9.00000000e+00,  1.60268210e+00)
( 1.00000000e+01,  1.62877820e+00)
( 1.10000000e+01,  2.25100690e+00)
( 1.20000000e+01,  2.22928030e+00)
( 1.30000000e+01,  3.12037970e+00)
( 1.40000000e+01,  3.12023660e+00)
( 1.50000000e+01,  3.35489180e+00)
( 1.60000000e+01,  3.78797440e+00)
( 1.70000000e+01,  3.96388890e+00)
( 1.80000000e+01,  4.28857710e+00)
( 1.90000000e+01,  4.99837040e+00)
( 2.00000000e+01,  6.17005430e+00)
( 2.10000000e+01,  5.24801140e+00)
( 2.20000000e+01,  2.16036879e+01)
( 2.30000000e+01,  6.83147110e+00)
( 2.40000000e+01,  7.54009150e+00)
( 2.50000000e+01,  8.76085710e+00)
( 2.60000000e+01,  8.63637170e+00)
( 2.70000000e+01,  9.69047120e+00)
( 2.80000000e+01,  1.01448032e+01)
( 2.90000000e+01,  9.59282920e+00)
( 3.00000000e+01,  1.04680708e+01)
( 3.10000000e+01,  1.12921667e+01)
( 3.20000000e+01,  1.08857552e+01)
( 3.30000000e+01,  1.17329272e+01)
( 3.40000000e+01,  1.27577747e+01)
( 3.50000000e+01,  1.37034223e+01)
( 3.60000000e+01,  1.48174784e+01)
( 3.70000000e+01,  1.42804658e+01)
( 3.80000000e+01,  1.46959773e+01)};\label{line:bypass:eqp:eqp}

\addplot [mark=diamond, mark size=2, mark options={solid}, thick, solid, red, mark repeat=5]
coordinates {
( 1.00000000e+00,  7.28031519e+01)
( 2.00000000e+00,  3.33257269e+01)
( 3.00000000e+00,  1.02744543e+02)
( 4.00000000e+00,  3.88306940e+01)
( 5.00000000e+00,  1.04577947e+02)
( 6.00000000e+00,  8.18030539e+01)
( 7.00000000e+00,  8.47714204e+01)
( 8.00000000e+00,  8.40042452e+01)
( 9.00000000e+00,  8.13733063e+01)
( 1.00000000e+01,  8.12269992e+01)
( 1.10000000e+01,  8.05476317e+01)
( 1.20000000e+01,  8.08165739e+01)
( 1.30000000e+01,  8.20419996e+01)
( 1.40000000e+01,  8.04160472e+01)
( 1.50000000e+01,  8.15000510e+01)
( 1.60000000e+01,  1.04370705e+02)
( 1.70000000e+01,  8.17480387e+01)
( 1.80000000e+01,  8.14586815e+01)
( 1.90000000e+01,  8.36929656e+01)
( 2.00000000e+01,  8.15974730e+01)
( 2.10000000e+01,  8.16895673e+01)
( 2.20000000e+01,  7.99741784e+01)
( 2.30000000e+01,  8.05679868e+01)
( 2.40000000e+01,  8.11805932e+01)
( 2.50000000e+01,  8.22313465e+01)
( 2.60000000e+01,  8.14252741e+01)
( 2.70000000e+01,  8.08853526e+01)
( 2.80000000e+01,  8.22405281e+01)
( 2.90000000e+01,  8.36016644e+01)
( 3.00000000e+01,  8.13744794e+01)
( 3.10000000e+01,  8.23031146e+01)
( 3.20000000e+01,  8.39102443e+01)
( 3.30000000e+01,  8.33092999e+01)
( 3.40000000e+01,  8.20027094e+01)
( 3.50000000e+01,  8.42348920e+01)
( 3.60000000e+01,  8.30990357e+01)
( 3.70000000e+01,  8.30948210e+01)
( 3.80000000e+01,  8.23275734e+01)};\label{line:bypass:eqp:assess}

\addplot [mark=*, mark size=2, mark options={solid}, thick, solid, violet, mark repeat=5]
coordinates {
( 1.00000000e+00,  3.62794800e-01)
( 2.00000000e+00,  3.86905800e-01)
( 3.00000000e+00,  4.59995100e-01)
( 4.00000000e+00,  2.06572560e+00)
( 5.00000000e+00,  1.10563810e+00)
( 6.00000000e+00,  5.01360670e+00)
( 7.00000000e+00,  1.39070140e+00)
( 8.00000000e+00,  2.17535000e+00)
( 9.00000000e+00,  3.94389140e+00)
( 1.00000000e+01,  1.56960180e+00)
( 1.10000000e+01,  8.22365500e-01)
( 1.20000000e+01,  2.82486550e+00)
( 1.30000000e+01,  1.91296380e+00)
( 1.40000000e+01,  2.05337450e+00)
( 1.50000000e+01,  5.16223710e+00)
( 1.60000000e+01,  3.72751340e+00)
( 1.70000000e+01,  6.30858600e+00)
( 1.80000000e+01,  5.63973170e+00)
( 1.90000000e+01,  2.68697770e+00)
( 2.00000000e+01,  7.18246440e+00)
( 2.10000000e+01,  7.44458910e+00)
( 2.20000000e+01,  6.38379740e+00)
( 2.30000000e+01,  7.40266650e+00)
( 2.40000000e+01,  6.56902970e+00)
( 2.50000000e+01,  6.27801730e+00)
( 2.60000000e+01,  7.94534580e+00)
( 2.70000000e+01,  8.24410290e+00)
( 2.80000000e+01,  8.45334450e+00)
( 2.90000000e+01,  5.20586600e+00)
( 3.00000000e+01,  7.65265070e+00)
( 3.10000000e+01,  3.58807190e+00)
( 3.20000000e+01,  1.64994781e+01)
( 3.30000000e+01,  3.80960080e+00)
( 3.40000000e+01,  1.16598541e+01)
( 3.50000000e+01,  2.55969720e+00)
( 3.60000000e+01,  1.20764404e+01)
( 3.70000000e+01,  1.22496040e+01)
( 3.80000000e+01,  4.13549100e+00)};\label{line:bypass:eqp:subprob}

\end{axis}
\end{tikzpicture}
\caption{Cost breakdown at each major iteration for $\mathrm{EQP}_{\partial_0}^{(3)}$ applied to the bypass problem. Legend:     
Build $\Phibold_k$ (i) (\ref{line:bypass:eqp:basis}),
compute the EQP weights (ii) (\ref{line:bypass:eqp:eqp}),
solve the TR subproblem (iii) (\ref{line:bypass:eqp:subprob}), and 
compute snapshots by solving the HDM (iv) (\ref{line:bypass:eqp:assess}).
}
\label{fig:numexp:bypass:eqp_split}
\end{figure}
Table \ref{tab:bypass:hist} presents the convergence history of $\mathrm{EQP}_{\partial_0}^{(3)}$ at selected iterations. The method converges to a first-order critical point as $\norm{\nabla f(\mubold_k)} \rightarrow 0$ in 36 major iterations. After only 25 major iterations, the gradient has been reduced by five orders of magnitude. At earlier iterations, the basis is small and the sample mesh
is highly sparse, which makes the hyperreduced model only match the HDM to a few digits. However, progress toward the optimal solution is still achieved ($\norm{\nabla f(\mubold_k)}$ and $S_k$ decrease on the next iteration) because the convergence criteria for the trust-region method are satisfied by construction of the  $\mathrm{EQP}_{\partial_0}^{(3)}$ method. 
The basis grows at each iteration due to our choice to not apply truncation, and the sample mesh is picking up more
elements because the EQP tolerance in (\ref{eqn:tr:strong_cond}) are tightening as $\norm{\nabla m_k(\mubold_k)}$ decreases.
Figure~\ref{fig:numexp:bypass:samp_msh} shows several samples meshes produced throughout the $\mathrm{EQP}_{\partial_0}^{(3)}$ algorithm. The sampled elements are mainly distributed in the optimization region $\Omega_\mathtt{wd}$ with some elements on the inlet as well.
\begin{table}[H]
    \centering
    \begin{tabular}{c c c c c c c c} 
        \hline
        Iter & $m_k(\mubold_k)$ & $\abs{f(\mubold_k)-m_k(\mubold_k)}$ & $\norm{\nabla f(\mubold_k)} $ & $S_k$ & $\varrho_k$ & $\norm{\rhobold_k}_0 (\%)$& $n_k$\\ [0.5ex] 
        \hline
        \input{_dat/bypass/bypass_hist.dat}
    \end{tabular}
    \caption{Convergence history of the $\mathrm{EQP}_{\partial_0}^{(3)}$ method applied to the bypass problem
      with $\kappa=10^{-4}$ and no basis truncation.}
    \label{tab:bypass:hist}
\end{table}
\begin{figure}[H]
    \centering
    \begin{minipage}[t]{0.49\textwidth}
        \centering
        \includegraphics[width=\textwidth]{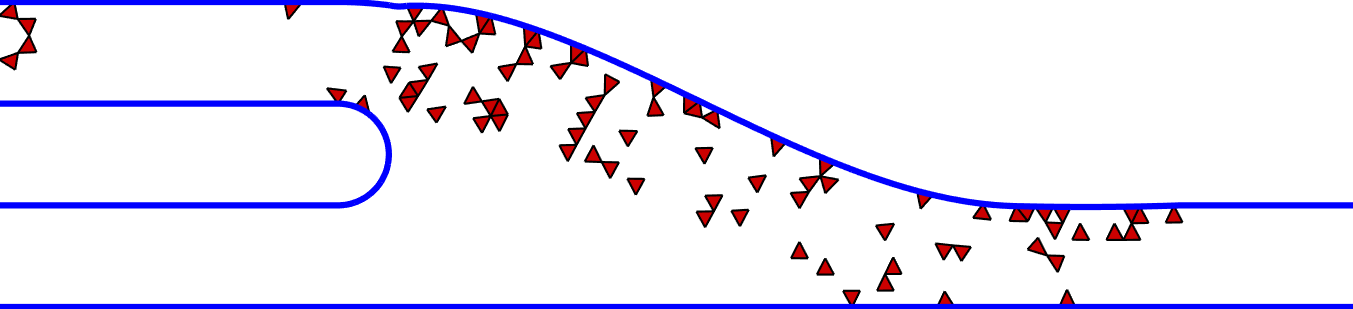}
    \end{minipage}
    \hfill
    \begin{minipage}[t]{0.49\textwidth}
        \centering
        \includegraphics[width=\textwidth]{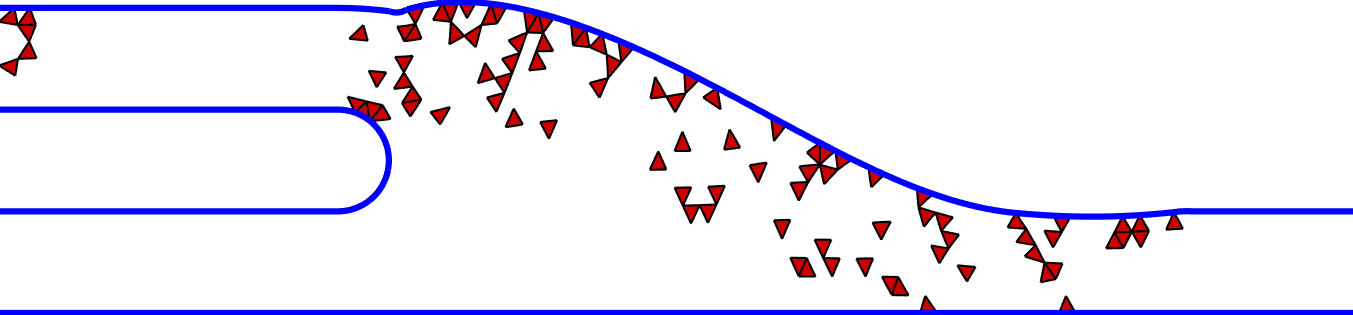}
    \end{minipage}

    \vspace{3.00mm}
    
    \begin{minipage}[t]{0.49\textwidth}
        \centering
        \includegraphics[width=\textwidth]{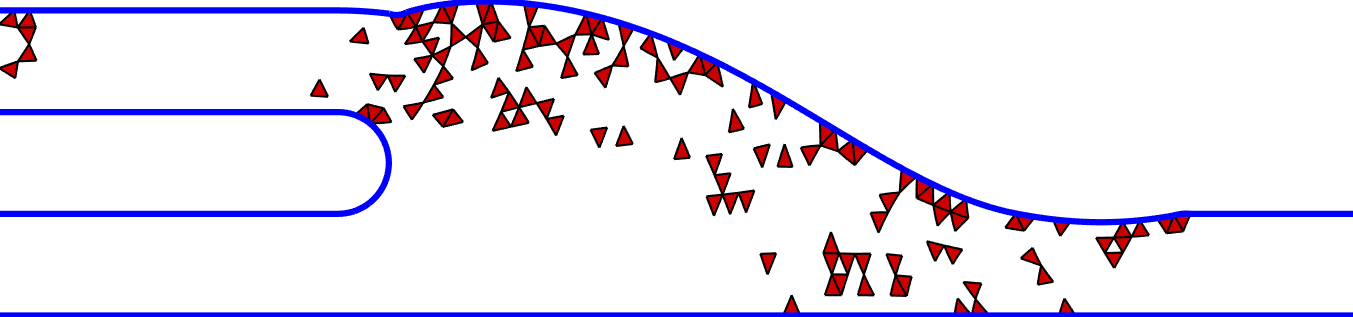}
    \end{minipage}
    \hfill
    \begin{minipage}[t]{0.49\textwidth}
        \centering
        \includegraphics[width=\textwidth]{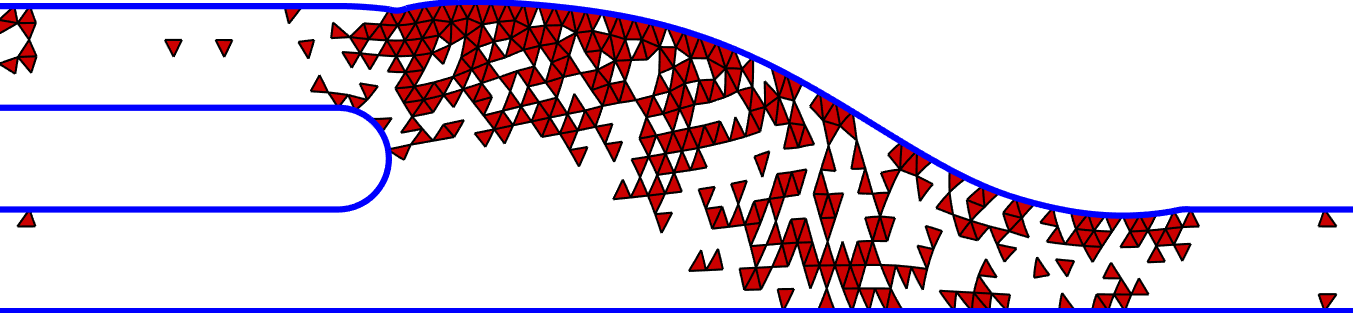}
    \end{minipage}
    \caption{Domain shape and sample mesh at the beginning of major iterations $k=1,2,3$ and 20 (\textit{left-to-right, top-to-bottom}) of
      the $\mathrm{EQP}_{\partial_0}^{(3)}$ method for the bypass problem. Because this problem using
      a continuous finite element discretization, only the nodes associated with the elements in red are required to
      construct the EQP residual ($\tilde\rbm$).}
    \label{fig:numexp:bypass:samp_msh}
\end{figure}

Finally, we compare three methods---$\mathrm{HDM}$, $\mathrm{ROM}$, $\mathrm{EQP}_{\partial_0}^{(3)}$---in terms of the computational cost required to achieve a given cutoff tolerance. We will consider four cutoff levels that represent a loose ($\epsilon=10^{-3}$), moderate ($\epsilon=10^{-5}$), tight ($\epsilon=10^{-8}$), and very tight ($\epsilon=10^{-10}$) accuracy requirement. First we observe the overall speedup of the $\mathrm{EQP}_{\partial_0}^{(3)}$ method is between $2.6-3.5$ for
this problem, which primarily comes from the reduction of HDM solves by a factor of $2-3$, whereas the speedup of the
corresponding ROM method is only between $1.1-1.7$. For the $\mathrm{EQP}_{\partial_0}^{(3)}$ method, the most
substantial speedup comes from the loose and moderate tolerances. The speedup reduction at tighter tolerances comes
from the fact that both the basis and sample mesh are growing at each iteration so EQP training and model evaluations
are becoming more expensive.

\begin{table}[H]
\centering
\begin{tabular}{c|c|c c c c c} 
    \hline
    Method & $\epsilon$ & \# HDM & \# ROM &\# EQP & cost(s) & speedup\\ [0.5ex] 
    \hline
    \input{_dat/bypass/bypass_cmpr.dat}
\end{tabular}
\caption{Performance comparison for various methods applied to the bypass problem. The speedup is defined as the
 cost of a particular model divided by the cost of the $\mathrm{HDM}$ method at the same cutoff tolerance ($\epsilon$).}
\label{tab:numexp:bypass:hist_cutoff}
\end{table}

\subsection{Inverse design of an airfoil in inviscid, subsonic flow}
\label{sec:numexp:airfoil}
Next, we consider aerodynamic inverse shape design whereby we aim to recover an RAE2822 airfoil from only
its flow field starting from a NACA0012 airfoil. Let $\Omega\subset\Rbb^d$ ($d=2$) be the region around a NACA0012
airfoil with cord length $L=1$ that extends $8L$ from the leading edge, and consider steady, inviscid, compressible
flow governed by the Euler equations
\begin{equation} \label{eqn:numexp:airfoil:euler}
 \pder{}{x_j}\left(\rho v_j\right) = 0, \quad
\pder{}{x_j}\left(\rho v_iv_j+P\delta_{ij}\right) = 0, \quad
\pder{}{x_j}\left(\left[\rho E+P\right]v_j\right) = 0,
\end{equation}
where $i=1,\dots,d$ and summation is implied over
the repeated index $j=1,\dots,d$. The density of the fluid
$\func{\rho}{\Omega}{\Rbb_{>0}}$, the velocity of
the fluid $\func{v_i}{\Omega}{\Rbb}$ in the $x_i$
direction for $i=1,\dots,d$, and the total energy of the fluid
$\func{E}{\Omega}{\Rbb_{>0}}$ are implicitly defined
as the solution of (\ref{eqn:numexp:airfoil:euler}).
For a calorically ideal fluid, the pressure of the fluid,
$\func{P}{\Omega}{\Rbb_{>0}}$, is related to the energy
via the ideal gas law
\begin{equation}
 P = (\gamma-1)\left(\rho E - \frac{\rho v_i v_i}{2}\right),
\end{equation}
where $\gamma\in\Rbb_{>0}$ is the ratio of specific heats. We stack the conservative variables into a 
state vector $U : \Omega \rightarrow \Rbb^{d+2}$ with $U : x \mapsto (\rho(x),\rho(x)v(x), \rho(x)E(x))$.
A farfield boundary condition is applied to the cutoff surface of the domain and a slip wall condition
($v\cdot n = 0$) is applied on the airfoil surface.

The objective of this problem is to match the RAE2822 flow field, denoted $U_\mathrm{RAE2822} : \Omega \rightarrow \Rbb^{d+2}$
by adjusting the airfoil surface, which is parametrized using a Bezier curve with 18 control points and the deformation is extended
to the rest of the domain using linear elasticity (Section~\ref{eqn:hdm:shapeopt:mesh_coord}). The parameter vector $\mubold$ denotes the collection of Bezier
control points. This leads to the following PDE-constrained shape optimization problem 
\begin{equation}\label{eqn:numexp:airfoil:obj_cont}
  \underset{U,\mubold}{\text{minimize}} ~~\frac{1}{2}\int_{\Omega(\mubold)} \norm{U - U_\mathrm{RAE2822}}^2 ~ dV, \quad
  \text{subject to:} ~~ \Lcal(U;\mubold)=0,
\end{equation}
where $\Lcal$ is the differential operator that includes the Euler equations (\ref{eqn:numexp:airfoil:euler}) and appropriate boundary conditions.
The starting point and optimal solution for this problem are shown in Figure~\ref{fig:numexp:airfoil:soln}. Because of this choice of objective
function, the optimal value of the objective function is zero so we use the first definition of $S_k$ in (\ref{eqn:numexp:sk}) throughout
this section.
\begin{figure}[H]
    \centering
    \begin{minipage}[t]{0.49\textwidth}
        \centering
        \includegraphics[width=\textwidth]{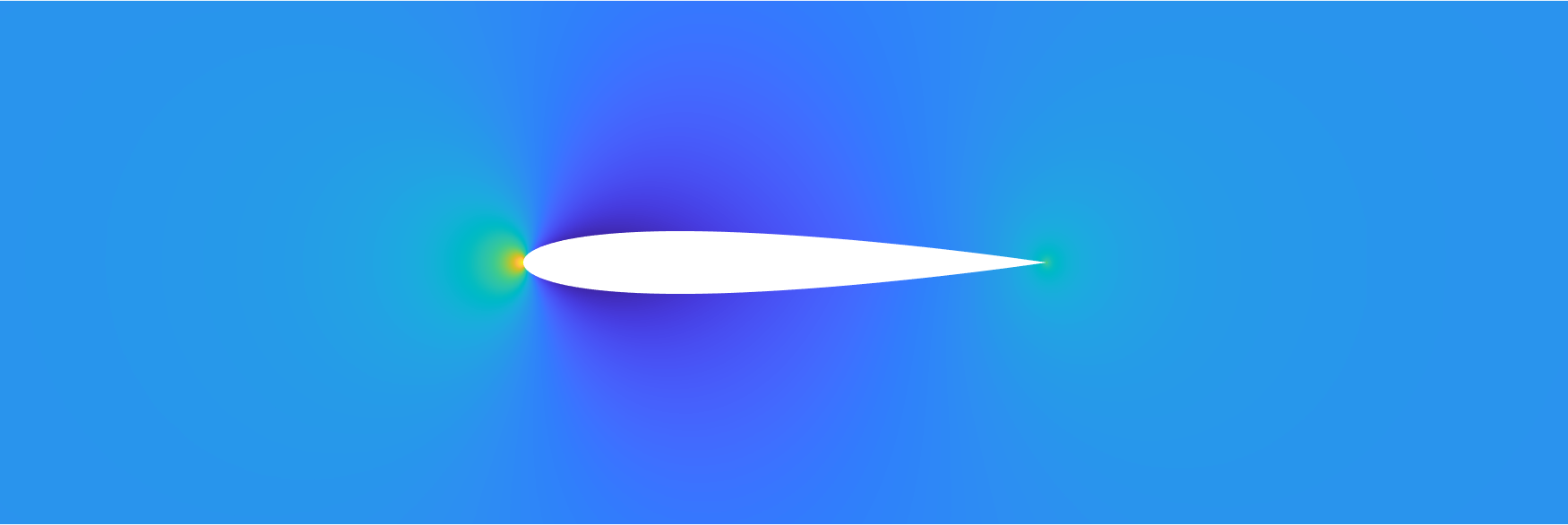}
    \end{minipage}
    \hfill
    \begin{minipage}[t]{0.49\textwidth}
        \centering
        \includegraphics[width=\textwidth]{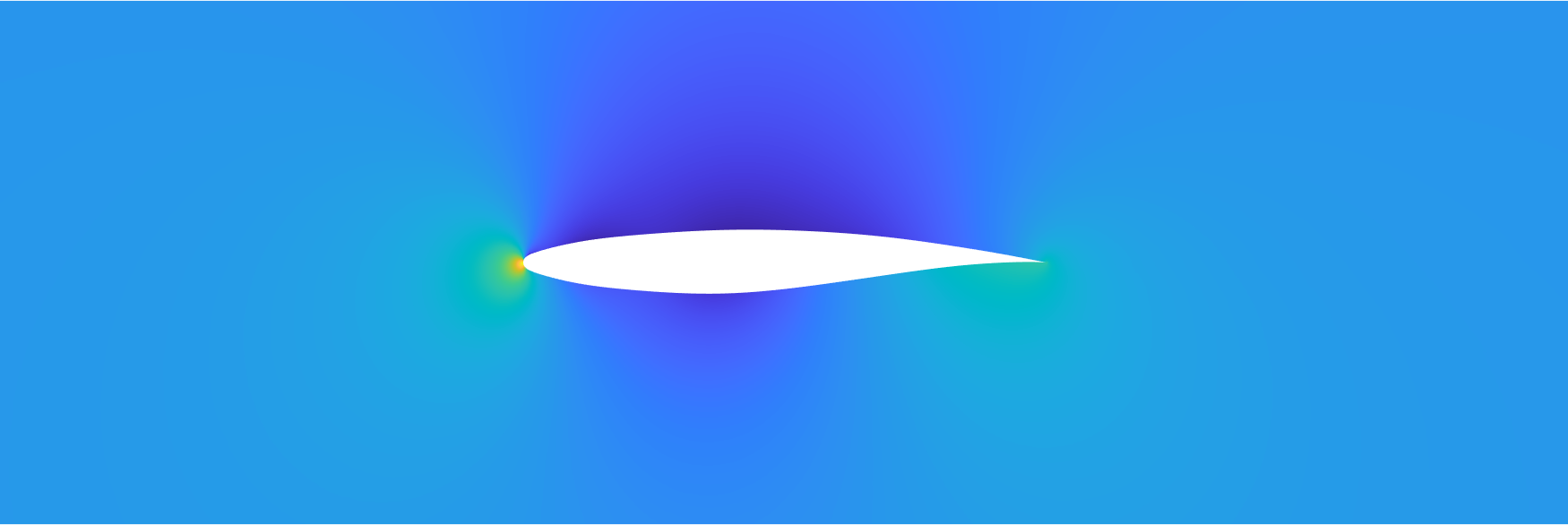}
    \end{minipage}
    \caption{The domain shape and density for the airfoil problem at the starting configuration (\textit{left}) and optimal solution (\textit{right}).}
    \label{fig:numexp:airfoil:soln}
\end{figure}

The governing equations and quantity of interest are discretized using a nodal discontinuous Galerkin method with $\Pcal^2$
triangular elements to yield a discrete optimization problem of the form (\ref{eqn:hdm:opt:full_space}).
A quadratic mesh consisting of $1524$ triangular elements was generated using DistMesh \cite{persson_simple_2004}
and used for all numerical experiments.

At the discrete level, the objective function in (\ref{eqn:numexp:airfoil:obj_cont}) can be written as
\begin{equation}
 j(\ubm, \mubold) = \frac{1}{2}(\ubm-\ubm_\mathrm{RAE2822})^T\Mbm(\ubm-\ubm_\mathrm{RAE2822}),
\end{equation}
where $\Mbm\in\Rbb^{N_\ubm \times N_\ubm}$ is the DG mass matrix and $\ubm_\mathrm{RAE2822}\in\Rbb^{N_\ubm}$ is the
RAE2822 state ($U_\mathrm{RAE2822}$) in algebraic form. Then, the reduced quantity of interest is
\begin{equation}\label{eqn:numexp:airfoil:obj_disc_reduc}
 \hat{j}_\Phibold(\hat\ybm,\mubold) = \frac{1}{2}(\Phibold\hat\ybm-\ubm_\mathrm{RAE2822})^T\Mbm(\Phibold\hat\ybm-\ubm_\mathrm{RAE2822}) = \frac{1}{2} \hat\ybm^T \hat\Mbm \hat\ybm + \hat\bbm^T\hat\ybm + \hat{c},
\end{equation}
where $\hat\Mbm = \Phibold^T\Mbm\Phibold\in\Rbb^{n\times n}$, $\hat\bbm = -\Phibold^T\ubm_\mathrm{RAE2822} \in \Rbb^n$, and $\hat{c} = \frac{1}{2}\norm{\ubm_\mathrm{RAE2822}}^2 \in \Rbb$. These are all small terms that can be precomputed and do not need hyperreduction.
Therefore, in this section, we directly use the form of the QoI in (\ref{eqn:numexp:airfoil:obj_disc_reduc}) and do not include the QoI constraint (because EQP is not responsible for approximating the QoI itself).

Due to the objective function implementation (\ref{eqn:numexp:airfoil:obj_disc_reduc}) and the results of the bypass study
(Section~\ref{sec:numexp:bypass}), we will only consider the $\mathrm{EQP}_{\partial_0}^{(2)}$ with $\hat{\kappa}=10^{-4}$,
$\Delta_0=0.1$, and no basis truncation. From these choices the size of the reduced basis will evolve as $n_k = 20+2k$ (the
20 initial snapshots come from one primal and adjoint solution, and 18 sensitivities at $\mubold_0$). For this problem, the
initial sensitivity snapshots are critical for both the ROM and EQP methods to converge rapidly (Figure \ref{fig:numexp:airfoil:mthd}).
Without the initial sensitivity snapshots, the build up of the reduced basis is too slow and the reduced methods are not competitive
with the HDM. However, with the initial sensitivity snapshots both methods converge to a tight tolerance of at least $10^{-10}$
much faster than using the HDM alone. Furthermore, it is clear than $\mathrm{EQP}_{\partial_0}^{(2)}$ achieves the
best performance unless tolerances below $10^{-10}$ are required.

Figure~\ref{fig:numexp:airfoil:eqp_split} shows cost breakdown of $\mathrm{EQP}_{\partial_0}^{(2)}$ as a function of trust-region iteration broken into the same sources of cost considered in Section~\ref{sec:numexp:bypass} and Figure~\ref{fig:numexp:bypass:eqp_split}. In this case, the cost of computing the EQP weights is roughly constant because
there are relatively few major iterations and more linearly dependent constraints were removed at later iterations
(Remark~\ref{rem:eqp:rm_lin_depend}). Unlike the bypass case, the cost of solving the trust-region subproblem
is noticeably larger than the cost to construct the EQP weights.

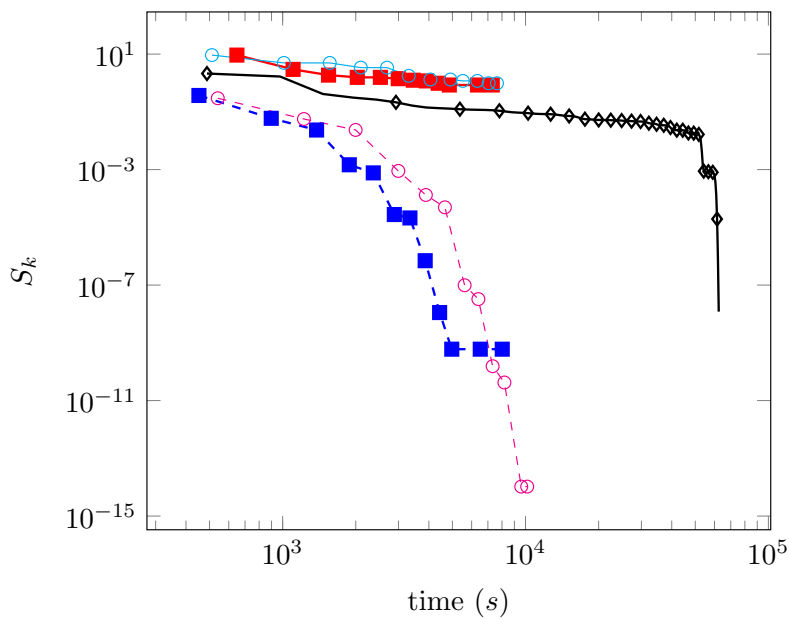
\begin{figure}[H]
    \centering
    \tikzset{every picture/.style={scale=1.1}}
    \begin{tikzpicture}
\begin{axis}[
xmin=0,
ymode=log,
xmode=log,
xlabel={time $(s)$},
ylabel={$S_k$}]
\addplot [mark=square*, mark size=2, mark options={solid}, thick, dashed, blue, mark repeat=1]
coordinates {
( 0.00000000e+00,  9.24206209e+00)
( 4.52190202e+02,  3.69282200e-01)
( 8.97551782e+02,  5.97418131e-02)
( 1.37955127e+03,  2.34921243e-02)
( 1.88142770e+03,  1.46751255e-03)
( 2.36149059e+03,  7.69482824e-04)
( 2.88349008e+03,  2.77659544e-05)
( 3.34213539e+03,  2.11613172e-05)
( 3.86305086e+03,  6.99021224e-07)
( 4.42923522e+03,  1.11849557e-08)
( 4.97008005e+03,  6.02402908e-10)
( 6.51225684e+03,  6.02402908e-10)
( 8.02036662e+03,  6.02402908e-10)};\label{line:airfoil:mthd:eqp_adj_dU0_constr_no_qoi}

\addplot [mark=square*, mark size=2, mark options={solid}, thick, solid, red, mark repeat=1]
coordinates {
( 0.00000000e+00,  9.24206209e+00)
( 6.48179382e+02,  9.24206209e+00)
( 1.10409040e+03,  2.95802620e+00)
( 1.54512554e+03,  1.87268251e+00)
( 2.02936424e+03,  1.56426548e+00)
( 2.52642008e+03,  1.56426548e+00)
( 2.99355470e+03,  1.40931499e+00)
( 3.44977385e+03,  1.24564082e+00)
( 3.90635261e+03,  1.17964605e+00)
( 4.36801518e+03,  9.77846511e-01)
( 4.85522245e+03,  8.51069058e-01)
( 6.34459943e+03,  8.51069058e-01)
( 7.29712809e+03,  8.51069058e-01)};\label{line:airfoil:mthd:eqp_adj_constr_no_qoi}

\addplot [mark=o, mark size=2, mark options={solid}, solid, cyan, mark repeat=1]
coordinates {
( 0.00000000e+00,  9.24206209e+00)
( 5.11434843e+02,  9.24206209e+00)
( 1.01186679e+03,  4.94865419e+00)
( 1.56322165e+03,  4.94865419e+00)
( 2.09829018e+03,  3.40035887e+00)
( 2.69125116e+03,  3.40035887e+00)
( 3.30513702e+03,  1.76838809e+00)
( 4.06000709e+03,  1.31712760e+00)
( 4.90772543e+03,  1.31712760e+00)
( 5.52059024e+03,  1.17929507e+00)
( 6.33437214e+03,  1.17929507e+00)
( 7.05153283e+03,  9.78559763e-01)
( 7.63194863e+03,  9.78559763e-01)};\label{line:airfoil:mthd:rom_adj}

\addplot [mark=o, mark size=2, mark options={solid}, dashed, magenta, mark repeat=1]
coordinates {
( 0.00000000e+00,  9.24206209e+00)
( 5.40617246e+02,  2.95947825e-01)
( 1.22349301e+03,  5.64515624e-02)
( 1.99781263e+03,  2.36381364e-02)
( 2.99415924e+03,  8.98551562e-04)
( 3.88837267e+03,  1.32095815e-04)
( 4.65750687e+03,  4.92426865e-05)
( 5.61517634e+03,  9.85586373e-08)
( 6.39609485e+03,  3.27147962e-08)
( 7.32227903e+03,  1.54856705e-10)
( 8.18643415e+03,  4.22556447e-11)
( 9.60293168e+03,  1.04780096e-14)
( 1.01615437e+04,  1.04780096e-14)};\label{line:airfoil:mthd:rom_adj_dU0}

\addplot [mark=diamond, mark size=2, mark options={solid}, thick, black, mark repeat=5]
coordinates {
( 0.00000000e+00,  9.24206209e+00)
( 4.87990213e+02,  2.12182403e+00)
( 9.75980426e+02,  1.67515604e+00)
( 1.46397064e+03,  4.15887935e-01)
( 1.95196085e+03,  3.11084902e-01)
( 2.43995106e+03,  2.64584223e-01)
( 2.92794128e+03,  2.13794577e-01)
( 3.41593149e+03,  1.64578524e-01)
( 3.90392170e+03,  1.41500539e-01)
( 4.39191192e+03,  1.34266235e-01)
( 4.87990213e+03,  1.29189300e-01)
( 5.36789234e+03,  1.23480856e-01)
( 5.85588256e+03,  1.19637257e-01)
( 6.34387277e+03,  1.17806353e-01)
( 6.83186298e+03,  1.16410872e-01)
( 7.31985319e+03,  1.13804009e-01)
( 7.80784341e+03,  1.09001967e-01)
( 8.29583362e+03,  1.02395133e-01)
( 8.78382383e+03,  9.70111735e-02)
( 9.27181405e+03,  9.42430467e-02)
( 9.75980426e+03,  9.24422268e-02)
( 1.02477945e+04,  8.99622458e-02)
( 1.07357847e+04,  8.69654852e-02)
( 1.12237749e+04,  8.48930781e-02)
( 1.17117651e+04,  8.40828894e-02)
( 1.21997553e+04,  8.36221398e-02)
( 1.26877455e+04,  8.27684924e-02)
( 1.31757358e+04,  8.10168302e-02)
( 1.36637260e+04,  7.81049485e-02)
( 1.41517162e+04,  7.50105331e-02)
( 1.46397064e+04,  7.30746479e-02)
( 1.51276966e+04,  7.19597687e-02)
( 1.56156868e+04,  7.05566929e-02)
( 1.61036770e+04,  6.77420489e-02)
( 1.65916672e+04,  6.31264206e-02)
( 1.70796575e+04,  5.83045123e-02)
( 1.75676477e+04,  5.58331644e-02)
( 1.80556379e+04,  5.50797004e-02)
( 1.85436281e+04,  5.46128379e-02)
( 1.90316183e+04,  5.38293090e-02)
( 1.95196085e+04,  5.29439619e-02)
( 2.00075987e+04,  5.23904324e-02)
( 2.04955889e+04,  5.22164339e-02)
( 2.09835792e+04,  5.21338631e-02)
( 2.14715694e+04,  5.19874021e-02)
( 2.19595596e+04,  5.17251695e-02)
( 2.24475498e+04,  5.13795073e-02)
( 2.29355400e+04,  5.11011057e-02)
( 2.34235302e+04,  5.09276330e-02)
( 2.39115204e+04,  5.07503040e-02)
( 2.43995106e+04,  5.04116203e-02)
( 2.48875009e+04,  4.97968035e-02)
( 2.53754911e+04,  4.89936949e-02)
( 2.58634813e+04,  4.84106442e-02)
( 2.63514715e+04,  4.81602124e-02)
( 2.68394617e+04,  4.80050545e-02)
( 2.73274519e+04,  4.77415792e-02)
( 2.78154421e+04,  4.72927085e-02)
( 2.83034324e+04,  4.67696407e-02)
( 2.87914226e+04,  4.64451698e-02)
( 2.92794128e+04,  4.63132528e-02)
( 2.97674030e+04,  4.62058683e-02)
( 3.02553932e+04,  4.59610031e-02)
( 3.07433834e+04,  4.53989806e-02)
( 3.12313736e+04,  4.42053389e-02)
( 3.17193638e+04,  4.23197613e-02)
( 3.22073541e+04,  4.05387350e-02)
( 3.26953443e+04,  3.97039317e-02)
( 3.31833345e+04,  3.94082073e-02)
( 3.36713247e+04,  3.91115077e-02)
( 3.41593149e+04,  3.84349502e-02)
( 3.46473051e+04,  3.72014357e-02)
( 3.51352953e+04,  3.56103926e-02)
( 3.56232855e+04,  3.46025404e-02)
( 3.61112758e+04,  3.43449174e-02)
( 3.65992660e+04,  3.43017089e-02)
( 3.70872562e+04,  3.42614320e-02)
( 3.75752464e+04,  3.41403820e-02)
( 3.80632366e+04,  3.38514911e-02)
( 3.85512268e+04,  3.31216199e-02)
( 3.90392170e+04,  3.15050222e-02)
( 3.95272073e+04,  2.85982781e-02)
( 4.00151975e+04,  2.53103678e-02)
( 4.05031877e+04,  2.35803169e-02)
( 4.09911779e+04,  2.32587417e-02)
( 4.14791681e+04,  2.32397320e-02)
( 4.19671583e+04,  2.32384472e-02)
( 4.24551485e+04,  2.32358573e-02)
( 4.29431387e+04,  2.32289283e-02)
( 4.34311290e+04,  2.32110149e-02)
( 4.39191192e+04,  2.31643412e-02)
( 4.44071094e+04,  2.30449546e-02)
( 4.48950996e+04,  2.27494174e-02)
( 4.53830898e+04,  2.20793239e-02)
( 4.58710800e+04,  2.08425832e-02)
( 4.63590702e+04,  1.93604208e-02)
( 4.68470604e+04,  1.85139924e-02)
( 4.73350507e+04,  1.83382591e-02)
( 4.78230409e+04,  1.83259140e-02)
( 4.83110311e+04,  1.83244850e-02)
( 4.87990213e+04,  1.83206151e-02)
( 4.92870115e+04,  1.83112481e-02)
( 4.97750017e+04,  1.82860131e-02)
( 5.02629919e+04,  1.82209019e-02)
( 5.07509822e+04,  1.80511531e-02)
( 5.12389724e+04,  1.76172498e-02)
( 5.17269626e+04,  1.65442138e-02)
( 5.22149528e+04,  1.41182956e-02)
( 5.27029430e+04,  9.67125393e-03)
( 5.31909332e+04,  4.41371937e-03)
( 5.36789234e+04,  1.47423616e-03)
( 5.41669136e+04,  8.83231377e-04)
( 5.46549039e+04,  8.46917533e-04)
( 5.51428941e+04,  8.46273416e-04)
( 5.56308843e+04,  8.46250085e-04)
( 5.61188745e+04,  8.46158989e-04)
( 5.66068647e+04,  8.45956292e-04)
( 5.70948549e+04,  8.45390015e-04)
( 5.75828451e+04,  8.43945483e-04)
( 5.80708353e+04,  8.40142798e-04)
( 5.85588256e+04,  8.30324291e-04)
( 5.90468158e+04,  8.05262898e-04)
( 5.95348060e+04,  7.44059955e-04)
( 6.00227962e+04,  6.09435034e-04)
( 6.05107864e+04,  3.77582302e-04)
( 6.09987766e+04,  1.33737650e-04)
( 6.14867668e+04,  1.93909096e-05)
( 6.19747570e+04,  9.06822625e-07)
( 6.24627473e+04,  1.20376011e-08)};\label{line:airfoil:mthd:hdm}

\end{axis}
\end{tikzpicture}
    \caption{Convergence history of the optimization methods in Table~\ref{tab:numexp:var_model} applied to airfoil problem with $\kappa=10^{-4}$ and no basis truncation. Legend:
        HDM (\ref{line:airfoil:mthd:hdm}),
        ROM (\ref{line:airfoil:mthd:rom_adj}),
        $\mathrm{ROM}_{\partial_0}$ (\ref{line:airfoil:mthd:rom_adj_dU0}),
        $\mathrm{EQP}^{(2)}$ (\ref{line:airfoil:mthd:eqp_adj_constr_no_qoi}),
        $\mathrm{EQP}_{\partial_0}^{(2)}$ (\ref{line:airfoil:mthd:eqp_adj_dU0_constr_no_qoi})
    }
    \label{fig:numexp:airfoil:mthd}
\end{figure} 

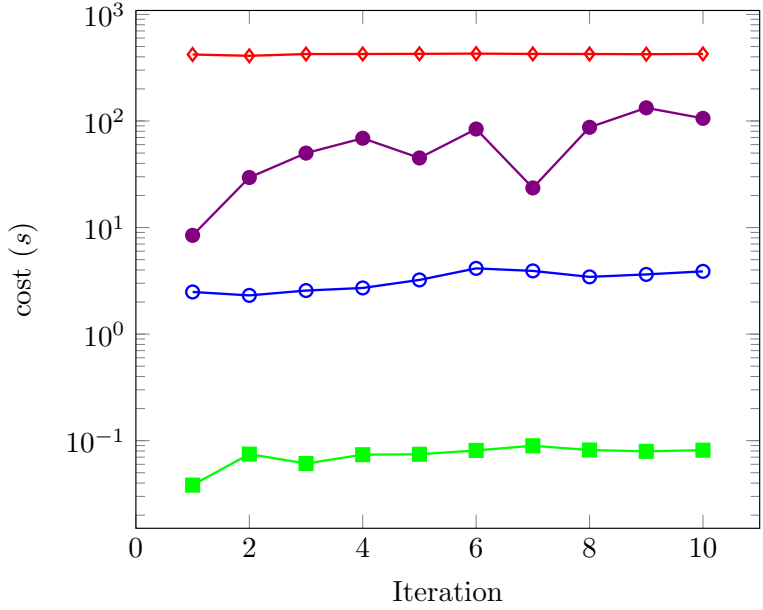
\begin{figure}[H]
\centering
\tikzset{every picture/.style={scale=1.1}}
\begin{tikzpicture}
\begin{axis}[
xmin=0,
ymode=log,
xlabel={Iteration},
ylabel={cost $(s)$}]
\addplot [mark=square*, mark size=2, mark options={solid}, thick, solid, green, mark repeat=1]
coordinates {
( 1.00000000e+00,  3.82564000e-02)
( 2.00000000e+00,  7.48130000e-02)
( 3.00000000e+00,  6.10072000e-02)
( 4.00000000e+00,  7.38680000e-02)
( 5.00000000e+00,  7.45318000e-02)
( 6.00000000e+00,  8.09778000e-02)
( 7.00000000e+00,  8.95530000e-02)
( 8.00000000e+00,  8.17070000e-02)
( 9.00000000e+00,  7.94215000e-02)
( 1.00000000e+01,  8.14176000e-02)};\label{line:airfoil:eqp:basis}

\addplot [mark=o, mark size=2, mark options={solid}, thick, solid, blue, mark repeat=1]
coordinates {
( 1.00000000e+00,  2.48558850e+00)
( 2.00000000e+00,  2.30776260e+00)
( 3.00000000e+00,  2.56228520e+00)
( 4.00000000e+00,  2.71057690e+00)
( 5.00000000e+00,  3.22422360e+00)
( 6.00000000e+00,  4.13790720e+00)
( 7.00000000e+00,  3.91901600e+00)
( 8.00000000e+00,  3.44404850e+00)
( 9.00000000e+00,  3.63225350e+00)
( 1.00000000e+01,  3.88026730e+00)};\label{line:airfoil:eqp:eqp}

\addplot [mark=diamond, mark size=2, mark options={solid}, thick, solid, red, mark repeat=1]
coordinates {
( 1.00000000e+00,  4.20478549e+02)
( 2.00000000e+00,  4.08948073e+02)
( 3.00000000e+00,  4.24801173e+02)
( 4.00000000e+00,  4.25192003e+02)
( 5.00000000e+00,  4.26535594e+02)
( 6.00000000e+00,  4.28696529e+02)
( 7.00000000e+00,  4.25654561e+02)
( 8.00000000e+00,  4.25061208e+02)
( 9.00000000e+00,  4.23281390e+02)
( 1.00000000e+01,  4.25786661e+02)};\label{line:airfoil:eqp:assess}

\addplot [mark=*, mark size=2, mark options={solid}, thick, solid, violet, mark repeat=1]
coordinates {
( 1.00000000e+00,  8.44474530e+00)
( 2.00000000e+00,  2.94757296e+01)
( 3.00000000e+00,  4.99595533e+01)
( 4.00000000e+00,  6.88638806e+01)
( 5.00000000e+00,  4.49934293e+01)
( 6.00000000e+00,  8.40687507e+01)
( 7.00000000e+00,  2.35428686e+01)
( 8.00000000e+00,  8.72451405e+01)
( 9.00000000e+00,  1.32883061e+02)
( 1.00000000e+01,  1.05872132e+02)};\label{line:airfoil:eqp:subprob}

\end{axis}
\end{tikzpicture}
\caption{Cost breakdown at each major iteration for $\mathrm{EQP}_{\partial_0}^{(2)}$ applied to the airfoil problem. Legend:     
Build $\Phibold_k$ (i) (\ref{line:airfoil:eqp:basis}),
compute the EQP weights (ii) (\ref{line:airfoil:eqp:eqp}),
solve the TR subproblem (iii) (\ref{line:airfoil:eqp:subprob}), and 
compute snapshots by solving the HDM (iv) (\ref{line:airfoil:eqp:assess}).
}
\label{fig:numexp:airfoil:eqp_split}
\end{figure}

Finally, we compare three methods---HDM, $\mathrm{ROM}_{\partial_0}$, $\mathrm{EQP}_{\partial_0}^{(2)}$---in terms of the
computational cost required to achieve a given cutoff tolerance $S_k$ (Table \ref{tab:numexp:airfoil:hist_cutoff}). Because the HDM
uses pseudo-transient continuation to compute the CFD solution at each iteration, whereas the $\mathrm{ROM}_{\partial_0}$ and
$\mathrm{EQP}_{\partial_0}^{(2)}$ methods directly use Newton's method with initial guesses coming from the snapshots
(Remark~\ref{rem:ptc_vs_newt}), both methods provide speedup relative to the HDM method over an order of magnitude.
For this problem,
the speedup of the $\mathrm{ROM}_{\partial_0}$ method is between $9.7-14$ and the $\mathrm{EQP}_{\partial_0}^{(2)}$
method is $12.5-18.8$. Similar to the bypass case, the best speedup comes from the looser tolerances.
Figure~\ref{fig:numexp:airfoil:samp_msh} shows several samples meshes produced throughout the
$\mathrm{EQP}_{\partial_0}^{(2)}$ algorithm; as expected, the sampled elements are mainly distributed around the airfoil.

\begin{table}[H]
\centering
\begin{tabular}{c|c|c c c c c} 
\hline
Method & $S_k$ & \# HDM & \# ROM &\# EQP & cost(s) & Speed-up\\ [0.5ex] 
\hline
\input{_dat/airfoil/airfoil_cmpr.dat}
\end{tabular}
\caption{Comparison of the performance based on cutoff tolerance}
\label{tab:numexp:airfoil:hist_cutoff}
\end{table}

\begin{figure}[H]
\centering
\begin{minipage}[t]{0.329\textwidth}
\centering
\includegraphics[width=\textwidth]{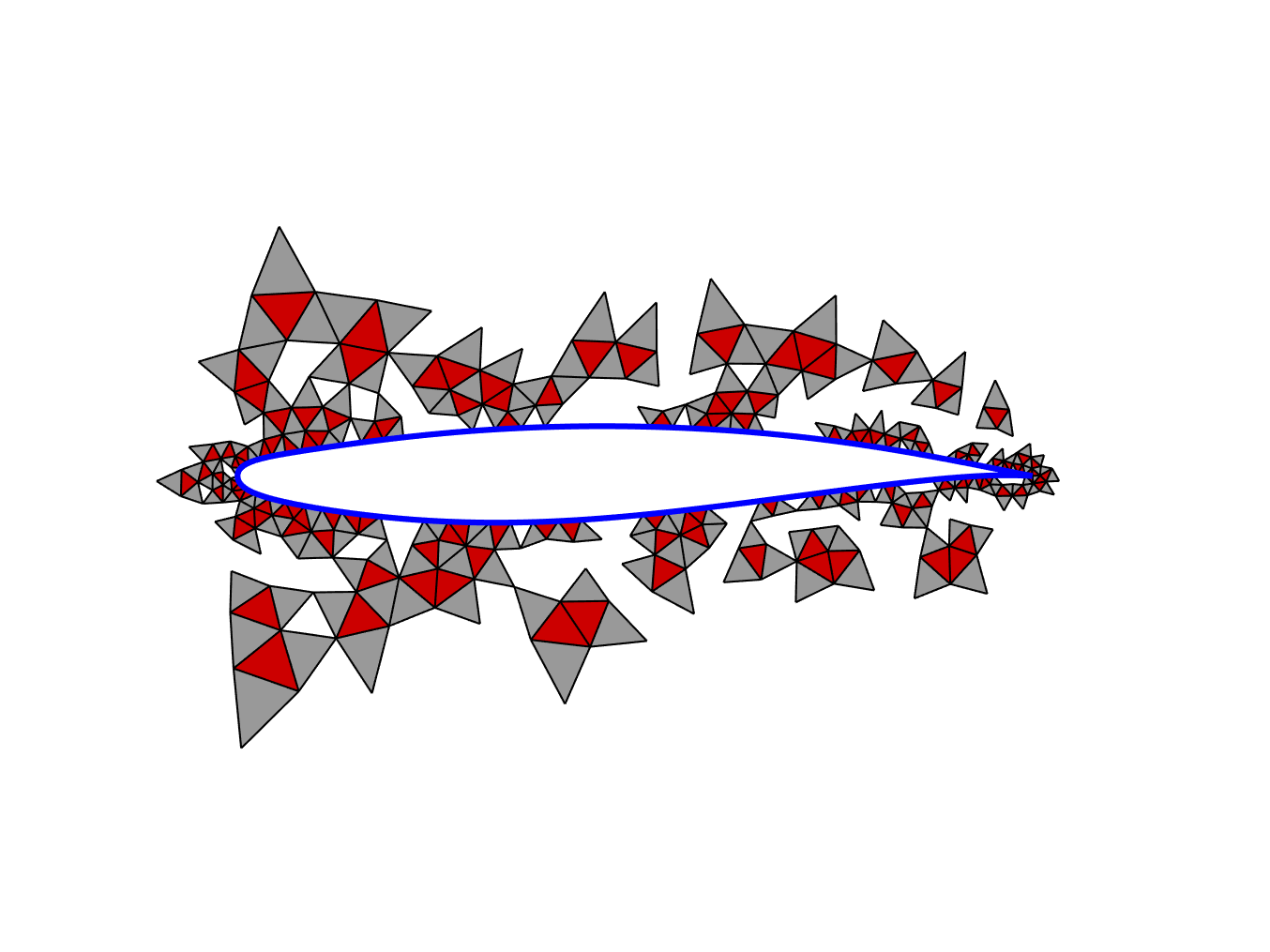}
\end{minipage}
\hfill
\begin{minipage}[t]{0.329\textwidth}
\centering
\includegraphics[width=\textwidth]{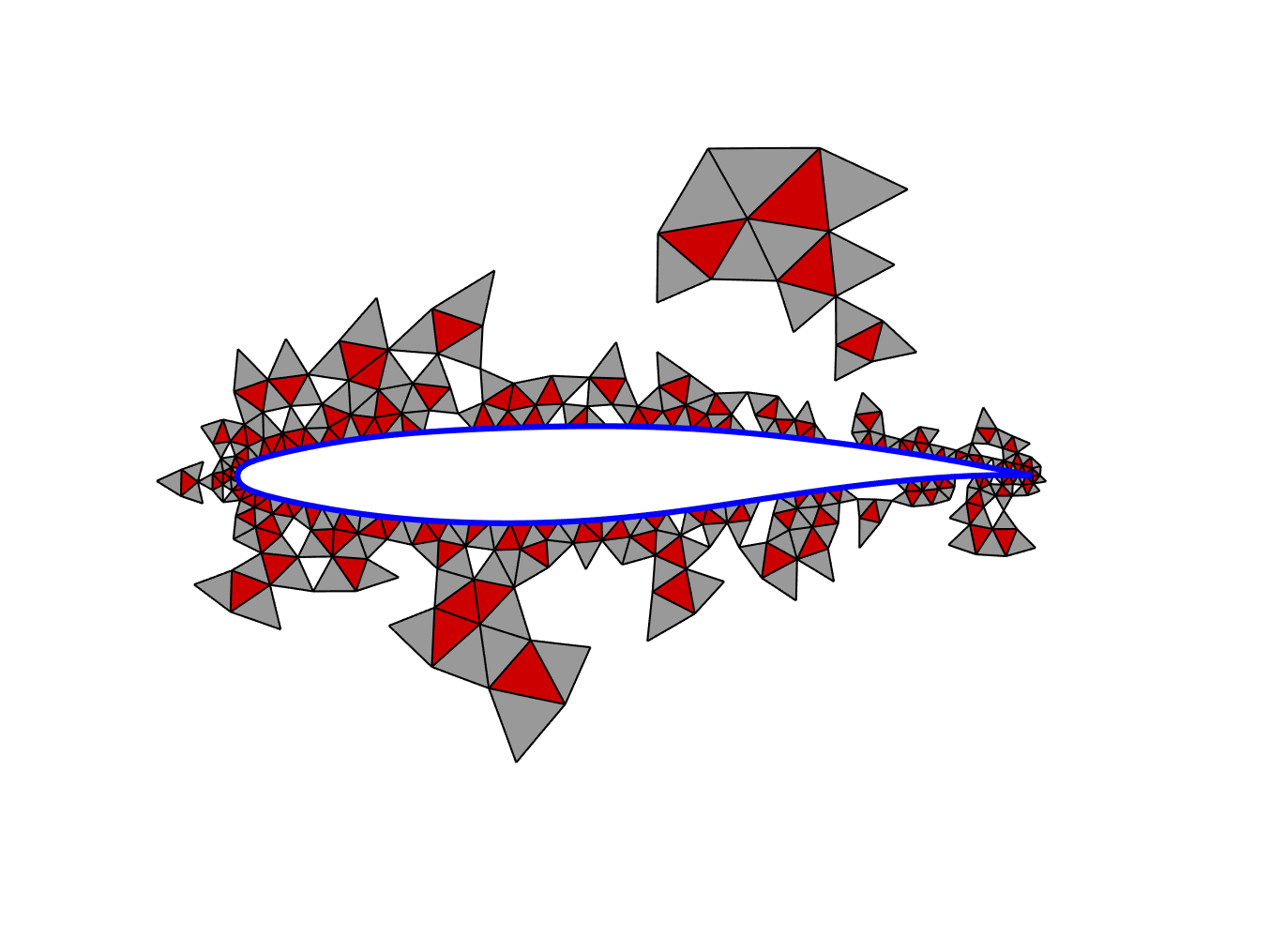}
\end{minipage}
\hfill
\begin{minipage}[t]{0.329\textwidth}
\centering
\includegraphics[width=\textwidth]{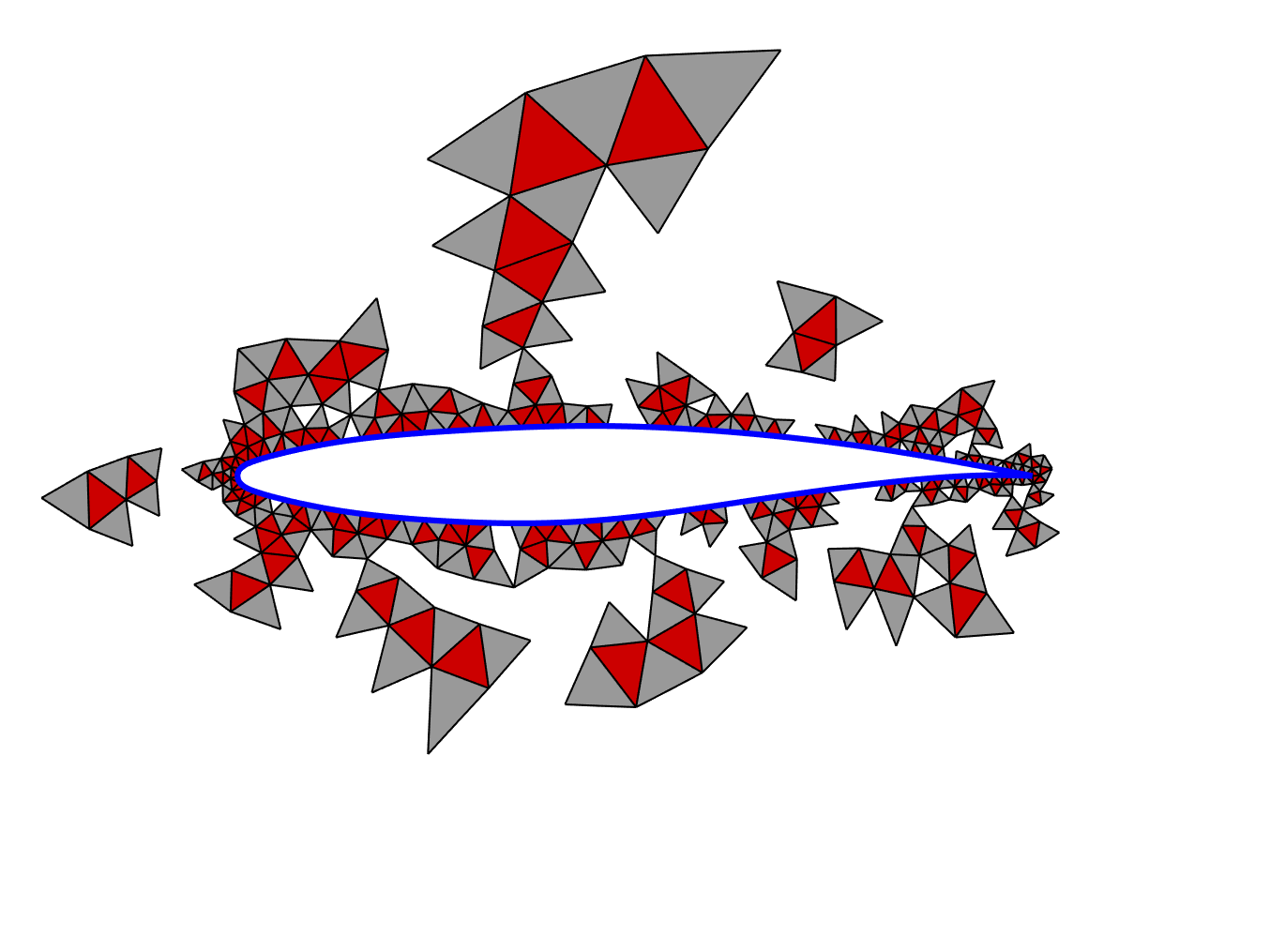}
\end{minipage}
\caption{Domain shape and sample mesh at the beginning of major iterations $k=1,2,3$ (\textit{left-to-right}) of the $\mathrm{EQP}_{\partial_0}^{(2)}$ method for the airfoil problem. The red elements correspond to nonzero weights in $\rhobold_k$, although the
gray elements are also included in the sample mesh since the solution on these elements (at least on their faces that neighbor
the red elements) are required to compute the DG residual contribution in the red elements.}
\label{fig:numexp:airfoil:samp_msh}
\end{figure}

\section{Conclusion}
\label{sec:concl}
In this work, we introduce a numerical method to efficiently solve optimization problems governed by large-scale nonlinear systems
using projection-based reduced-order models accelerated with hyperreduction (empirical quadrature) and globalized with a
trust-region method based on inexact gradient evaluations and asymptotic error bounds \cite{kouri_trust-region_2013}. The proposed method is globally
convergent because hyperreduced models are constructed at each trust-region center precisely to satisfy the conditions
for global convergence. In addition to the primary contribution of the EQP/TR method, this work also contributes:
(i) an empirical quadrature procedure with additional constraints tailored to the optimization setting,
(ii) an approach to accelerate physics-based mesh motion (linear elasticity, springs system, etc) using linear model reduction, and
(iii) global convergence theory for the proposed EQP/TR method.

Two fluid shape optimization problems are employed to verify global convergence of the method and
demonstrate the efficiency of the method; speedups over 18 are attained relative to standard optimization approaches
(even when accounting for all sources of computational cost, i.e., HDM evaluations to gather snapshots and assess trust-region
steps, reduced basis construction, linear program solves to construct EQP weights, trust-region subproblems solves, etc).
Several studies were performed to understand the sensitivity of the method relative to algorithmic parameters, e.g.,
the gradient tolerance, the basis truncation size, and the EQP constraints to include. We found mild sensitivity
with respect to the gradient tolerance, although a moderate value of $\hat\kappa/\kappa=10^{-4}$ proved effective
throughout. Furthermore, there seemed to be little benefit to truncating the snapshot matrix and including
all optimization-based EQP constraints was most effective. Currently, the dominant cost of the proposed
method comes from the HDM evaluations. This suggests further
investigation into hyperreduced models with better predictive capabilities could reduce the frequency
in which the model is reconstructed and further improve the computational efficiency of the method.
Other interesting research directions include the extension to unsteady problems, extension to quadrature-based
(instead of element-based) EQP \cite{du_adaptive_2021,dua2022efficient} for integration with higher order finite element
discretizations, and using the methodology to solve relevant optimization problems.

\section*{Acknowledgments}
This material is based upon work supported by the Air Force Office of
Scientific Research (AFOSR) under award numbers FA9550-20-1-0236
and FA9550-22-1-0004. The content of this publication does not necessarily
reflect the position or policy of any of these supporters, and no official endorsement
should be inferred.

\appendix
\section{Regularity and boundedness assumptions}
\label{sec:appendix_A}
We begin by stating a series of regularity and boundedness assumptions on both the HDM and hyperreduced model. These
assumptions were introduced in previous work \cite{zahr_efficient_2019} and will be used to derive residual-based error estimates.
\begin{assume}\label{assum:hdm}
Consider any open, bounded subset $\Ucal\subset\Rbb^{N_\ubm}$. We assume the HDM
 residual function in (\ref{eqn:hdm:res_eqn}) and quantity of interest in (\ref{eqn:hdm:qoi_elem}) satisfy the following:
\begin{enumerate}[label=\textbf{(AH\arabic*)}]
 \item\label{AH1} $\rbm$ is continuously differentiable with respect to both arguments on the domain $\Ucal\times\Dcal$.
 \item\label{AH2} $j$ is continuously differentiable with respect to both arguments on the domain $\Ucal\times\Dcal$.
 \item\label{AH3} $j$ is Lipschitz continuous with respect to its first argument on the domain $\Ucal\times\Dcal$.
 \item\label{AH4} The Jacobian matrix
 \begin{equation}
  \pder{\rbm}{\ubm} : \Rbb^{N_\ubm}\times\Rbb^{N_\mubold} \rightarrow \Rbb^{N_\ubm\times N_\ubm}, \qquad
  \pder{\rbm}{\ubm} : (\ubm,\mubold) \mapsto \pder{\rbm}{\ubm}(\ubm,\mubold)
 \end{equation}
 is Lipschitz continuous with respect to its first argument on the domain $\Ucal\times\Dcal$.
 \item\label{AH5} The state derivative
 \begin{equation}
  \pder{f}{\ubm} : \Rbb^{N_\ubm}\times\Rbb^{N_\mubold} \rightarrow \Rbb^{1\times N_\ubm}, \qquad
  \pder{f}{\ubm} : (\ubm,\mubold) \mapsto \pder{f}{\ubm}(\ubm,\mubold)
 \end{equation}
 is Lipschitz continuous with respect to its first argument on the domain $\Ucal\times \Dcal$.
 \item\label{AH6} The parameter Jacobian matrix
 \begin{equation}
  \pder{\rbm}{\mubold} : \Rbb^{N_\ubm}\times\Rbb^{N_\mubold} \rightarrow \Rbb^{N_\ubm\times N_\mubold}, \qquad
  \pder{\rbm}{\mubold} : (\ubm,\mubold) \mapsto \pder{\rbm}{\mubold}(\ubm,\mubold)
 \end{equation}
 is Lipschitz continuous with respect to its first argument on the domain $\Ucal\times\Dcal$.
 \item\label{AH7} The parameter derivative
 \begin{equation}
  \pder{f}{\mubold} : \Rbb^{N_\ubm}\times\Rbb^{N_\mubold} \rightarrow \Rbb^{1\times N_\mubold}, \qquad
  \pder{f}{\mubold} : (\ubm,\mubold) \mapsto \pder{f}{\mubold}(\ubm,\mubold)
 \end{equation}
 is Lipschitz continuous with respect to its first argument on the domain $\Ucal\times \Dcal$.
 \item\label{AH8} The matrix function
 \begin{equation}
  \Dbm : \Rbb^{N_\ubm}\times\Rbb^{N_\ubm}\times\Rbb^{N_\mubold} \rightarrow \Rbb^{N_\ubm\times N_\ubm}, \qquad
  \Dbm : (\ubm_1,\ubm_2,\zbm) \mapsto \int_0^1 \pder{\rbm}{\ubm}(\ubm_2 + t(\ubm_1-\ubm_2),\mubold) \, dt
 \end{equation}
 is invertible with bounded inverse on the domain $\Ucal\times\Ucal\times\Dcal$.
 \item\label{AH9} For any $\mubold\in\Dcal$, there is a unique solution $\ubm^\star$ satisfying
 $\rbm(\ubm^\star,\mubold) = \zerobold$ and the set of solutions
 $\left\{ \ubm \in \Rbb^{N_\ubm} \suchthat \rbm(\ubm,\mubold) = \zerobold, \forall \mubold\in\Dcal\right\}$
 is a bounded set.
\end{enumerate}
\end{assume}

\begin{assume}\label{assum:eqp}
Consider any open, bounded subset $\Ycal\subset\Rbb^n$. For any full-rank
reduced basis $\Phibold\in\Rbb^{N_\ubm\times n}$, we assume the hyperreduced
residual function in (\ref{eqn:eqp:res_elem}) and quantity of interest in (\ref{eqn:eqp:qoi_elem}) satisfy the following: for any $\rhobold\in\Rcal$,
\begin{enumerate}[label=\textbf{(AR\arabic*)}]
 \item\label{AR1}$\tilde\rbm_\Phibold(\,\cdot\,,\,\cdot\,;\rhobold)$ is continuously differentiable with respect to both
   arguments on the domain $\Ycal\times\Dcal$.
 \item\label{AR2}$\tilde{j}_\Phibold(\,\cdot\,,\,\cdot\,;\rhobold)$ is continuously differentiable with respect to both arguments on the domain $\Ycal\times\Dcal$.
 \item\label{AR3} $\tilde{j}_\Phibold(\,\cdot\,,\,\cdot\,;\rhobold)$ is Lipschitz continuous with respect to its first argument on the domain $\Ycal\times\Dcal$.
 \item\label{AR4} The Jacobian matrix
 \begin{equation}
  \pder{\tilde\rbm}{\tilde\ybm}(\,\cdot\,,\,\cdot\,;\rhobold) : \Rbb^{n}\times\Rbb^{N_\mubold} \rightarrow \Rbb^{n\times n}, \qquad
  \pder{\tilde\rbm}{\tilde\ybm} : (\tilde\ybm,\mubold;\rhobold) \mapsto \pder{\tilde\rbm}{\tilde\ybm}(\tilde\ybm,\mubold;\rhobold)
 \end{equation}
 is Lipschitz continuous with respect to its first argument on the domain $\Ycal\times\Dcal$.
 \item\label{AR5} The state derivative
 \begin{equation}
  \pder{\tilde{j}_\Phibold}{\tilde\ybm}(\,\cdot\,,\,\cdot\,;\rhobold) : \Rbb^{n}\times\Rbb^{N_\mubold} \rightarrow \Rbb^{1\times n}, \qquad
  \pder{\tilde{j}_\Phibold}{\tilde\ybm}(\,\cdot\,,\,\cdot\,;\rhobold) : (\tilde\ybm,\mubold;\rhobold) \mapsto \pder{\tilde{j}_\Phibold}{\tilde\ybm}(\tilde\ybm,\mubold;\rhobold)
 \end{equation}
 is Lipschitz continuous with respect to its first argument on the domain $\Ycal\times \Dcal$.
 \item\label{AR7} The parameter Jacobian matrix
 \begin{equation}
  \pder{\tilde\rbm}{\mubold} : \Rbb^{n}\times\Rbb^{N_\mubold} \rightarrow \Rbb^{n\times N_\mubold}, \qquad
  \pder{\tilde\rbm}{\mubold} : (\tilde\ybm,\mubold) \mapsto \pder{\tilde\rbm}{\mubold}(\tilde\ybm,\mubold)
 \end{equation}
 is Lipschitz continuous with respect to its first argument on the domain $\Ycal\times\Dcal$.
 \item\label{AR8}The parameter derivative
 \begin{equation}
  \pder{\tilde{j}_\Phibold}{\mubold}(\,\cdot\,,\,\cdot\,;\rhobold) : \Rbb^{n}\times\Rbb^{N_\mubold} \rightarrow \Rbb^{1\times N_\mubold}, \qquad
  \pder{\tilde{j}_\Phibold}{\mubold} : (\tilde\ybm,\mubold;\rhobold) \mapsto \pder{\tilde{j}_\Phibold}{\mubold}(\tilde\ybm,\mubold;\rhobold)
 \end{equation}
 is Lipschitz continuous with respect to its first argument on the domain $\Ycal\times \Dcal$.
 \item\label{AR9} The matrix function
 \begin{equation}
  \tilde\Dbm_\Phibold(\,\cdot\,,\,\cdot\,,\,\cdot\,;\rhobold) : \Rbb^n\times\Rbb^n\times\Rbb^{N_\mubold} \rightarrow \Rbb^{n \times n}, \qquad
  \tilde\Dbm_\Phibold : (\tilde\ybm_1,\tilde\ybm_2,\zbm;\rhobold) \mapsto \int_0^1 \pder{\tilde\rbm_\Phibold}{\tilde\ybm}(\tilde\ybm_2 + t(\tilde\ybm_1-\tilde\ybm_2),\mubold;\rhobold) \, dt
 \end{equation}
 is invertible with bounded inverse on the domain $\Ycal\times\Ycal\times\Dcal$.
 \item For any $\mubold\in\Dcal$, there is a unique solution $\tilde\ybm^\star$ satisfying
 $\tilde\rbm_\Phibold(\tilde\ybm^\star,\mubold;\rhobold) = \zerobold$, and the set of solutions
 $ \left\{ \ybm \in \Rbb^n \suchthat \tilde\rbm_\Phibold(\ybm,\mubold;\rhobold) = \zerobold, \forall \mubold\in\Dcal\right\}$
 is a bounded set. 
\end{enumerate}
\end{assume}

\begin{remark}
\ref{AR1}-\ref{AR8} follow directly from Assumption~\ref{assum:hdm} in the case where $\rhobold=\onebold$ because $\hat\rbm_\Phibold(\hat\ybm,\mubold) = \tilde\rbm_\Phibold(\hat\ybm,\mubold;\onebold)$
and the relationship between $\rbm$ and $\hat\rbm$ in (\ref{eqn:rom:res}).
\end{remark}


From these assumptions, the following residual-based estimates on the primal state, adjoint state, and output hold for each model.
These lemmas will be used to establish residual-based error estimates between the HDM and hyperreduced outputs
(Theorem~\ref{the:qoi_grad_errbnd} and its corollaries).
\begin{lemma}\label{lem:hdm_diff}
Under Assumptions~\ref{assum:hdm}-\ref{assum:eqp}, for any $\ubm\in\Ucal$, $\lambdabold\in\Lambda$, and $\mubold\in\Dcal$, there exist constants
$\kappa,\tau,\omega>0$ such that
\begin{equation}
 \norm{\ubm^\star(\mubold)-\ubm} \leq \kappa \norm{\rbm(\ubm,\mubold)}, \qquad
 \norm{\lambdabold^\star(\mubold)-\lambdabold} \leq \tau \norm{\rbm(\ubm,\mubold)} + \omega \norm{\rbm^\lambda(\lambdabold,\ubm,\mubold)},
\end{equation}
where $\Ucal\subset\Rbb^{N_\ubm}$, $\Lambda\subset\Rbb^{N_\ubm}$, and $\Dcal\subset\Rbb^{N_\mubold}$ are bounded
subsets. Furthermore, there exists constants $\kappa',\tau',\omega'>0$ such that
\begin{equation}
\begin{aligned}
 |j(\ubm^\star(\mubold),\mubold) - j(\ubm,\mubold)| &\leq \kappa' \norm{\rbm(\ubm,\mubold)}, \\
 \norm{\gbm^\lambdabold(\lambdabold^\star(\mubold),\ubm^\star(\mubold),\mubold)-\gbm^\lambdabold(\lambdabold,\ubm,\mubold)} &\leq \tau' \norm{\rbm(\ubm,\mubold)} + \omega' \norm{\rbm^\lambda(\lambdabold,\ubm,\mubold)}.
\end{aligned}
\end{equation}
\begin{proof}
Proposition A.1 and A.2 of \cite{zahr_efficient_2019}.
\end{proof}
\end{lemma}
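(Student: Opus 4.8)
The plan is to derive all four estimates from a single linearization identity combined with the regularity and boundedness hypotheses of Assumptions~\ref{assum:hdm}--\ref{assum:eqp}, and to propagate the primal bound into each of the remaining three. First I would establish the primal estimate. Applying the fundamental theorem of calculus along the segment joining $\ubm$ and $\ubm^\star(\mubold)$ and using $\rbm(\ubm^\star(\mubold),\mubold)=\zerobold$ gives the exact identity
\begin{equation}
 \rbm(\ubm,\mubold) = \rbm(\ubm,\mubold) - \rbm(\ubm^\star(\mubold),\mubold) = \Dbm(\ubm,\ubm^\star(\mubold),\mubold)\paren{\ubm - \ubm^\star(\mubold)},
\end{equation}
where $\Dbm$ is the averaged Jacobian of~\ref{AH8}. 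Since $\Dbm$ is invertible with inverse bounded uniformly on $\Ucal\times\Ucal\times\Dcal$, inverting and taking norms yields $\norm{\ubm^\star(\mubold)-\ubm}\leq\kappa\norm{\rbm(\ubm,\mubold)}$ with $\kappa$ the bound on $\norm{\Dbm^{-1}}$. The output estimate then follows immediately from the Lipschitz continuity of $j$ in its first argument~(\ref{AH3}): $\abs{j(\ubm^\star(\mubold),\mubold)-j(\ubm,\mubold)}\leq L_j\norm{\ubm^\star(\mubold)-\ubm}\leq\kappa'\norm{\rbm(\ubm,\mubold)}$ with $\kappa'=L_j\kappa$.

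The adjoint estimate is the crux of the argument. Using the defining relation $\pder{\rbm}{\ubm}(\ubm^\star,\mubold)^T\lambdabold^\star=\pder{j}{\ubm}(\ubm^\star,\mubold)^T$ together with the definition of the adjoint residual, I would form $\pder{\rbm}{\ubm}(\ubm^\star,\mubold)^T(\lambdabold^\star-\lambdabold)$ and insert intermediate terms evaluated at $\ubm$ to split it into three pieces: (i) the output state-derivative difference $\pder{j}{\ubm}(\ubm^\star,\mubold)^T-\pder{j}{\ubm}(\ubm,\mubold)^T$, (ii) exactly $-\rlam(\lambdabold,\ubm,\mubold)$, and (iii) the Jacobian mismatch $\bracket{\pder{\rbm}{\ubm}(\ubm,\mubold)^T-\pder{\rbm}{\ubm}(\ubm^\star,\mubold)^T}\lambdabold$. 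Pieces (i) and (iii) are each controlled by $\norm{\ubm^\star-\ubm}$ through the Lipschitz continuity of the output state derivative~(\ref{AH5}) and of $\pder{\rbm}{\ubm}$~(\ref{AH4}), the latter also invoking boundedness of $\lambdabold$ on $\Lambda$; by the primal bound both are $O(\norm{\rbm(\ubm,\mubold)})$. Since $\pder{\rbm}{\ubm}(\ubm^\star,\mubold)=\Dbm(\ubm^\star,\ubm^\star,\mubold)$ is invertible with bounded inverse, I would left-multiply by its transpose-inverse and collect terms to obtain $\norm{\lambdabold^\star-\lambdabold}\leq\tau\norm{\rbm(\ubm,\mubold)}+\omega\norm{\rlam(\lambdabold,\ubm,\mubold)}$.

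Finally, the gradient-reconstruction estimate follows by expanding $\glam(\lambdabold^\star,\ubm^\star,\mubold)-\glam(\lambdabold,\ubm,\mubold)$ using $\glam(\lambdabold,\ubm,\mubold)=\pder{j}{\mubold}(\ubm,\mubold)-\lambdabold^T\pder{\rbm}{\mubold}(\ubm,\mubold)$. I would group the difference into a parameter state-derivative term $\pder{j}{\mubold}(\ubm^\star,\mubold)-\pder{j}{\mubold}(\ubm,\mubold)$, an adjoint term $(\lambdabold^\star-\lambdabold)^T\pder{\rbm}{\mubold}(\ubm^\star,\mubold)$, and a parameter-Jacobian mismatch $\lambdabold^T\bracket{\pder{\rbm}{\mubold}(\ubm^\star,\mubold)-\pder{\rbm}{\mubold}(\ubm,\mubold)}$. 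The first and third are bounded by $\norm{\ubm^\star-\ubm}$ via Lipschitz continuity of $\pder{j}{\mubold}$~(\ref{AH7}) and $\pder{\rbm}{\mubold}$~(\ref{AH6}), using boundedness of $\lambdabold$ and of $\pder{\rbm}{\mubold}$ on the compact sets, hence $O(\norm{\rbm})$; the middle term is bounded via the adjoint estimate, which is what introduces the $\norm{\rlam}$ contribution. Combining yields $\tau'\norm{\rbm(\ubm,\mubold)}+\omega'\norm{\rlam(\lambdabold,\ubm,\mubold)}$. The hard part will be the adjoint bound: the bookkeeping of which Lipschitz constant controls each residual term, and especially ensuring every constant is uniform over the bounded sets $\Ucal$, $\Lambda$, $\Dcal$ rather than depending on the particular $(\ubm,\lambdabold,\mubold)$, so that the resulting $\tau,\omega$ (and hence $\tau',\omega'$) are genuinely independent of the evaluation point.
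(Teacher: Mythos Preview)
Your proposal is correct and follows the standard route: the mean-value linearization via the averaged Jacobian $\Dbm$ of~\ref{AH8} for the primal bound, Lipschitz propagation for the output, a three-term splitting of $\pder{\rbm}{\ubm}(\ubm^\star,\mubold)^T(\lambdabold^\star-\lambdabold)$ for the adjoint, and then a similar decomposition of $\glam$ feeding in the primal and adjoint bounds. The paper itself does not give a self-contained proof here; it simply cites Propositions~A.1 and~A.2 of \cite{zahr_efficient_2019}, whose arguments are precisely of the type you outline, so your sketch is essentially a reconstruction of the referenced proof rather than a different approach.
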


\begin{lemma}\label{lem:eqp_diff}
Under Assumptions~\ref{assum:hdm}-\ref{assum:eqp}, for any $\tilde\ybm\in\Ycal$, $\tilde\zbm\in\Zcal$, $\rhobold\in\Rcal$, and $\mubold\in\Dcal$, there
exist constants $\kappa,\tau,\omega>0$ such that
\begin{equation}
 \norm{\tilde\ybm_\Phibold^\star(\mubold;\rhobold)-\tilde\ybm} \leq \kappa \norm{\tilde\rbm_\Phibold(\tilde\ybm,\mubold;\rhobold)}, \qquad
 \norm{\tilde\lambdabold_\Phibold^\star(\mubold;\rhobold)-\tilde\zbm} \leq \tau \norm{\tilde\rbm_\Phibold(\tilde\ybm,\mubold;\rhobold)} + \omega \norm{\tilde\rbm_\Phibold^\lambda(\tilde\zbm,\tilde\ybm,\mubold;\rhobold)},
\end{equation}
where $\Ycal\subset\Rbb^n$, $\Zcal\subset\Rbb^n$, and $\Dcal\subset\Rbb^{N_\mubold}$ are bounded
subsets. 
Furthermore, there exists constants $\kappa',\tau',\omega'>0$ such that
\begin{equation}
\begin{aligned}
 |\tilde{j}_\Phibold(\tilde\ybm_\Phibold^\star(\mubold;\rhobold),\mubold;\rhobold) - \tilde{j}_\Phibold(\tilde\ybm,\mubold;\rhobold)| &\leq \kappa' \norm{\tilde\rbm_\Phibold(\tilde\ybm,\mubold;\rhobold)}, \\
 \norm{\tilde\gbm_\Phibold^\lambdabold(\tilde\lambdabold_\Phibold^\star(\mubold;\rhobold),\tilde\ybm_\Phibold^\star(\mubold;\rhobold),\mubold;\rhobold)-\tilde\gbm_\Phibold^\lambdabold(\tilde\zbm,\tilde\ybm,\mubold;\rhobold)} &\leq \tau' \norm{\tilde\rbm_\Phibold(\tilde\ybm,\mubold;\rhobold)} + \omega' \norm{\tilde\rbm_\Phibold^\lambda(\tilde\zbm,\tilde\ybm,\mubold;\rhobold)}.
\end{aligned}
\end{equation}
\begin{proof}
Proposition A.1 and A.2 of \cite{zahr_efficient_2019}.
\end{proof}
\end{lemma}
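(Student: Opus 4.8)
The plan is to exploit the fact that the hyperreduced model satisfies the \emph{same} abstract regularity and boundedness hypotheses as the HDM: Assumption~\ref{assum:eqp} \ref{AR1}--\ref{AR9} are the verbatim analogues of the HDM conditions used to establish Lemma~\ref{lem:hdm_diff}. Consequently, for each fixed $\rhobold\in\Rcal$ the triple $(\tilde\rbm_\Phibold(\,\cdot\,;\rhobold),\tilde{j}_\Phibold(\,\cdot\,;\rhobold),\trlam_\Phibold(\,\cdot\,;\rhobold))$ is simply another instance of the generic residual/QoI/adjoint-residual setting to which Propositions A.1 and A.2 of \cite{zahr_efficient_2019} apply. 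I would therefore reproduce that argument line-by-line with every HDM object replaced by its hyperreduced counterpart, while tracking that the resulting constants can be taken uniform over the bounded sets $\Ycal,\Zcal,\Dcal$ and over $\rhobold\in\Rcal$. Throughout I abbreviate $\tilde\ybm_\Phibold^\star=\tilde\ybm_\Phibold^\star(\mubold;\rhobold)$ and $\tilde\lambdabold_\Phibold^\star=\tilde\lambdabold_\Phibold^\star(\mubold;\rhobold)$.

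For the primal estimate, first I would invoke the fundamental theorem of calculus to write
\begin{equation}
 \tilde\rbm_\Phibold(\tilde\ybm,\mubold;\rhobold) - \tilde\rbm_\Phibold(\tilde\ybm_\Phibold^\star,\mubold;\rhobold) = \tilde\Dbm_\Phibold(\tilde\ybm,\tilde\ybm_\Phibold^\star,\mubold;\rhobold)(\tilde\ybm - \tilde\ybm_\Phibold^\star),
\end{equation}
where $\tilde\Dbm_\Phibold$ is the averaged Jacobian of \ref{AR9}. Since $\tilde\rbm_\Phibold(\tilde\ybm_\Phibold^\star,\mubold;\rhobold)=\zerobold$ by definition and $\tilde\Dbm_\Phibold$ is invertible with uniformly bounded inverse, rearranging and taking norms yields $\norm{\tilde\ybm_\Phibold^\star-\tilde\ybm}\leq\kappa\norm{\tilde\rbm_\Phibold(\tilde\ybm,\mubold;\rhobold)}$ with $\kappa$ the uniform bound on $\|\tilde\Dbm_\Phibold^{-1}\|$. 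The output estimate is then immediate: Lipschitz continuity of $\tilde{j}_\Phibold$ in its first argument \ref{AR3} gives $|\tilde{j}_\Phibold(\tilde\ybm_\Phibold^\star,\mubold;\rhobold)-\tilde{j}_\Phibold(\tilde\ybm,\mubold;\rhobold)|\leq L\norm{\tilde\ybm_\Phibold^\star-\tilde\ybm}\leq\kappa' \norm{\tilde\rbm_\Phibold(\tilde\ybm,\mubold;\rhobold)}$.

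For the adjoint estimate, I would start from $\trlam_\Phibold(\tilde\lambdabold_\Phibold^\star,\tilde\ybm_\Phibold^\star,\mubold;\rhobold)=\zerobold$ and the definition of $\trlam_\Phibold$, then add and subtract the Jacobian and state-derivative evaluated at $\tilde\ybm$ so that $\pder{\tilde\rbm_\Phibold}{\tilde\ybm}(\tilde\ybm_\Phibold^\star,\mubold;\rhobold)^T(\tilde\lambdabold_\Phibold^\star-\tilde\zbm)$ is expressed as $-\trlam_\Phibold(\tilde\zbm,\tilde\ybm,\mubold;\rhobold)$ plus differences of the form $[\pder{\tilde{j}_\Phibold}{\tilde\ybm}(\tilde\ybm_\Phibold^\star,\mubold;\rhobold)-\pder{\tilde{j}_\Phibold}{\tilde\ybm}(\tilde\ybm,\mubold;\rhobold)]^T$ and $[\pder{\tilde\rbm_\Phibold}{\tilde\ybm}(\tilde\ybm,\mubold;\rhobold)-\pder{\tilde\rbm_\Phibold}{\tilde\ybm}(\tilde\ybm_\Phibold^\star,\mubold;\rhobold)]^T\tilde\zbm$. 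The Lipschitz bounds \ref{AR4}--\ref{AR5} control these differences by $\norm{\tilde\ybm_\Phibold^\star-\tilde\ybm}$, which the primal estimate converts into $\norm{\tilde\rbm_\Phibold(\tilde\ybm,\mubold;\rhobold)}$; finally, invertibility of the Jacobian at $\tilde\ybm_\Phibold^\star$ (the $\tilde\ybm_1=\tilde\ybm_2$ limit of \ref{AR9}) lets me solve for $\tilde\lambdabold_\Phibold^\star-\tilde\zbm$ and absorb everything into constants $\tau,\omega$. The gradient-reconstruction bound follows the same template applied to $\tilde\gbm_\Phibold^\lambda=\pder{\tilde{j}_\Phibold}{\mubold}-\tilde\zbm^T\pder{\tilde\rbm_\Phibold}{\mubold}$, splitting the difference into a parameter-derivative term handled by \ref{AR8}, a term measuring the change in $\partial_\mubold\tilde\rbm_\Phibold$ handled by \ref{AR7}, and a term linear in $\tilde\lambdabold_\Phibold^\star-\tilde\zbm$ handled by the adjoint estimate together with boundedness of $\partial_\mubold\tilde\rbm_\Phibold$.

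The hard part will be bookkeeping rather than ideas: I must verify that every constant produced above can be chosen \emph{independently} of $\tilde\ybm,\tilde\zbm,\mubold$ and, crucially, of $\rhobold\in\Rcal$, so that the bounds hold with a single tuple $(\kappa,\tau,\omega,\kappa',\tau',\omega')$. This hinges on the uniform-in-$\rhobold$ reading of Assumption~\ref{assum:eqp} --- in particular a uniform inverse bound for $\tilde\Dbm_\Phibold$ and uniform Lipschitz constants --- over the bounded sets $\Ycal,\Zcal,\Dcal$. I would make this uniformity explicit before concatenating the estimates, since it is precisely what promotes the per-$\rhobold$ application of \cite{zahr_efficient_2019} to the stated lemma.
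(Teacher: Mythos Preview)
Your proposal is correct and is precisely the argument the paper intends: the paper's ``proof'' is nothing more than a pointer to Propositions~A.1 and A.2 of \cite{zahr_efficient_2019}, and what you have sketched is exactly the content of those propositions transported to the hyperreduced setting via Assumption~\ref{assum:eqp}. Your identification of uniformity in $\rhobold\in\Rcal$ as the one genuine bookkeeping concern is also apt, since the lemma as stated requires constants independent of $\rhobold$ and this must be read into the ``for any $\rhobold\in\Rcal$'' quantifier in Assumption~\ref{assum:eqp}.
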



\section{Proof of residual-based output error estimates}
\label{sec:appendix_B}
We prove the Theorem~\ref{the:qoi_grad_errbnd} and Corollaries~\ref{cor:qoi_errbnd}-\ref{cor:qoi_grad_errbnd}. For notational brevity, we suppress the subscript $\Phibold$ and EQP weights $\rhobold$ for all functions, and we drop off the input argument of the solution terms $A^\star(\mubold)$.

\begin{proof}[Proof of Theorem~\ref{the:qoi_grad_errbnd}]
First, we expand the quantities of interest using their definitions in terms of $j$ and $\tj$ as
\begin{equation}
 \abs{f(\mubold) - \tilde{f}(\mubold)} = \abs{j(\ubm^\star, \mubold)-\tj(\tybm^\star,\mubold)}.
\end{equation}
Then, we add and subtract two terms, $\hj(\hybm, \mubold)$ and $\tj(\hybm, \mubold)$, and use the triangle inequality to obtain
\begin{equation}
  \abs{f(\mubold) - \tilde{f}(\mubold)} 
  \leq 
  \abs{j(\ubm^\star, \mubold)-\hj(\hybm^\star, \mubold)} 
  + \abs{\tj(\hybm^\star,\mubold)-\tj(\tybm^\star,\mubold)} 
  + \abs{\hj(\hybm^\star, \mubold)-\tj(\hybm^\star, \mubold)}.
\end{equation}
From Lemma~\ref{lem:hdm_diff} and \ref{lem:eqp_diff}, the first two terms can be written in terms of the corresponding residuals
\begin{equation}
  \abs{f(\mubold) - \tilde{f}(\mubold)}
  \leq
  c_1 \norm{\rbm(\Phibold \hybm^\star, \mubold)}
  + c_2 \norm{\trbm(\hybm^\star, \mubold)}
  + \abs{\hj(\hybm^\star, \mubold)-\tj(\hybm^\star, \mubold)},
\end{equation}
where $c_1, c_2 > 0$ are constants, which is the desired result in (\ref{eqn:eqp:qoi_errbnd}).

Similarly, we expand the gradient error in terms of the $\gbm^\lambda$ and $\tilde\gbm^\lambda$ operators,
add and subtract $\hglam(\hlam^\star, \hybm^\star, \mubold)$ and $\tglam(\hlam^\star, \hybm^\star, \mubold)$,
and use the triangle inequality to obtain
\begin{equation}
\begin{aligned}
\norm{\nabla f(\mubold) - \nabla \tilde{f}(\mubold)}
\leq
&\norm{
    \glam(\lambdabold^\star, \ubm^\star, \mubold)-\hglam(\hlam^\star, \hybm^\star, \mubold)
} +
\norm{
  \tglam(\hlam^\star, \hybm^\star, \mubold)-\tglam(\tlam^\star, \tybm^\star, \mubold)
} +\\
&\norm{
    \hglam(\hlam^\star, \hybm^\star, \mubold)-\tglam(\hlam^\star, \hybm^\star, \mubold)
}
\end{aligned}
\end{equation}
From Lemma~\ref{lem:hdm_diff} and \ref{lem:eqp_diff}, the first two terms can be written in terms of the corresponding primal residuals and adjoint residuals as
\begin{equation}
  \begin{aligned}
   \norm{\nabla f(\mubold) - \nabla\tf(\mubold)} \leq
    & c_1'\norm{\rbm(\Phibold \hybm^\star,\mubold)} + c_2' \norm{\rlam(\Phibold\hlam^\star,\Phibold\hybm^\star,\mubold)} +\\
    & c_3'\norm{\trbm(\hybm^\star,\mubold)}+c_4'\norm{\trlam(\hlam^\star,\hybm^\star,\mubold)}+\\
    & \norm{\hglam(\hlam^\star,\hybm^\star,\mubold)-\tglam(\hlam^\star,\hybm^\star,\mubold)}
  \end{aligned}
\end{equation}
where $c_1', c_2', c_3', c_4' > 0$ are constants, which is the desired result in (\ref{eqn:eqp:qoi_grad_errbnd}).
\end{proof}

\begin{proof}[Proof of Corollary~\ref{cor:qoi_errbnd}]
Because $\ubm^\star \in \mathrm{Ran}~\Phibold_k$, the ROM will recover the exact solution ($\ubm^\star = \Phibold\hat\ybm^\star$)
and therefore the first term in (\ref{eqn:eqp:qoi_errbnd}) will vanish, which gives
\begin{equation}
    \abs{f(\mubold) - \tilde{f}(\mubold)}
      \leq c_2 \norm{\trbm(\hybm^\star, \mubold)} + \norm{\hj(\hybm^\star, \mubold)-\tj(\hybm^\star, \mubold)}.
\end{equation}
Furthermore, because $\rhobold$ is the solution of (\ref{eqn:eqp:linprog}) with
$\Ccal_{\Phibold,\Xibold,\deltabold}\subset\Ccal_{\Phibold,\Xibold,\delta_\mathtt{rp}}^\mathtt{rp}\cap\Ccal_{\Phibold,\Xibold,\delta_\mathtt{q}}^\mathtt{q}$, we have
\begin{equation}
 \norm{\tilde\rbm(\hat\ybm^\star,\mubold)} \leq \delta_\mathtt{rp}, \qquad
 \abs{\hat{j}(\hat\ybm^\star,\mubold)-\tilde{j}(\hat\ybm^\star,\mubold)} \leq \delta_\mathtt{q},
\end{equation}
which follows directly from (\ref{eqn:eqp:rescon})-(\ref{eqn:eqp:rescon_simple}) and leads to the desired result in (\ref{eqn:eqp:qoi_errbnd2}).
\end{proof}

\begin{proof}[Proof of Corollary~\ref{cor:qoi_grad_errbnd}]
Because $\ubm^\star \in \mathrm{Ran}~\Phibold_k$ and $\lambdabold^\star\in\mathrm{Ran}~\Phibold_k$, the ROM will
recover the exact primal and adjoint solutions ($\ubm^\star = \Phibold\hat\ybm^\star$, $\lambdabold^\star = \Phibold\hat\lambdabold^\star$) and therefore the first two terms in (\ref{eqn:eqp:qoi_grad_errbnd}) will vanish, which gives
\begin{equation}
 \norm{\nabla f(\mubold) - \nabla\tilde{f}(\mubold)} \leq
 c_3'\norm{\tilde\rbm(\hat\ybm^\star,\mubold)} + 
 c_4'\norm{\tilde\rbm^\lambda(\hat\lambdabold^\star,\hat\ybm^\star,\mubold)} +
 \norm{\hat\gbm^\lambda(\hat\lambdabold^\star,\hat\ybm^\star,\mubold) - \tilde\gbm^\lambda(\hat\lambdabold^\star,\hat\ybm^\star,\mubold)}.
\end{equation}
Furthermore, because $\rhobold$ is the solution of (\ref{eqn:eqp:linprog}) with
$\Ccal_{\Phibold,\Xibold,\deltabold}\subset\Ccal_{\Phibold,\Xibold,\delta_\mathtt{rp}}^\mathtt{rp}\cap\Ccal_{\Phibold,\Xibold,\delta_\mathtt{ra}}^\mathtt{ra}\cap\Ccal_{\Phibold,\Xibold,\delta_\mathtt{ga}}^\mathtt{ga}$, we have
\begin{equation}
 \norm{\tilde\rbm(\hat\ybm^\star,\mubold)} \leq \delta_\mathtt{rp}, \qquad
 \norm{\tilde\rbm^\lambda(\hat\lambdabold^\star,\hat\ybm^\star,\mubold)} \leq \delta_\mathtt{ra}, \qquad
 \norm{\hat\gbm^\lambda(\hat\lambdabold^\star,\hat\ybm^\star,\mubold)-\tilde\gbm^\lambda(\hat\lambdabold^\star,\hat\ybm^\star,\mubold)} \leq \delta_\mathtt{ga},
\end{equation}
which follows directly from (\ref{eqn:eqp:rescon})-(\ref{eqn:eqp:rescon_simple}) and leads to the desired result in (\ref{eqn:eqp:qoi_grad_errbnd2}).
\end{proof}

\bibliographystyle{plain}
\bibliography{biblio}

\end{document}